\newcommand{\noun}[1]{\textsc{#1}}
\providecommand{\tabularnewline}{\\}
\theoremstyle{plain}
\newtheorem{thm}{\protect\theoremname}[section]
\theoremstyle{plain}
\newtheorem{lem}[thm]{\protect\lemmaname}
\newtheoremstyle{mystyle}
  {}
  {}
  {\itshape}
  {}
  {\bfseries}
  {.}
  { }
  {}
\theoremstyle{mystyle}
\providecommand{\lemmaname}{Lemma}
\providecommand{\theoremname}{Theorem}
\begin{document}
\title{Using Exponential Histograms to Approximate the Quantiles of Heavy-
and Light-Tailed Data}
\author{Philip T.\ Labo\thanks{Author contact: \protect\href{http://mailto:plabo@alumni.stanford.edu}{plabo@alumni.stanford.edu}.}}
\maketitle
\begin{abstract}
Exponential histograms, with bins of the form $\left\{ \left(\rho^{k-1},\rho^{k}\right]\right\} _{k\in\mathbb{Z}}$,
for $\rho>1$, straightforwardly summarize the quantiles of streaming
data sets (\citet{MRL19}). While they guarantee the relative accuracy
of their estimates, they appear to use only $\log n$ values to summarize
$n$ inputs. We study four aspects of exponential histograms---size,
accuracy, occupancy, and largest gap size---when inputs are i.i.d.\ $\mathrm{Exp}\left(\lambda\right)$
or i.i.d.\ $\mathrm{Pareto}\left(\nu,\beta\right)$, taking $\mathrm{Exp}\left(\lambda\right)$
(or, $\mathrm{Pareto}\left(\nu,\beta\right)$) to represent all light-
(or, heavy-) tailed distributions. We show that, in these settings,
size grows like $\log n$ and takes on a Gumbel distribution as $n$
grows large. We bound the missing mass to the right of the histogram
and the mass of its final bin and show that occupancy grows apace
with size. Finally, we approximate the size of the largest number
of consecutive, empty bins. Our study gives a deeper and broader view
of this low-memory approach to quantile estimation.
\end{abstract}

\section{Introduction \label{sec:Introduction}}

Modern organizations collect tons of data, and yet storage is expensive.
A great deal of research has thus gone into the invention of techniques
for distilling the data into summaries. The idea is to selectively
throw away most of the data while keeping enough of it to accurately
approximate the answers to future questions. (\citet{CY20} gives
an overview.) Of the many possible questions and data types, we consider
the approximation of numerical quantiles.\footnote{This is a challenging task. \citet{MP80} shows that computing an
inner order statistic in one pass requires $\mathcal{O}\left(n\right)$
space.}

Say we have multiple data centers, each summarizing the data it receives.
The approach we discuss solves the following problem: 1.\ When a
new numerical input value $X_{n}$ reaches a data center, it should
immediately incorporate it into its summary. 2.\ If we supply $0\leq q\leq1$
to a data center, it should immediately respond with a provably-accurate
approximation of the $q^{\mathrm{th}}$ quantile of $X_{1},X_{2},\ldots,X_{n}$.
3.\ If we supply the system a list of centers, it should quickly
build a summary of the data summarized by those centers. This summary
must guarantee accurate quantile estimation for the data it summarizes;
it need not summarize new data received by the data centers following
its assembly.

An exponential histogram is a pair $\left(\mathbf{B},\rho\right)$,
where $\rho>1$ and
\[
B_{n,k}\coloneqq\sum_{i=1}^{n}\mathbf{1}_{\left\{ \rho^{k-1}<X_{i}\leq\rho^{k}\right\} },\textrm{ for }n\geq1\textrm{ and }k\in\mathbb{Z},
\]
gives the number of data values $X_{1},X_{2},\ldots,X_{n}$ that fall
into bin $k$, $\left(\rho^{k-1},\rho^{k}\right]$.\footnote{$\mathbf{1}_{\left\{ \mathscr{S}\right\} }$ equals one if statement
$\mathscr{S}$ is true; zero otherwise.} In practical settings, one keeps a separate counter for $n$---and
does not index by it. We assume that $\Pr\left(X_{i}>0\right)=1$.
In settings with positive \emph{and} negative $X_{i}$, two exponential
histograms, and a counter if $\Pr\left(X_{i}=0\right)>0$, suffice.
Note that: 1.\ Incorporating new data value $X_{n}$ requires only
$B_{n,k}\leftarrow B_{n-1,k}+1$, for $k=\left\lceil \log_{\rho}X_{n}\right\rceil $.
2.\ See below. 3.\ Combining exponential histograms that use the
same $\rho$ requires only the summing of corresponding entries, the
combined histogram retaining the accuracy of its progenitors.

In considering accuracy, let us assume for a moment that $X_{i}\stackrel{\mathrm{iid}}{\sim}F$.
Then,

\begin{equation}
\left(B_{n,j}\right)_{j\in\mathbb{Z}}\sim\mathrm{Multinomial}\left(n,\,\left(F\left(\rho^{j}\right)-F\left(\rho^{j-1}\right)\right)_{j\in\mathbb{Z}}\right),\label{eq:distBn}
\end{equation}
so that, in particular, \emph{
\begin{equation}
\frac{B_{n,k}}{n}\sim\frac{\mathrm{Binomial}\left(n,\,F\left(\rho^{k}\right)-F\left(\rho^{k-1}\right)\right)}{n}\stackrel[\infty]{n}{\longrightarrow}F\left(\rho^{k}\right)-F\left(\rho^{k-1}\right),\label{eq:distBnk}
\end{equation}
}where convergence occurs with probability one by the strong law of
large numbers. By the same reasoning, we have 
\begin{equation}
\frac{1}{n}\bar{B}_{n,k}\coloneqq\frac{1}{n}\sum_{j=-\infty}^{k}B_{n,j}\sim\frac{\mathrm{Binomial}\left(n,\,F\left(\rho^{k}\right)\right)}{n}\stackrel[\infty]{n}{\longrightarrow}F\left(\rho^{k}\right).\label{eq:distSumBnj}
\end{equation}
That is to say, the accumulation of data leads to the accurate approximation
of probabilities $\left(F\left(\rho^{j}\right)\right)_{j\in\mathbb{Z}}$.
Further, the expected error in approximations (\ref{eq:distBnk})
and (\ref{eq:distSumBnj}) is $\mathcal{O}\left(\nicefrac{1}{\sqrt{n}}\right)$.

While statements (\ref{eq:distBn}) to (\ref{eq:distSumBnj}) are
true of any histogram (modulo replacing $\rho^{k}$ with general $c_{k}$),
one might ask, what makes exponential histograms special? For one
thing, their size: §\ref{sec:Size} shows that exponential histograms
require only $\log n$ space to store a sample of size $n$ from the
light-tailed $\mathrm{Exp}\left(\lambda\right)$ or the heavy-tailed
$\mathrm{Pareto}\left(\nu,\beta\right)$. For another, their accuracy:
The literature on quantile estimation uses two measures of accuracy
(\emph{e.g.}, \citet{GK01,CKLTV21}). For $\hat{X}_{q}$ an estimate
of the $q^{\mathrm{th}}$ quantile and $X_{\left(1\right)}\leq X_{\left(2\right)}\leq\cdots\leq X_{\left(n\right)}$
the order statistics of the $X_{i}$, \emph{absolute} and \emph{relative}
accuracy guarantee that 
\begin{align}
\left|X_{\left(\left\lfloor 1+\left(n-1\right)q\right\rfloor \right)}-\hat{X}_{q}\right| & \leq\epsilon n\textrm{ and}\label{eq:absError}\\
\left|X_{\left(\left\lfloor 1+\left(n-1\right)q\right\rfloor \right)}-\hat{X}_{q}\right| & \leq\epsilon X_{\left(\left\lfloor 1+\left(n-1\right)q\right\rfloor \right)},\label{eq:relError}
\end{align}
for $0<\epsilon<1$, \emph{e.g.}, $\epsilon=0.01$, and $X_{\left(\left\lfloor 1+\left(n-1\right)q\right\rfloor \right)}$
the (lower) $q^{\mathrm{th}}$ quantile. Which is better? In many
cases, $X_{\left(\left\lfloor 1+\left(n-1\right)q\right\rfloor \right)}\ll n$
with high probability; \emph{e.g.}, for $X_{i}$ i.i.d.\ $\mathrm{Exp}\left(1\right)$,
$\mathbb{E}X_{\left(n\right)}\sim\log n$, as $n\rightarrow\infty$.
In others, $X_{\left(\left\lfloor 1+\left(n-1\right)q\right\rfloor \right)}\gg n$
with high probability; \emph{e.g.}, for $X_{i}$ i.i.d.\ $\mathrm{Pareto}\left(1,1\right)$,
$\mathbb{E}X_{\left(n\right)}=\infty$. The distribution of the expected
data plays a role in weighing (\ref{eq:absError}) against (\ref{eq:relError});
usually (\ref{eq:relError}) wins.

Exponential histograms give quantile estimates satisfying relative
accuracy guarantee (\ref{eq:relError}) (\citet{MRL19}). To see this
note that:

\begin{restatable}{myfact}{minmaxinterval}

\label{fact:MinMaxInterval}If $0<a<b<\infty$, then $\arg\min_{\theta\in\left[a,b\right]}\max_{x\in\left[a,b\right]}\frac{\left|x-\theta\right|}{x}=\frac{2ab}{a+b}$,
which implies that $\min_{\theta\in\left[a,b\right]}\max_{x\in\left[a,b\right]}\frac{\left|x-\theta\right|}{x}=\frac{b-a}{a+b}$.

\end{restatable}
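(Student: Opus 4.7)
The plan is to reduce the inner maximization to just two endpoint candidates and then minimize over $\theta$ by balancing them. First I would fix $\theta \in [a,b]$ and study the function $g_\theta(x) \coloneqq |x-\theta|/x$ on $[a,b]$. Writing
\[
g_\theta(x) = \begin{cases} \theta/x - 1, & a \leq x \leq \theta, \\ 1 - \theta/x, & \theta \leq x \leq b, \end{cases}
\]
one sees that $g_\theta$ is decreasing on $[a,\theta]$ and increasing on $[\theta,b]$, so its maximum over $[a,b]$ is attained at one of the endpoints. Thus
\[
\max_{x \in [a,b]} g_\theta(x) = \max\!\left\{\frac{\theta - a}{a},\ \frac{b - \theta}{b}\right\}.
\]

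Second, I would minimize this outer maximum in $\theta \in [a,b]$. The first argument $\theta/a - 1$ is strictly increasing in $\theta$ (from $0$ at $\theta = a$ to $(b-a)/a$ at $\theta = b$), while the second $1 - \theta/b$ is strictly decreasing (from $(b-a)/b$ at $\theta = a$ to $0$ at $\theta = b$). Since they take opposite monotonicities and cross inside $[a,b]$, the minimizer occurs exactly where they are equal. Solving
\[
\frac{\theta}{a} - 1 = 1 - \frac{\theta}{b} \quad\Longleftrightarrow\quad \theta\left(\frac{1}{a} + \frac{1}{b}\right) = 2
\]
yields $\theta^\star = 2ab/(a+b)$, the harmonic mean of $a$ and $b$. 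Substituting back gives the common value
\[
\frac{\theta^\star}{a} - 1 = \frac{2b}{a+b} - 1 = \frac{b-a}{a+b},
\]
which is the claimed minimax value.

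I do not anticipate a genuine obstacle here: the argument is elementary once one recognizes that $g_\theta$ is unimodal in $x$ (decreasing then increasing about the kink at $\theta$) so that only the endpoints $a,b$ can achieve the inner max. The only mild subtlety is verifying that $\theta^\star$ lies in $[a,b]$, which follows immediately from the AM-HM inequality $a \leq 2ab/(a+b) \leq b$ whenever $0 < a \leq b$. I would conclude by noting that strict monotonicity of the two candidate maxima in $\theta$ implies uniqueness of the minimizer, so $\arg\min$ is well-defined and equals $2ab/(a+b)$.
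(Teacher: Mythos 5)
Your proof is correct and follows essentially the same route as the paper's: reduce the inner maximum to the two endpoints $x=a$ and $x=b$ using the piecewise monotonicity of $\left|x-\theta\right|/x$, then balance $\nicefrac{\theta}{a}-1$ against $1-\nicefrac{\theta}{b}$ to get $\theta^{\star}=\nicefrac{2ab}{\left(a+b\right)}$ and the value $\nicefrac{\left(b-a\right)}{\left(a+b\right)}$. The only additions beyond the paper's argument are the explicit AM--HM check that $\theta^{\star}\in\left[a,b\right]$ and the uniqueness remark, both of which are fine.
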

\begin{proof}
This is \citet{HS20} Proposition 3.16. For $a\leq\theta\leq b$,
\[
\max_{x\in\left[a,b\right]}\frac{\left|x-\theta\right|}{x}=\max\left\{ \frac{\left|a-\theta\right|}{a},\frac{\left|b-\theta\right|}{b}\right\} =\max\left\{ \frac{\theta}{a}-1,1-\frac{\theta}{b}\right\} 
\]
since $1-\nicefrac{\theta}{x}$ grows in $x$. Because $\nicefrac{\theta}{a}-1$
increases in $\theta$ from $0$ while $1-\nicefrac{\theta}{b}$ decreases
in $\theta$ to $0$, the above maxima is smallest when $\nicefrac{\theta}{a}-1=1-\nicefrac{\theta}{b}$;
\emph{i.e.}, the minimizing $\theta$ is $\nicefrac{2ab}{\left(a+b\right)}$,
which gives $\min_{\theta\in\left[a,b\right]}\max_{x\in\left[a,b\right]}\nicefrac{\left|x-\theta\right|}{x}=\nicefrac{\left(b-a\right)}{\left(a+b\right)}$.
\end{proof}
Applying Fact \ref{fact:MinMaxInterval}, we fix $0<\epsilon<1$ and
let $\rho\coloneqq\nicefrac{\left(1+\epsilon\right)}{\left(1-\epsilon\right)}$.\footnote{In practice, one picks $0<\epsilon<1$ first and then sets $\rho\coloneqq\nicefrac{\left(1+\epsilon\right)}{\left(1-\epsilon\right)}$.
We use ``$\rho$'' for ratio.} Then, $\forall j\in\mathbb{Z}$,
\[
\min_{\theta\in\left[\rho^{j-1},\rho^{j}\right]}\max_{x\in\left[\rho^{j-1},\rho^{j}\right]}\frac{\left|x-\theta\right|}{x}=\frac{\rho^{j}-\rho^{j-1}}{\rho^{j-1}+\rho^{j}}=\frac{\rho-1}{\rho+1}=\epsilon.
\]
Furthermore, for $0\leq q\leq1$, find $k_{q}\in\mathbb{Z}$ such
that $X_{\left(\left\lfloor 1+\left(n-1\right)q\right\rfloor \right)}\in\left(\rho^{k_{q}-1},\rho^{k_{q}}\right]$.
Then, $\hat{X}_{q}\coloneqq\frac{2\rho^{k_{q}-1}\rho^{k_{q}}}{\rho^{k_{q}-1}+\rho^{k_{q}}}=\frac{2\rho^{k_{q}}}{\rho+1}$
approximates $X_{\left(\left\lfloor 1+\left(n-1\right)q\right\rfloor \right)}$
and satisfies (\ref{eq:relError}).

\subsection{Heavy- and Light-Tailed Data Distributions \label{subsec:Heavy-and-Light-Tailed}}

\begin{table}
\hfill{}%
\begin{tabular}{c|ccc}
\noun{Name} & $\mathrm{Exp}\left(\lambda\right)$ & $\mathrm{Pareto}\left(\nu,\beta\right)$ & $\mathrm{Gumbel}\left(\mu,\sigma\right)$\tabularnewline
\hline 
\noun{Support} & $\left(0,\infty\right)$ & $\left(\nu,\infty\right)$ & $\mathbb{R}$\tabularnewline
\noun{Location} & --- & $\nu>0$ & $\mu\in\mathbb{R}$\tabularnewline
\noun{Rate/Scale} & $\lambda>0$ & $\beta>0$ & $\sigma>0$\tabularnewline
$f_{X}\left(x\right)$ & $\lambda e^{-\lambda x}$ & $\left(\nicefrac{\beta}{x}\right)\left(\nicefrac{\nu}{x}\right)^{\beta}$ & $\frac{1}{\sigma}e^{-\left(\frac{x-\mu}{\sigma}+e^{-\frac{x-\mu}{\sigma}}\right)}$\tabularnewline
$F_{X}\left(x\right)$ & $1-e^{-\lambda x}$ & $1-\left(\nicefrac{\nu}{x}\right)^{\beta}$ & $e^{-e^{-\left.\left(x-\mu\right)\right/\sigma}}$\tabularnewline
$\mathbb{E}X$ & $\nicefrac{1}{\lambda}$ & $\beta>1$: $\frac{\nu\beta}{\beta-1}$ & $\mu+\gamma\sigma$\tabularnewline
$\mathrm{Var}\left(X\right)$ & $\nicefrac{1}{\lambda^{2}}$ & $\beta>2$: $\frac{\nu^{2}\beta}{\left(\beta-1\right)^{2}\left(\beta-2\right)}$ & $\frac{\pi^{2}\sigma^{2}}{6}$\tabularnewline
$\mathrm{Skew}\left(X\right)$ & 2 & $\beta>3$: $\frac{2\left(1+\beta\right)}{\beta-3}\sqrt{\frac{\beta-2}{\beta}}$ & $\nicefrac{12\sqrt{6}\zeta\left(3\right)}{\pi^{3}}$\tabularnewline
$\mathbb{E}e^{tX}$ & $t<\lambda$: $\frac{\lambda}{\lambda-t}$ & $\infty$ & $\Gamma\left(1-\sigma t\right)e^{\mu t}$\tabularnewline
\end{tabular}\hfill{}

\caption{Probability distributions of interest. For $X\sim\mathrm{Pareto}\left(1,\beta\right)$,
$\mathbb{E}X=\infty$ when $\beta\protect\leq1$, $\mathrm{Var}\left(X\right)=\infty$
when $\beta\protect\leq2$, and $\mathrm{Skew}\left(X\right)=\infty$
when $\beta\protect\leq3$. The $\gamma$ and $\zeta\left(3\right)$
in the Gumbel mean and skewness are Euler and Apéry's constants.}

\label{tab:distributions}
\end{table}

This paper studies how data of different distributions populate the
bins of an exponential histogram. While we focus on the $\mathrm{Exp}\left(\lambda\right)$
and $\mathrm{Pareto}\left(\nu,\beta\right)$ settings, the implications
of our analysis go beyond these constraints. For example, if $X\sim\mathrm{Pareto}\left(\nu,\beta\right)$,
then $f_{X^{-1}}\left(x\right)=\beta\nu^{\beta}x^{\beta-1}$ on $\left(0,\nicefrac{1}{\nu}\right)$.
Putting $\nu=\beta=1$ gives $\nicefrac{1}{X}\sim\mathrm{Uniform}\left(0,1\right)$.
Although we do not focus on $\mathrm{Uniform}\left(0,1\right)$, many
of the results for $\mathrm{Uniform}\left(0,1\right)$, in particular
size, match those for $\mathrm{Pareto}\left(1,1\right)$.

Following \citet{FKZ13} we call a distribution:
\begin{eqnarray}
\textrm{heavy-tailed}\iff & \forall t>0, & \int_{-\infty}^{\infty}e^{tx}f\left(x\right)dx=\infty\label{eq:heavy}\\
\textrm{light-tailed}\iff & \exists t>0,\textrm{ such that } & \int_{-\infty}^{\infty}e^{tx}f\left(x\right)dx<\infty,\label{eq:light}
\end{eqnarray}
where we assume the existence of density function $f$. Put another
way, we call $X$ heavy-tailed (light-tailed) if its moment generating
function $\mathbb{E}e^{tX}$ is infinite for all $t>0$ (is finite
for some $t>0$). We take exemplars $\mathrm{Exp}\left(\lambda\right)$
and $\mathrm{Pareto}\left(\nu,\beta\right)$ to stand in for all light-
and heavy-tailed distributions (Table \ref{tab:distributions}). As
Theorem 2.6 of \citet{FKZ13} points out, a distribution $F$ is heavy-tailed
if and only if $\limsup_{x\rightarrow\infty}\left(1-F\left(x\right)\right)e^{tx}=\infty$,
for all $t>0$; \emph{i.e.}, ``heavy-tailed'' is a tail property.

That said, from the histogram's perspective, heavy-tailed $\mathrm{Pareto}\left(1,1\right)$
looks exactly like compactly-supported $\mathrm{Uniform}\left(0,1\right)$.
In both settings $F_{\mathrm{Size}}^{-1}\left(q\right)\sim\log_{\rho}n-\log_{\rho}\left(-\log q\right)$
(see Theorem \ref{thm:ParetoCDF}). Why is this? Two things: 1.\ Larger
and larger bins to the right soften $\mathrm{Pareto}\left(1,1\right)$'s
creation of extreme outliers. 2.\ The exponential histogram has countably
many smaller and smaller bins to the left. With $\mathbb{E}U_{\left(1\right)}=\nicefrac{1}{\left(n+1\right)}=1-\mathbb{E}U_{\left(n\right)}$,
we expect $U_{i}$ i.i.d.\ $\mathrm{Uniform}\left(0,1\right)$ to
occupy bins $\left\{ -\left\lfloor \log_{\rho}\left(n+1\right)\right\rfloor ,\ldots,-1,0\right\} $.
Exponential histograms are blessed with (cursed with) larger and larger
(smaller and smaller) bins to the right (to the left).

As the above correctly suggests, the study of exponential histograms
touches on extreme value theory (\citet{R87,HF06}). Standardized
sizes of exponential histograms holding i.i.d.\ $\mathrm{Exp}\left(\lambda\right)$
or i.i.d.\ $\mathrm{Pareto}\left(\nu,\beta\right)$ data belong to
the Gumbel domain of attraction (Propositions \ref{prop:asympExpM}
and \ref{prop:ParetoLim-M}), whereas $\mathrm{Exp}\left(\lambda\right)$
and $\mathrm{Pareto}\left(\nu,\beta\right)$ themselves belong to
the Gumbel and Fréchet domains of attraction (see page 83 of \citet{D05}).
The maxima of $n$ i.i.d.\ $\mathrm{Pareto}\left(\nu,\beta\right)$
variables has a heavy tail, which the logarithm in the size attenuates.
Finally,

\begin{restatable}{myfact}{lingumbel}

\label{fact:linGumbel}Fixing $\mu,b\in\mathbb{R}$ and $\sigma,a>0$,
we note that, if $X\sim\mathrm{Gumbel}\left(\mu,\sigma\right)$, then
$aX+b\sim\mathrm{Gumbel}\left(a\mu+b,a\sigma\right)$.

\end{restatable}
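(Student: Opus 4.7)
The plan is to verify the claim via the CDF method, using the Gumbel CDF recorded in Table \ref{tab:distributions}. Since $a>0$, the map $x\mapsto ax+b$ is strictly increasing, so for $Y\coloneqq aX+b$ and any $y\in\mathbb{R}$,
\[
\Pr(Y\leq y)=\Pr\bigl(X\leq (y-b)/a\bigr)=F_{X}\bigl((y-b)/a\bigr).
\]
I would then substitute $x=(y-b)/a$ into $F_{X}(x)=e^{-e^{-(x-\mu)/\sigma}}$ and simplify the inner exponent using
\[
\frac{(y-b)/a-\mu}{\sigma}=\frac{y-(a\mu+b)}{a\sigma},
\]
so that $F_{Y}(y)=e^{-e^{-(y-(a\mu+b))/(a\sigma)}}$, which is precisely the CDF of $\mathrm{Gumbel}(a\mu+b,a\sigma)$.

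There is no real obstacle here; the argument is a single algebraic identity. The only thing to watch is that $a>0$, which is assumed, so the inverse transformation is monotone increasing and no sign flip or density Jacobian needs tracking. If one preferred, the same conclusion drops out of the MGF row of Table \ref{tab:distributions}: $\mathbb{E}e^{t(aX+b)}=e^{tb}\,\Gamma(1-\sigma(at))e^{\mu(at)}=\Gamma(1-(a\sigma)t)e^{(a\mu+b)t}$, matching the MGF of $\mathrm{Gumbel}(a\mu+b,a\sigma)$, and uniqueness of MGFs (in a neighborhood of zero) finishes the job.
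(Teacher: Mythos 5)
Your proof is correct and follows essentially the same route as the paper's: both compute $\Pr(aX+b\leq y)=\Pr\left(X\leq (y-b)/a\right)$ using monotonicity of the affine map and simplify the Gumbel CDF exponent to $-\left(y-(a\mu+b)\right)/(a\sigma)$. The extra MGF remark is a fine alternative but not needed.
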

\begin{proof}
Fixing $y\in\mathbb{R}$ and noting that $X\sim\mathrm{Gumbel}\left(\mu,\sigma\right)$,
we have
\[
\Pr\left(aX+b\leq y\right)=\Pr\left(X\leq\frac{y-b}{a}\right)=\exp\left(-\exp\left(-\frac{y-\left(a\mu+b\right)}{a\sigma}\right)\right),
\]
which implies that $aX+b\sim\mathrm{Gumbel}\left(a\mu+b,a\sigma\right)$. 
\end{proof}

\subsection{Our Contributions \label{subsec:Our-Contributions}}

\begin{figure}[!t]
\subfloat[$\mathrm{Exp}\left(1\right)$ data with expected bin counts $\mathbb{E}B_{1000,j}=1000\left\{ e^{-\rho^{j-1}}-e^{-\rho^{j}}\right\} $.]{\hfill{}\includegraphics[scale=0.62]{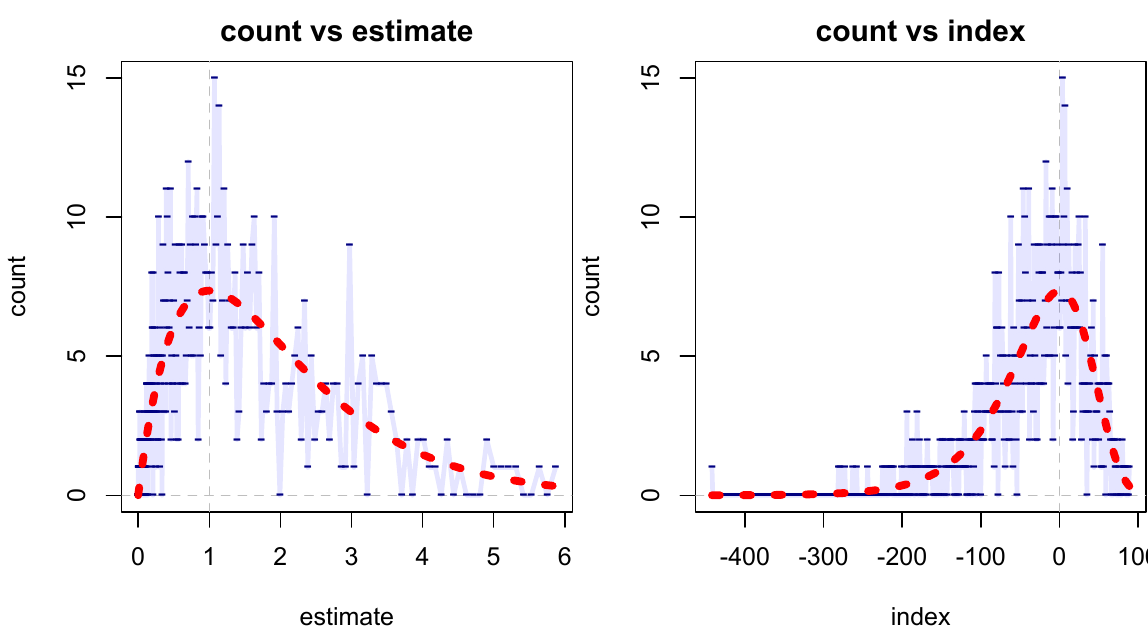}\hfill{}

}

\subfloat[$\mathrm{Pareto}\left(1,1\right)$ data with expected bin counts $\mathbb{E}B_{1000,j}=1000\left\{ \rho^{-j+1}-\rho^{-j}\right\} $.]{\hfill{}\includegraphics[scale=0.62]{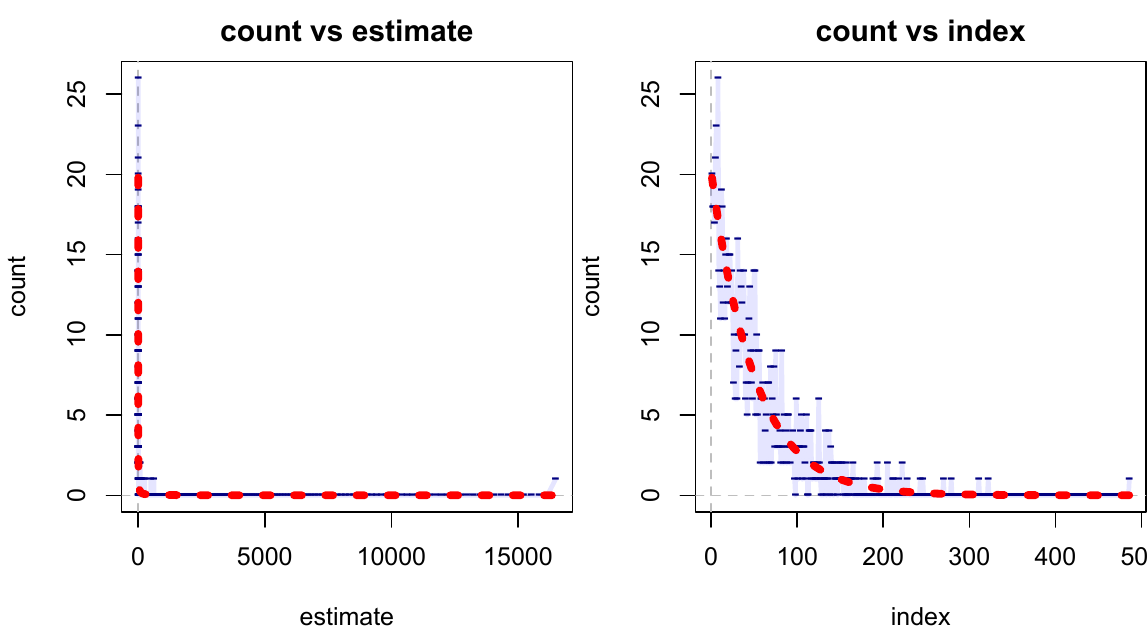}\hfill{}

}

\caption{Exponential histograms with $\rho=\frac{1.01}{0.99}\approx1.02$.
Panels on the left (on the right) show $B_{1000,j}$ versus $\nicefrac{2\rho^{j}}{\left(\rho+1\right)}$
(versus $j$). Red curves give $\mathbb{E}B_{1000,j}$.}

\label{fig:ExpHist}
\end{figure}

Recent work on estimating quantiles on data streams includes \citet{SBAS04,CJLVW06,T08,ACHPWY13,KLL16,MRL19,DE19,CKLTV21}.
\citet{ACHPWY13,KLL16} develop \emph{compactors} (sequences of length-$2m$
buffers that, using coin flips, pass their even or odd order statistics
to the next buffer) for absolute-error-guaranteed quantile estimation,
as in (\ref{eq:absError}). \citet{CKLTV21} develop this further,
presenting compactor-based sketches for relative-error-guaranteed
quantile estimation, as in (\ref{eq:relError}). While the \citet{CKLTV21}
approach requires space 
\begin{equation}
\mathcal{O}\left(\frac{\log^{\nicefrac{3}{2}}\left(\epsilon n\right)}{\epsilon}\sqrt{\log\left(\frac{\log\left(\epsilon n\right)}{\delta\epsilon}\right)}\right)\label{eq:ReqSketchSpace}
\end{equation}
to summarize $n$ data points, with error probability bounded by $0<\delta<1$
and $0<\epsilon<1$ as in (\ref{eq:relError}), the exponential histogram-based
approach of \citet{MRL19} has $F_{\mathrm{Size}}^{-1}\left(q\right)\sim\log_{\rho}n-\log_{\rho}\left(-\log q\right)$
under $\mathrm{Exp}\left(\lambda\right)$ or $\mathrm{Pareto}\left(\nu,1\right)$
sampling (see Theorems \ref{thm:distExpM} and \ref{thm:ParetoCDF})
and is computationally-simpler, and therefore less error-prone. While
\citet{MRL19} shows that exponential histograms give relatively accurate
estimates (see Fact \ref{fact:MinMaxInterval}), their size result
only apply in the light-tailed $\mathrm{Exp}\left(\lambda\right)$
setting---not in the heavy-tailed $\mathrm{Pareto}\left(\nu,\beta\right)$
setting.\footnote{$\mathrm{Pareto}\left(\nu,\beta\right)$, with $\mathbb{E}e^{tX}=\infty$
for all $t>0$, is not sub-exponential, \emph{i.e.}, does not fulfill
(\ref{eq:light}).}

\begin{table}
\makebox[\textwidth][c]{

\begin{tabular}{lll}
\noun{Symbol} & \noun{Definition} & \noun{Additional Notes}\tabularnewline
\hline 
$\gamma=0.57721\ldots$ & $\lim_{n\rightarrow\infty}\left(\sum_{k=1}^{n}\nicefrac{1}{k}-\log n\right)$ & Euler's constant\tabularnewline
$\zeta\left(3\right)=1.20205\ldots$ & $\sum_{k=1}^{\infty}\nicefrac{1}{k^{3}}$ & Apéry's constant\tabularnewline
$a_{n}\sim b_{n}$ & $\lim_{n\rightarrow\infty}\nicefrac{a_{n}}{b_{n}}=1$ & Asymptotically equivalent\tabularnewline
$\Gamma\left(x\right)$, $x>0$ & $\int_{0}^{\infty}t^{x-1}\exp\left(-t\right)dt$ & Gamma function\tabularnewline
$B\left(x,y\right)$, $x,y>0$ & $\int_{0}^{1}t^{x-1}\left(1-t\right)^{y-1}dt=\frac{\Gamma\left(x\right)\Gamma\left(y\right)}{\Gamma\left(x+y\right)}$ & Beta function\tabularnewline
$\psi\left(x\right)$, $x>0$ & $\frac{d}{dx}\log\Gamma\left(x\right)\sim\log x-\frac{1}{2x}$ & Digamma function\tabularnewline
$\psi_{m}\left(x\right)$, $x>0$ & $\frac{d^{m}}{dx^{m}}\psi\left(x\right)=\frac{d^{m+1}}{dx^{m+1}}\log\Gamma\left(x\right)$ & Polygamma function\tabularnewline
$\mathrm{Li}_{2}\left(x\right)$, $x>0$ & $\int_{1}^{x}\frac{\log t}{1-t}dt$ & Dilogarithm function\tabularnewline
$\sinh^{-1}\left(x\right)$, $x\in\mathbb{R}$ & $\log\left(x+\sqrt{x^{2}+1}\right)$ & Inverse hyperbolic sine\tabularnewline
$W_{0}\left(x\right)$, $x\geq0$ & $w$ such that $we^{w}=x$ & Lambert $W$ principal branch\tabularnewline
$W_{-1}\left(x\right)$, $x\in\left[\nicefrac{-1}{e},0\right)$ & $w$ such that $we^{w}=x$ & Lambert $W$ $-1$ branch\tabularnewline
$\mathscr{L}\left(X\right)$ & The distribution of $X$ & $\mathscr{L}$ stands for ``law''\tabularnewline
$X\sim F$ & $\mathscr{L}\left(X\right)=F$ & $X$ has distribution $F$\tabularnewline
$X\stackrel{\cdot}{\sim}F$ & $\mathscr{L}\left(X\right)\approx F$ & Approximate distribution\tabularnewline
$A\stackrel{\mathscr{L}}{=}B$ & $A$ and $B$ have same distribution & Equality in distribution\tabularnewline
$\mathrm{Skew}\left(X\right)$ & $\mathbb{E}\left[\left\{ \nicefrac{\left(X-\mathbb{E}X\right)}{\sqrt{\mathrm{Var}\left(X\right)}}\right\} ^{3}\right]$ & Skewness (asymmetry) of $X$\tabularnewline
$A_{n}\implies B$ & $\Pr\left(A_{n}\leq x\right)\rightarrow\Pr\left(B\leq x\right),\forall x$ & Convergence in distribution\tabularnewline
\end{tabular}

}

\caption{Conventions. Our definition of convergence in distribution assumes
continuous $F\left(x\right)\protect\coloneqq\Pr\left(B\protect\leq x\right)$.
When this does not hold, we have convergence in distribution if $\Pr\left(A_{n}\protect\leq x\right)\rightarrow\Pr\left(B\protect\leq x\right)$
for all continuity points $x$ of $F\left(x\right)$.}

\label{tab:conventions}
\end{table}

Exponential histograms are a simple, low-memory method for relative-error-guaranteed
quantile estimation. We corroborate and extend \citet{MRL19}. Each
of sections \ref{sec:Size}--\ref{sec:Longest-Gap} spends part of
its time in the $\mathrm{Exp}\left(\lambda\right)$ setting and part
of its time in the $\mathrm{Pareto}\left(\nu,\beta\right)$ setting
(Figure \ref{fig:ExpHist}). Section \ref{sec:Size} gives quantile
functions for histogram size and shows that size is approximately
Gumbel-distributed. Section \ref{sec:Accuracy} quantifies upper edge
precision, a concern in industrial settings. Section \ref{sec:Occupancy}
shows that the number of occupied bins grows apace with size, and
section \ref{sec:Longest-Gap} shows that the length of the largest
block of empty bins is a small fraction of the total number of empty
bins. Table \ref{tab:conventions} presents notation, and section
\ref{sec:Conclusions} concludes.

\section{Size \label{sec:Size}}

When we write \emph{size} or \emph{histogram size}, we mean $\left\lceil \log_{\rho}X_{\left(n\right)}\right\rceil -\left\lceil \log_{\rho}X_{\left(1\right)}\right\rceil +1$
\[
=\left|\left\{ \left\lceil \log_{\rho}X_{\left(1\right)}\right\rceil ,\left\lceil \log_{\rho}X_{\left(1\right)}\right\rceil +1,\ldots,\left\lceil \log_{\rho}X_{\left(n\right)}\right\rceil \right\} \right|,
\]
that is, the number of consecutive bins when we count from the one
containing $X_{\left(1\right)}$ to the one containing $X_{\left(n\right)}$.
We assume that $n\geq2$. For ease of computation we drop the ceiling
functions and focus on 
\begin{equation}
M_{n}\coloneqq\frac{\log X_{\left(n\right)}-\log X_{\left(1\right)}}{\log\rho}+1,\qquad A_{n}\coloneqq\frac{\log X_{\left(n\right)}-\log\nu}{\log\rho}+1,\label{eq:MnAn}
\end{equation}
so that w.p.1 $\mathrm{size}\in\left\{ \left\lfloor M_{n}\right\rfloor ,\left\lfloor M_{n}\right\rfloor +1\right\} $.
As we study how size grows with $n$, we simply take size to be $M_{n}$.
In the $\mathrm{Pareto}\left(\nu,\beta\right)$ setting, if one feels
certain that $X_{\left(1\right)}$ occupies the left-most bin, or,
if one wishes to start counting from $\left\lceil \log_{\rho}\nu\right\rceil $,
then $A_{n}$ measures size. When using exponential histograms, one
might wish to amortize memory allocation as in §17.1 of \citet{CLRS01}.

\subsection{The Exponential Setting}

This section characterizes $\mathscr{L}\left(M_{n}\right)$ when $X_{1},X_{2},\ldots,X_{n}\stackrel{\mathrm{iid}}{\sim}\mathrm{Exp}\left(\lambda\right)$.
Note that:

\begin{restatable}{theorem}{distexpm}

\label{thm:distExpM}For $n\geq2$, $X_{1},X_{2},\ldots,X_{n}\stackrel{\mathrm{iid}}{\sim}\mathrm{Exp}\left(\lambda\right)$,
and $M_{n}$ in (\ref{eq:MnAn}), we have
\begin{align}
F_{M_{n}}\left(\mu\right) & =\left(n-1\right)B\left(\frac{\rho^{\mu-1}+n-1}{\rho^{\mu-1}-1},n-1\right)\label{eq:ExpFM}\\
f_{M_{n}}\left(\mu\right) & =\frac{n\rho^{\mu-1}F_{M_{n}}\left(\mu\right)\log\rho}{\left(\rho^{\mu-1}-1\right)^{2}}\left\{ \psi\left(\frac{n\rho^{\mu-1}}{\rho^{\mu-1}-1}\right)-\psi\left(\frac{\rho^{\mu-1}+n-1}{\rho^{\mu-1}-1}\right)\right\} \label{eq:ExpfM}\\
F_{M_{n}}^{-1}\left(q\right) & \sim\log_{\rho}\left(\frac{n\log\left(n-1\right)+\log\left(\nicefrac{1}{q}\right)}{\log\left(\nicefrac{1}{q}\right)}\right)+1\sim\log_{\rho}n-\log_{\rho}\left(-\log q\right),\label{eq:ExpFMinv}
\end{align}
for $\mu>1$ and $0<q<1$. The asymptotic results in (\ref{eq:ExpFMinv})
hold as either or both $n\rightarrow\infty$ and $q\rightarrow1^{-}$.
The first asymptotic result in (\ref{eq:ExpFMinv}) also holds as
$q\rightarrow0^{+}$ while $n$ remains fixed.

\end{restatable}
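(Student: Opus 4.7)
The three claims form a natural pipeline: derive \eqref{eq:ExpFM} from the joint law of $(X_{(1)},X_{(n)})$, read off \eqref{eq:ExpfM} by differentiation, and obtain \eqref{eq:ExpFMinv} by writing the CDF as a finite product and analyzing the equation $-\log F_{M_n}(\mu)=\log(1/q)$.

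To start, I would note that $M_n\leq\mu\iff X_{(n)}\leq\rho^{\mu-1}X_{(1)}$. Writing $r\coloneqq\rho^{\mu-1}$ and conditioning on $X_{(1)}=x$, the memoryless property of $\mathrm{Exp}(\lambda)$ tells us that $(X_{(2)}-x,\ldots,X_{(n)}-x)$ is distributed like the order statistics of $n-1$ i.i.d.\ $\mathrm{Exp}(\lambda)$ variables, so $\Pr(X_{(n)}\leq rx\mid X_{(1)}=x)=(1-e^{-\lambda(r-1)x})^{n-1}$. Since $X_{(1)}\sim\mathrm{Exp}(n\lambda)$,
\[
F_{M_n}(\mu)=n\lambda\int_{0}^{\infty}(1-e^{-\lambda(r-1)x})^{n-1}e^{-n\lambda x}\,dx.
\]
The substitution $u=e^{-\lambda x}$ followed by $v=u^{r-1}$ reduces this to $\tfrac{n}{r-1}B\!\left(\tfrac{n}{r-1},n\right)$; two applications of $\Gamma(s+1)=s\Gamma(s)$ rewrite it as $(n-1)B\!\left(\tfrac{r+n-1}{r-1},n-1\right)$, establishing \eqref{eq:ExpFM}. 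The density \eqref{eq:ExpfM} then follows by differentiating $F_{M_n}(\mu)=\Gamma(s+1)\Gamma(n)/\Gamma(s+n)$ with $s=n/(r-1)$: one has $\tfrac{d}{ds}\log F_{M_n}=\psi(s+1)-\psi(s+n)$, and chaining through $\tfrac{ds}{dr}=-\tfrac{n}{(r-1)^{2}}$ and $\tfrac{dr}{d\mu}=r\log\rho$ flips the sign into $\psi(s+n)-\psi(s+1)$ and produces the displayed prefactor, once $s+1$ and $s+n$ are re-expressed as $(r+n-1)/(r-1)$ and $nr/(r-1)$.

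For the asymptotic inverse, the key step is the telescoping identity
\[
(n-1)B\!\left(1+a,n-1\right)=\prod_{k=1}^{n-1}\frac{k}{k+a},\qquad a\coloneqq\frac{n}{\rho^{\mu-1}-1},
\]
proved from $\Gamma(n+a)=\prod_{k=1}^{n-1}(k+a)\cdot\Gamma(1+a)$. Taking logs, the defining equation $F_{M_n}(\mu)=q$ becomes $\sum_{k=1}^{n-1}\log(1+a/k)=\log(1/q)$. In the regime where $a\to 0$, I would Taylor expand $\log(1+a/k)=a/k+O(a^{2}/k^{2})$ and use $H_{n-1}=\log(n-1)+\gamma+O(1/n)$ to get $a\log(n-1)\sim\log(1/q)$, hence $a\sim\log(1/q)/\log(n-1)$ and $r-1=n/a\sim n\log(n-1)/\log(1/q)$. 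Rearranging gives $\mu-1\sim\log_\rho\!\bigl((n\log(n-1)+\log(1/q))/\log(1/q)\bigr)$, the first formula in \eqref{eq:ExpFMinv}. The second formula is immediate: both expressions are dominated by $\log_\rho n$, while $\log_\rho\log(n-1)$ and $O(1)$ terms are negligible, so their ratios to $\log_\rho n$, and hence to each other, tend to $1$ in every regime where $\log_\rho n\to\infty$.

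The main obstacle is handling the different limiting regimes uniformly. When $n\to\infty$ (with $q$ fixed or $q\to 1^-$), $a\to 0$ on the asymptotic trajectory, so the Taylor remainder $a^{2}\sum 1/k^{2}=O(a^{2})$ is lower order than $aH_{n-1}$ and the argument above is rigorous. When $n$ is fixed and $q\to 0^+$, $a\to\infty$ and the Taylor step fails; however, one then has $\mu\to 1$, and the first formula's inner ratio tends to $1$, so the asymptotic equivalence holds trivially. The second formula's failure in this last regime (because $\log_\rho(-\log q)\to-\infty$) is consistent with the theorem statement. Combining these cases yields \eqref{eq:ExpFMinv}.
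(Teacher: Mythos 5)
Your proof is correct, but it reaches (\ref{eq:ExpFM}) by a genuinely different route than the paper. The paper starts from the David--Nagaraja formula (2.3.3) for the joint law of $(X_{(1)},X_{(n)})$, substitutes $z=\lambda e^{y}$, expands $\left[e^{-z}-e^{-ze^{w}}\right]^{n-1}$ by the binomial theorem, and collapses the resulting alternating sum via a ${}_{2}F_{1}$ identity to land on the Beta function. You instead condition on $X_{(1)}=x$ and invoke memorylessness, so that $\Pr\left(X_{(n)}\leq rx\mid X_{(1)}=x\right)=\left(1-e^{-\lambda\left(r-1\right)x}\right)^{n-1}$ and the CDF becomes a single Beta integral after $u=e^{-\lambda x}$, $v=u^{r-1}$; the identity $sB\left(s,n\right)=\left(n-1\right)B\left(s+1,n-1\right)$ then gives (\ref{eq:ExpFM}). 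This is shorter, avoids the alternating sum and the hypergeometric identity entirely, and makes the probabilistic structure (the R\'enyi representation of exponential order statistics) visible; the price is that it is special to the exponential law, whereas the paper's integral template is reused verbatim for Theorem \ref{thm:ParetoCDF}. Your derivation of (\ref{eq:ExpfM}) by logarithmic differentiation of $\Gamma\left(s+1\right)\Gamma\left(n\right)/\Gamma\left(s+n\right)$ checks out, including the sign flip from $ds/dr<0$. For (\ref{eq:ExpFMinv}) the telescoping product $\prod_{k=1}^{n-1}k/\left(k+a\right)$ and the expansion $\sum_{k}\log\left(1+a/k\right)=aH_{n-1}+\mathcal{O}\left(a^{2}\right)$ is a direct asymptotic inversion, versus the paper's tactic of plugging the candidate $1+\log\left(\nicefrac{1}{q}\right)/\log\left(n-1\right)$ into the Beta function and verifying the limits (\ref{eq:lim-y})--(\ref{eq:lim-q1}); the two are comparably rigorous.

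One small loose end: you enumerate the regimes ``$n\rightarrow\infty$'' and ``$n$ fixed, $q\rightarrow0^{+}$'' but not ``$n$ fixed, $q\rightarrow1^{-}$,'' which the theorem also asserts. There $a\rightarrow0$ as well, but your step $aH_{n-1}\sim a\log\left(n-1\right)$ is no longer available; you get $r-1\sim nH_{n-1}/\log\left(\nicefrac{1}{q}\right)$, which differs from the stated $n\log\left(n-1\right)/\log\left(\nicefrac{1}{q}\right)$ by a fixed constant factor. Since $r\rightarrow\infty$, the constant washes out after taking $\log_{\rho}$, so both displayed formulae still hold; likewise your justification of the second equivalence ``in every regime where $\log_{\rho}n\rightarrow\infty$'' should be supplemented by noting that for fixed $n$ both expressions are dominated by $-\log_{\rho}\left(-\log q\right)\rightarrow\infty$. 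These are one-line additions, not gaps in the method.
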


\begin{figure}[!t]
\hfill{}\includegraphics[scale=0.525]{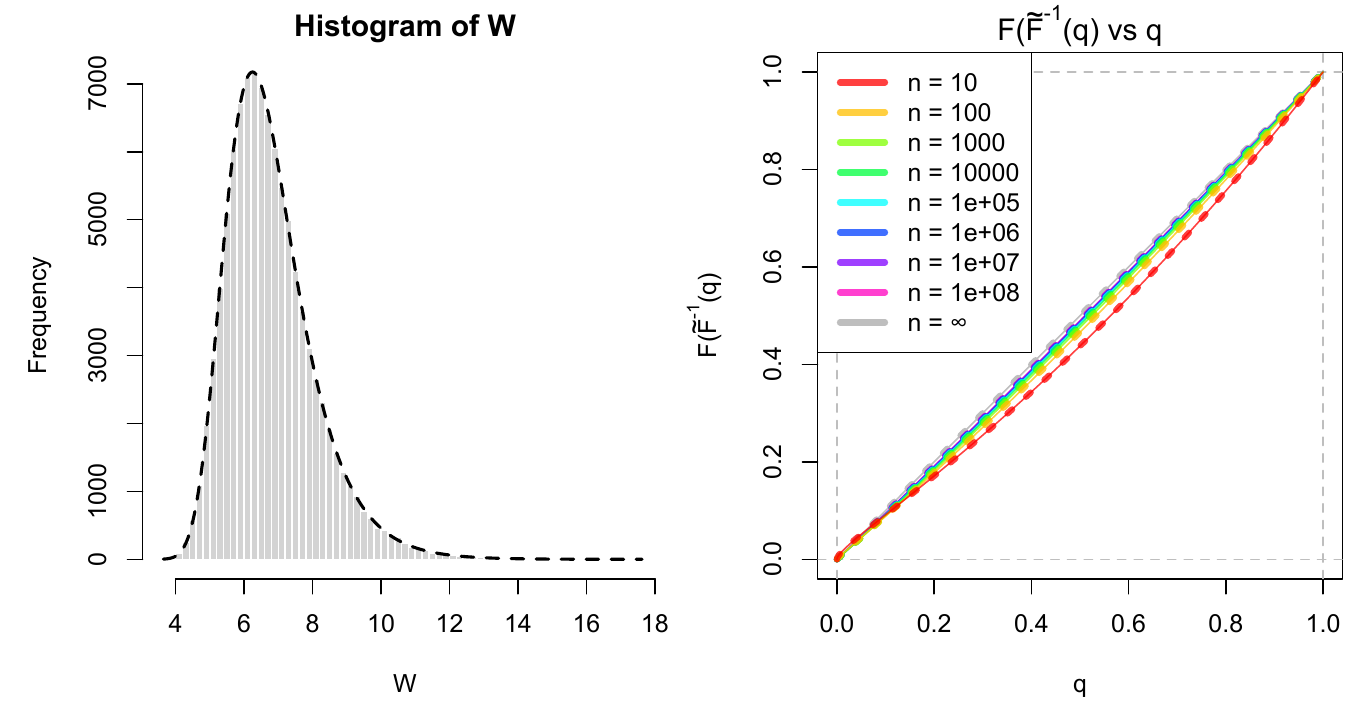}\hfill{}

\caption{The distribution of $W_{n}$ under exponential sampling. The histogram
shows $10^{5}$ simulated values $W_{n}=\log\left(\left.X_{\left(n\right)}\right/X_{\left(1\right)}\right)=\left(M_{n}-1\right)\log\rho$,
when $n=100$ and $\lambda=1$. $\mathscr{L}\left(W_{n}\right)$ and
$\mathscr{L}\left(M_{n}\right)$ do not depend on $\lambda$. The
dashed curve gives (rescaled) $f_{W_{n}}$ in (\ref{eq:ExpfW}). The
curves on the right give $F_{W_{n}}(\tilde{F}_{W_{n}}^{-1}\left(q\right))$
versus $0<q<1$ and $1\protect\leq\log_{10}n\protect\leq8$, for $\tilde{F}_{W_{n}}^{-1}$
the left-most approximation of $F_{W_{n}}^{-1}$ on the right-hand
side of (\ref{eq:ExpFWinv}). Theorem \ref{thm:distExpM} shows that
$\lim_{n\rightarrow\infty}F_{W_{n}}(\tilde{F}_{W_{n}}^{-1}\left(q\right))=q$.}

\label{fig:ExpDistWn}
\end{figure}

\begin{proof}
The proof, which uses (2.3.3) of \citet{DN03}, appears in Appendix
\ref{sec:Size-Proofs}.
\end{proof}
Figure \ref{fig:ExpDistWn} shows $\mathscr{L}\left(W_{n}\right)=\mathscr{L}\left(\left(M_{n}-1\right)\log\rho\right)$
and $F_{W_{n}}\left(\tilde{F}_{W_{n}}^{-1}\left(q\right)\right)$.
Note: 
\begin{enumerate}
\item $\mathscr{L}\left(W_{n}\right)$ and $\mathscr{L}\left(M_{n}\right)$
do not depend on $\lambda$ (\emph{cf.}\ \citet{R53} for $\mathscr{L}\left(X_{\left(i\right)}\right)$). 
\item For most values of $n$ and $q$, $F_{W_{n}}\left(\tilde{F}_{W_{n}}^{-1}\left(q\right)\right)\leq q\implies\tilde{F}_{W_{n}}^{-1}\left(q\right)\leq F_{W_{n}}^{-1}\left(q\right)$. 
\item $F_{M_{n}}^{-1}\left(q\right)$ grows like $\log_{\rho}n$ plus the
quantile function for $\mathrm{Gumbel}\left(0,\nicefrac{1}{\log\rho}\right)$.
\end{enumerate}
The following lemma gives wide bounds for $\mathbb{E}M_{n}$ and $\mathrm{Var}\left(M_{n}\right)$:

\begin{restatable}{lemma}{bndexpmomm}

\label{lem:bndExpMomM}For $n\geq2$, $X_{1},X_{2},\ldots,X_{n}\stackrel{\mathrm{iid}}{\sim}\mathrm{Exp}\left(\lambda\right)$,
$M_{n}$ in (\ref{eq:MnAn}), and
\begin{align*}
\lambda_{1} & \coloneqq\frac{n\log n}{n-1}\sim\log n\\
\lambda_{2} & \coloneqq\frac{n\left\{ \log^{2}\left(n-1\right)+\nicefrac{\pi^{2}}{3}+2\mathrm{Li}_{2}\left(\frac{n}{n-1}\right)\right\} }{n-1}\sim\log^{2}n\\
\delta_{1} & \coloneqq\frac{n\left(n-2\right)\left\{ \pi\left(\sqrt{2n-1}-1\right)-2\tan^{-1}\left(\frac{n-1}{\sqrt{2n-1}}\right)\right\} }{2\sqrt{\left(2n-1\right)\left(2n-3\right)}\left(n-1\right)}\sim\frac{\pi\sqrt{2n}}{4}\\
\delta_{2} & \coloneqq\frac{4n\left(n-2\right)\left\{ \log\left(\frac{n-1}{4}\right)+2\sinh^{-1}\left(\frac{1}{\sqrt{n-1}}\right)\right\} }{\sqrt{2n-3}\left(n-1\right)}\sim2\sqrt{2n}\log n,
\end{align*}
we have
\begin{align*}
\left\{ \mathbb{E}M_{n}-1\right\} \log\rho & \in\left[\lambda_{1},\lambda_{1}+\delta_{1}\right]\,\tilde{\in}\,\left[\log n,\frac{\pi\sqrt{2n}}{4}\right]\\
\mathrm{Var}\left(M_{n}\right)\log^{2}\rho & \in\left[\lambda_{2}-\left(\lambda_{1}+\delta_{1}\right)^{2},\lambda_{2}+\delta_{2}-\lambda_{1}^{2}\right]\,\tilde{\in}\,\left[-\frac{\pi^{2}n}{8},2\sqrt{2n}\log n\right],
\end{align*}
where $\tilde{\in}$ indicates asymptotic upper and lower bounds as
$n$ becomes large.

\end{restatable}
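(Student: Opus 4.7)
The plan is to bound the first two moments of $W_n \coloneqq (M_n - 1)\log\rho = \log(X_{(n)}/X_{(1)})$, whose distribution does not depend on $\lambda$, and then convert these into the stated results via $\{\mathbb{E}M_n - 1\}\log\rho = \mathbb{E}W_n$ and $\mathrm{Var}(M_n)\log^2\rho = \mathrm{Var}(W_n) = \mathbb{E}W_n^2 - (\mathbb{E}W_n)^2$. To set up tractable integrals, I would start from Theorem~\ref{thm:distExpM} and simplify using the identity $aB(a, n) = (n-1)!/\prod_{k=1}^{n-1}(a+k)$ to write the tail as $\Pr(W_n > w) = 1 - (n-1)!/\prod_{k=1}^{n-1}(s+k)$, where $s \coloneqq n/(e^w-1)$. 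Substituting $s$ for $w$ (so that $dw = -n\,ds/[s(s+n)]$) converts $\mathbb{E}W_n = \int_0^\infty \Pr(W_n > w)\,dw$ and $\mathbb{E}W_n^2 = 2\int_0^\infty w\,\Pr(W_n > w)\,dw$ into integrals over $s \in (0,\infty)$ of a rational function (first moment) and of the same function multiplied by $\log((s+n)/s)$ (second moment).

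The heart of the argument is then to sandwich $\prod_{k=1}^{n-1}(s+k)$ between carefully chosen simpler expressions whose reciprocals have closed-form antiderivatives matching the stated $\lambda_i, \delta_i$. The form $n\log n/(n-1)$ inside $\lambda_1$ suggests that the lower-bound integrand for $\mathbb{E}W_n$ has an elementary-logarithm primitive, whereas the $\tan^{-1}((n-1)/\sqrt{2n-1})$ inside $\delta_1$ signals that the upper-bound integrand features an irreducible quadratic factor with discriminant proportional to $-(2n-1)$. For $\mathbb{E}W_n^2$, the extra $\log((s+n)/s)$ factor, integrated against rational bounds, naturally produces $\mathrm{Li}_2$-antiderivatives (yielding the $\mathrm{Li}_2(n/(n-1))$ and $\pi^2/3$ inside $\lambda_2$) and $\sinh^{-1}$-antiderivatives (yielding $\delta_2$).

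Once $\mathbb{E}W_n \in [\lambda_1, \lambda_1 + \delta_1]$ and $\mathbb{E}W_n^2 \in [\lambda_2, \lambda_2 + \delta_2]$ are in hand, the variance bound follows by pairing extreme values: $\mathrm{Var}(W_n) \ge \lambda_2 - (\lambda_1 + \delta_1)^2$ (smallest $\mathbb{E}W_n^2$ minus largest $(\mathbb{E}W_n)^2$, using $\mathbb{E}W_n \ge \lambda_1 > 0$ to square the bound cleanly) and $\mathrm{Var}(W_n) \le \lambda_2 + \delta_2 - \lambda_1^2$, with the $\tilde{\in}$ asymptotics emerging from Taylor expansions of $\lambda_i, \delta_i$ in $n^{-1}$. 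The hard part will be the sandwich step: one must find polynomial/rational dominators and minorants of $\prod_{k=1}^{n-1}(s+k)$ that (a) capture the correct leading asymptotic order and (b) integrate exactly to the specific closed-form expressions above. Pinpointing the quadratic factor whose discriminant is $-(2n-1)$ inside $\delta_1$ (and the corresponding choice producing the $\sinh^{-1}$ in $\delta_2$) is where the computational burden concentrates.
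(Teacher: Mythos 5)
Your overall architecture matches the paper's: both work with $W_{n}=\log\left(X_{\left(n\right)}/X_{\left(1\right)}\right)=\left(M_{n}-1\right)\log\rho$, express $\mathbb{E}W_{n}$ and $\mathbb{E}W_{n}^{2}$ as tail integrals $\int_{0}^{\infty}\left\{ 1-F_{W_{n}}\left(w\right)\right\} dw$ and $2\int_{0}^{\infty}w\left\{ 1-F_{W_{n}}\left(w\right)\right\} dw$, sandwich the CDF from Theorem \ref{thm:distExpM}, and pair extreme moment bounds to bound the variance. Your reformulation $\left(n-1\right)B\left(1+s,n-1\right)=\left(n-1\right)!\left/\prod_{k=1}^{n-1}\left(s+k\right)\right.$ with $s=n/\left(e^{w}-1\right)$ is also correct. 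But the proposal stops exactly where the lemma's content begins: you defer the entire sandwich step to ``carefully chosen simpler expressions'' that you do not exhibit, and you yourself flag that this is where ``the computational burden concentrates.'' That is a genuine gap, because without a concrete two-sided bound there is no derivation of the specific $\lambda_{i}$ and $\delta_{i}$.

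The missing ingredient is a known two-sided Beta-function inequality (Cerone 2007, isolated in the paper as Lemma \ref{lem:Cerone}): for $x,y>1$ and $A\left(x\right)\coloneqq\frac{x-1}{x\sqrt{2x-1}}$, one has $0\leq\frac{1}{xy}-B\left(x,y\right)\leq A\left(x\right)A\left(y\right)$. Applied with $x=1+\frac{n}{e^{w}-1}$ and $y=n-1$, the left inequality gives $1-F_{W_{n}}\left(w\right)\geq\frac{n}{e^{w}+n-1}$, whose tail integrals evaluate exactly to $\lambda_{1}$ and $\lambda_{2}$; the right inequality gives an excess bounded by $\frac{n\left(n-2\right)}{\sqrt{2n-3}}\cdot\frac{1}{e^{w}+n-1}\sqrt{\frac{e^{w}-1}{e^{w}+2n-1}}$, whose tail integrals evaluate to $\delta_{1}$ and $\delta_{2}$. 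This also corrects your guess about provenance: the $\tan^{-1}$ and $\sinh^{-1}$ terms arise from integrating the square-root factor $\sqrt{\left(e^{w}-1\right)/\left(e^{w}+2n-1\right)}$ contributed by $A\left(x\right)A\left(y\right)$, not from an irreducible quadratic in a rational minorant or majorant of $\prod_{k=1}^{n-1}\left(s+k\right)$; a purely rational sandwich of that product would not reproduce these closed forms. Your variance pairing and the final conversion via $M_{n}=W_{n}/\log\rho+1$ agree with the paper and are fine once the moment bounds are actually established.
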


\begin{figure}[!t]
\hfill{}\includegraphics[scale=0.44]{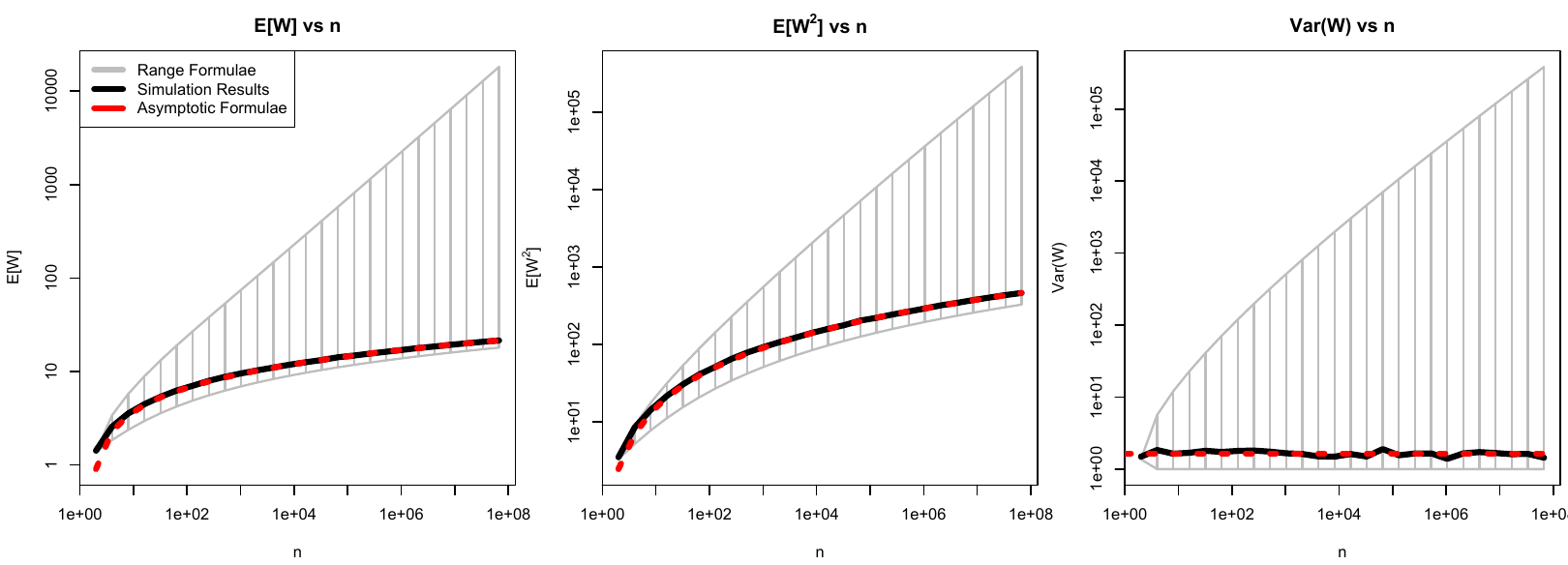}\hfill{}

\caption{The moments of $W_{n}\protect\coloneqq\log\left(\nicefrac{X_{\left(n\right)}}{X_{\left(1\right)}}\right)$
under exponential sampling. Both axes use the log scale. The range
formulae (in gray) come from Lemma \ref{lem:bndExpMomM}. We truncate
variance lower bounds at 1. Simulations (in black) use 1000 repetitions.
The asymptotic formulae (in red) come from Proposition \ref{prop:asympExpM}.}

\label{fig:ExpMomBnds}
\end{figure}

\begin{proof}
The proof, which uses bounds for the beta function provided by \citet{C07},
appears in Appendix \ref{sec:Size-Proofs}.
\end{proof}
The widths of the intervals for $\mathbb{E}M_{n}$ and $\mathrm{Var}\left(M_{n}\right)$
in Lemma \ref{lem:bndExpMomM} grow large as $n\rightarrow\infty$
(see Figure \ref{fig:ExpMomBnds}, which truncates variance lower
bounds at one). That said, the following proposition provides excellent
asymptotic approximations.

\begin{restatable}{proposition}{asympexpm}

\label{prop:asympExpM}For $X_{1},X_{2},\ldots,X_{n}\stackrel{\mathrm{iid}}{\sim}\mathrm{Exp}\left(\lambda\right)$
and $M_{n}$ in (\ref{eq:MnAn}), we have
\begin{equation}
Y_{n}\coloneqq\left(M_{n}-1\right)\log\rho-\log n-\log\log n\implies\mathrm{Gumbel}\left(0,1\right)\label{eq:ExpMtoGumbel}
\end{equation}
as $n\rightarrow\infty$, which implies that, for $n$ large,
\begin{align}
M_{n} & \stackrel{\cdot}{\sim}\mathrm{Gumbel}\left(1+\frac{\log n+\log\log n}{\log\rho},\frac{1}{\log\rho}\right)\label{eq:ExpApproxM}\\
\mathbb{E}M_{n} & \sim1+\gamma+\frac{\log n+\log\log n}{\log\rho}\sim\frac{\log n}{\log\rho}\label{eq:ExpApproxEM}\\
\mathrm{Var}\left(M_{n}\right) & \sim\frac{\pi^{2}}{6\log^{2}\rho},\textrm{ and }\mathrm{Skew}\left(M_{n}\right)\sim\frac{12\sqrt{6}\zeta\left(3\right)}{\pi^{3}}.\label{eq:ExpApproxVarM}
\end{align}

\end{restatable}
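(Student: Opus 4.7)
The plan is to reduce $W_n := (M_n - 1)\log\rho = \log(X_{(n)}/X_{(1)})$ to nearly independent Gumbel-type pieces using the R\'enyi representation and then transfer the limit to $M_n$ via Fact \ref{fact:linGumbel}. Since $\mathscr{L}(W_n)$ does not depend on $\lambda$, I take $\lambda = 1$. The R\'enyi representation for exponential order statistics gives
\[
X_{(1)} \stackrel{\mathscr{L}}{=} \frac{F}{n}, \qquad X_{(n)} - X_{(1)} \stackrel{\mathscr{L}}{=} S_{n-1} := \sum_{j=1}^{n-1} \frac{G_j}{j},
\]
with $F, G_1, \ldots, G_{n-1}$ iid $\mathrm{Exp}(1)$ and $F \perp S_{n-1}$, so
\[
W_n \stackrel{\mathscr{L}}{=} \log(F + n S_{n-1}) - \log F = \log n + \log S_{n-1} - \log F + \log\!\left(1 + \tfrac{F}{n S_{n-1}}\right).
\]

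The convergence claim (\ref{eq:ExpMtoGumbel}) then follows from three elementary facts. A direct computation gives $-\log F \sim \mathrm{Gumbel}(0,1)$. The classical extreme-value limit $X_{(n)} - \log n \implies \mathrm{Gumbel}(0,1)$, together with $F/n \to 0$ in probability, yields $S_{n-1} - \log n \implies \mathrm{Gumbel}(0,1)$ by Slutsky, whence $S_{n-1}/\log n \to 1$ in probability and $\log S_{n-1} - \log\log n \to 0$ in probability by continuous mapping. Finally, $F = O_P(1)$ and $n S_{n-1}\to\infty$ force $\log(1 + F/(nS_{n-1})) \to 0$ in probability. Using independence of $F$ and $S_{n-1}$, Slutsky's theorem gives $Y_n \implies -\log F \sim \mathrm{Gumbel}(0,1)$, and Fact \ref{fact:linGumbel} applied to $M_n = 1 + W_n/\log\rho$ produces (\ref{eq:ExpApproxM}).

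For the moment asymptotics (\ref{eq:ExpApproxEM})--(\ref{eq:ExpApproxVarM}), I would use the same decomposition with independence. A standard identity gives $\mathbb{E}[\log F] = -\gamma$. A Taylor expansion of $\log$ around $H_{n-1} := \mathbb{E}[S_{n-1}] = \sum_{j=1}^{n-1} 1/j \sim \log n$, combined with $\mathrm{Var}(S_{n-1}) = \sum_{j=1}^{n-1} 1/j^2 = O(1)$, yields $\mathbb{E}[\log S_{n-1}] = \log H_{n-1} + O(1/\log^2 n) = \log\log n + o(1)$, and a parallel estimate controls $\mathbb{E}[\log(1 + F/(nS_{n-1}))] = o(1)$. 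Hence $\mathbb{E} W_n = \log n + \log\log n + \gamma + o(1)$, which rearranges to (\ref{eq:ExpApproxEM}). Because $\log S_{n-1}$ has variance $O(1/\log^2 n)$ by the same delta-method argument, the limiting second and third central moments of $W_n$ coincide with those of $-\log F$, giving $\mathrm{Var}(W_n) \to \pi^2/6$ and $\mathrm{Skew}(W_n) \to 12\sqrt{6}\zeta(3)/\pi^3$; scaling by $1/\log\rho$ (and invariance of skewness under positive affine transformation) produces (\ref{eq:ExpApproxVarM}).

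The main obstacle is promoting the weak convergence in (\ref{eq:ExpMtoGumbel}) to convergence of the first three moments needed to justify the $\sim$ in (\ref{eq:ExpApproxEM})--(\ref{eq:ExpApproxVarM}). I would establish uniform integrability of $|Y_n|^3$ by controlling each summand separately: $-\log F$ has explicit super-exponential left tail and exponential right tail, while the centered $\log S_{n-1}$ and the remainder $\log(1 + F/(nS_{n-1}))$ have uniformly bounded third absolute moments via Chebyshev and the weighted-exponential tail bounds for $S_{n-1}$. Should this route prove fussy, an alternative is to read the asymptotics directly from the exact CDF (\ref{eq:ExpFM}): setting $(\mu - 1)\log\rho = \log n + \log\log n + y$ makes $\alpha_n := (\rho^{\mu-1}+n-1)/(\rho^{\mu-1}-1) \to 1^+$ with $(\alpha_n - 1)\log n \to e^{-y}$, so Stirling-type asymptotics for $B(\alpha_n, n-1) = \Gamma(\alpha_n)\Gamma(n-1)/\Gamma(n-1+\alpha_n)$ give $(n-1) B(\alpha_n, n-1) \to e^{-e^{-y}}$, which recovers (\ref{eq:ExpMtoGumbel}) and, together with tail control from (\ref{eq:ExpfM}), the corresponding moment convergence.
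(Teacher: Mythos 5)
Your argument is correct but follows a genuinely different route from the paper's. The paper proves (\ref{eq:ExpMtoGumbel}) analytically: it substitutes $\mu=1+(y+\log n+\log\log n)/\log\rho$ into the exact CDF (\ref{eq:ExpFM}) and applies the asymptotic $B(c,x)\sim\Gamma(c)x^{-c}$ --- which is precisely the ``alternative'' you sketch in your closing paragraph. Your primary route instead uses the R\'enyi representation to split $W_{n}$ into an \emph{exactly} $\mathrm{Gumbel}(0,1)$ piece, $-\log F=-\log(nX_{(1)})$, plus terms vanishing in probability, and then invokes Slutsky. This buys two things the paper's computation does not: it makes transparent that the Gumbel fluctuation of the size comes from the minimum $X_{(1)}$ rather than the maximum (the maximum contributes only the deterministic $\log\log n$ shift, consistent with Remark \ref{rem:altAsymp}), and it sets up the moment claims honestly. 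The paper passes from (\ref{eq:ExpMtoGumbel}) to (\ref{eq:ExpApproxEM})--(\ref{eq:ExpApproxVarM}) by simply reading off Gumbel moments, which tacitly assumes moment convergence; you correctly identify uniform integrability as the real obstacle, and your decomposition (independence of $F$ and $S_{n-1}$, $\mathbb{E}\log F=-\gamma$, $\mathrm{Var}(\log S_{n-1})=\mathcal{O}(\log^{-2}n)$ by the delta method with the left tail controlled via $S_{n-1}\stackrel{\mathscr{L}}{=}\max\{E_{1},\ldots,E_{n-1}\}$, so $\Pr(S_{n-1}\leq t)=(1-e^{-t})^{n-1}$, plus a Cauchy--Schwarz bound on the remainder $\log(1+F/(nS_{n-1}))$) gives a workable path to the first three moments. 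The one place your write-up remains a sketch is this uniform-integrability step, but the paper is no more rigorous there, so nothing essential is missing.
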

\begin{proof}
We start with (\ref{eq:ExpMtoGumbel}). Fixing $y\in\mathbb{R}$,
we have
\begin{align}
\Pr\left(Y_{n}\leq y\right) & =\Pr\left(M_{n}\leq1+\frac{y+\log n+\log\log n}{\log\rho}\right)\\
 & =\left(n-1\right)B\left(1+\frac{n}{e^{y}n\log n-1},n-1\right)\label{eq:useFW}\\
 & \sim\frac{\Gamma\left(1+\frac{n}{e^{y}n\log n-1}\right)}{\left(n-1\right)^{\frac{n}{e^{y}n\log n-1}}}\longrightarrow e^{-e^{-y}},\label{eq:takeLim}
\end{align}
where (\ref{eq:useFW}) uses (\ref{eq:ExpFM}) and (\ref{eq:takeLim})
uses a well-known approximation of $\lim_{x\rightarrow\infty}B\left(c,x\right)$
and the continuity of $\Gamma$ at 1. This, with Fact \ref{fact:linGumbel},
proves (\ref{eq:ExpMtoGumbel}) and (\ref{eq:ExpApproxM}). Results
(\ref{eq:ExpApproxEM}) and (\ref{eq:ExpApproxVarM}) use well-known
properties of $\mathrm{Gumbel}\left(\mu,\sigma\right)$ (Table \ref{tab:distributions}).
\end{proof}
\begin{restatable}{myremark}{altasymp}

\label{rem:altAsymp}One can repurpose the proof above to show that,
for $y\in\mathbb{R}$, $\lim_{n\rightarrow\infty}\Pr\left(W_{n}-\log n\leq y\right)=0$,
which implies that $W_{n}-\log n\rightarrow\infty$ in probability,
as $n\rightarrow\infty$. That is, convergence in distribution relies
on the $\log\log n$ term in the standardization of $W_{n}=\left(M_{n}-1\right)\log\rho$
(\emph{cf.}\ (\ref{eq:lim-y})).

\end{restatable}

Proposition \ref{prop:asympExpM} shows that, while $\mathbb{E}M_{n}$
grows like $\log_{\rho}n$, as $n\rightarrow\infty$, $\mathrm{Var}\left(M_{n}\right)$
is asymptotically bounded (Figure \ref{fig:ExpMomBnds}, red and black
curves). Past a certain point, as $M_{n}$ grows, its variance does
not. This is also a property of certain occupancy counts (see Theorems
\ref{thm:Karlin} and \ref{thm:Bogachev-etal}, which come from \citet{K67}
and \citet{BGY08}).

\subsection{The Pareto Setting}

We now characterize $\mathscr{L}\left(M_{n}\right)$ when $X_{1},X_{2},\ldots,X_{n}\stackrel{\mathrm{iid}}{\sim}\mathrm{Pareto}\left(\nu,\beta\right)$.
Note that:

\begin{restatable}{theorem}{paretocdf}

\label{thm:ParetoCDF}For $n\geq2$, $X_{1},\ldots,X_{n}\stackrel{\mathrm{iid}}{\sim}\mathrm{Pareto}\left(\nu,\beta\right)$,
and $M_{n}$ in (\ref{eq:MnAn}), we have
\begin{align}
F_{M_{n}}\left(\mu\right) & =\left(1-\rho^{-\beta\left(\mu-1\right)}\right)^{n-1}\label{eq:ParetoFM}\\
f_{M_{n}}\left(\mu\right) & =\beta\left(n-1\right)\left(1-\rho^{-\beta\left(\mu-1\right)}\right)^{n-2}\rho^{-\beta\left(\mu-1\right)}\log\rho\label{eq:ParetofM}\\
F_{M_{n}}^{-1}\left(q\right) & =1-\frac{1}{\beta\log\rho}\log\left(1-q^{1\left/\left(n-1\right)\right.}\right)\sim\frac{\log n-\log\left(-\log q\right)}{\beta\log\rho},\label{eq:ParetoFMinv}
\end{align}
for $\mu>1$ and $0<q<1$, where the asymptotic result holds as either
or both $n\rightarrow\infty$ and $q\rightarrow1^{-}$. 

\end{restatable}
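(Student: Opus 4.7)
The plan is to reduce the Pareto setting to an exponential spacings computation by a change of variables, and then to invert and expand the resulting closed form. Concretely, if $X\sim\mathrm{Pareto}(\nu,\beta)$, then $Y\coloneqq\log(X/\nu)\sim\mathrm{Exp}(\beta)$ (since $\Pr(Y\leq y)=\Pr(X\leq\nu e^{y})=1-(\nu/(\nu e^{y}))^{\beta}=1-e^{-\beta y}$). Applying this to each of the i.i.d.\ $X_{i}$, I obtain $Y_{i}\coloneqq\log(X_{i}/\nu)\stackrel{\mathrm{iid}}{\sim}\mathrm{Exp}(\beta)$ with $Y_{(i)}=\log(X_{(i)}/\nu)$, so that
\[
(M_{n}-1)\log\rho=\log X_{(n)}-\log X_{(1)}=Y_{(n)}-Y_{(1)}.
\]

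Now I invoke the classical result on exponential spacings (via the memoryless property, or equivalently R\'enyi's representation): the range $Y_{(n)}-Y_{(1)}$ of $n$ i.i.d.\ $\mathrm{Exp}(\beta)$ variables is equal in distribution to $\max(Z_{1},\ldots,Z_{n-1})$ where $Z_{j}\stackrel{\mathrm{iid}}{\sim}\mathrm{Exp}(\beta)$. This immediately yields
\[
\Pr\bigl(Y_{(n)}-Y_{(1)}\leq w\bigr)=\bigl(1-e^{-\beta w}\bigr)^{n-1},\qquad w\geq 0,
\]
and substituting $w=(\mu-1)\log\rho$ gives (\ref{eq:ParetoFM}). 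Formula (\ref{eq:ParetofM}) then follows by differentiating $F_{M_{n}}$ in $\mu$, using $\tfrac{d}{d\mu}\rho^{-\beta(\mu-1)}=-\beta\log\rho\cdot\rho^{-\beta(\mu-1)}$.

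For (\ref{eq:ParetoFMinv}), I invert algebraically: setting $(1-\rho^{-\beta(\mu-1)})^{n-1}=q$ and solving for $\mu$ gives the exact expression $F_{M_{n}}^{-1}(q)=1-\tfrac{1}{\beta\log\rho}\log(1-q^{1/(n-1)})$. To obtain the asymptotic, I write $q^{1/(n-1)}=\exp\!\bigl(\tfrac{\log q}{n-1}\bigr)$ and expand: as $n\to\infty$ (or $q\to 1^{-}$),
\[
1-q^{1/(n-1)}=-\frac{\log q}{n-1}+O\!\left(\frac{\log^{2}q}{n^{2}}\right)=\frac{-\log q}{n-1}\bigl(1+o(1)\bigr),
\]
so that $-\log(1-q^{1/(n-1)})=\log(n-1)-\log(-\log q)+o(1)$. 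Dividing by $\beta\log\rho$ and absorbing $\log(n-1)\sim\log n$ and the leading constant $1$ into the $o(1)$ error yields the stated $\frac{\log n-\log(-\log q)}{\beta\log\rho}$.

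The main obstacle is essentially bookkeeping: verifying that the $o(1)$ in the expansion of $\log(1-q^{1/(n-1)})$ is valid in the joint regime where both $n\to\infty$ and $q\to 1^{-}$. This is handled by noting $\log q\in(-\infty,0)$ is bounded away from $0$ on any fixed sub-interval of $(0,1)$, while near $q=1^{-}$ one has $-\log q\to 0$ so $(-\log q)/(n-1)\to 0$ automatically for any $n\geq 2$; thus the first-order Taylor step is uniformly valid, and the proof is complete. All other steps (the change of variable, the spacing representation, and the algebraic inversion) are direct.
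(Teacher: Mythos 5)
Your proof is correct, and it reaches (\ref{eq:ParetoFM}) by a genuinely different route than the paper. The paper starts from the density of $\log X_{i}$ and applies the integral formula (2.3.3) of \citet{DN03} for the CDF of the sample range $W_{n}=\log X_{\left(n\right)}-\log X_{\left(1\right)}$, evaluating the integral via the substitution $x=\nicefrac{\nu}{e^{y}}$ to obtain $\left(1-e^{-\beta w}\right)^{n-1}$. You instead observe that $\log\left(\nicefrac{X_{i}}{\nu}\right)\stackrel{\mathrm{iid}}{\sim}\mathrm{Exp}\left(\beta\right)$ and invoke the memoryless/spacings representation, under which the range of $n$ i.i.d.\ exponentials is distributed as the maximum of $n-1$ of them; this yields the same CDF with no integration, and it makes transparent the paper's later observation that $F_{M_{n}}=F_{A_{n-1}}$ (one degree of freedom lost to estimating $\nu$ by $X_{\left(1\right)}$). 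For the asymptotics of $F_{M_{n}}^{-1}$, the paper uses the two-sided elementary bound of Lemma \ref{lem:ParSizeInequal}, namely $-\nicefrac{\log q}{m}\leq q^{-1/m}-1\leq-\nicefrac{\log q}{\left(m+\log q\right)}$ with $m=n-1$, whereas you use a first-order Taylor expansion of $1-e^{\log q/\left(n-1\right)}$; both are valid in the stated regimes because $\nicefrac{\log q}{\left(n-1\right)}\rightarrow0$ whenever $n\rightarrow\infty$ or $q\rightarrow1^{-}$, and because $\log n-\log\left(-\log q\right)\rightarrow\infty$ there, so the additive $O\left(1\right)$ discrepancy (the leading constant $1$ and $\log\left(n-1\right)$ versus $\log n$) is absorbed in the ratio-to-one sense of $\sim$. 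The paper's inequality-based route has the minor advantage of giving explicit nonasymptotic error bounds; yours is shorter and more probabilistic.
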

\begin{proof}
The proof, which uses (2.3.3) of \citet{DN03}, appears in Appendix
\ref{sec:Size-Proofs}.
\end{proof}
The $\mathrm{Pareto}\left(\nu,\beta\right)$ setting---unlike the
$\mathrm{Exp}\left(\lambda\right)$ setting (see Lemma \ref{lem:bndExpMomM}
and Proposition \ref{prop:asympExpM})---permits direct computation
of $\mathbb{E}M_{n}$ and $\mathrm{Var}\left(M_{n}\right)$.

\begin{restatable}{proposition}{paretomgfm}

\label{prop:ParetoMGF-M}For $X_{1},\ldots,X_{n}\stackrel{\mathrm{iid}}{\sim}\mathrm{Pareto}\left(\nu,\beta\right)$
with $n\geq2$ and $M_{n}$ in (\ref{eq:MnAn}), we have $\mathbb{E}e^{tM_{n}}=\left(n-1\right)e^{t}B\left(1-\nicefrac{t}{\beta\log\rho},n-1\right)$,
for $t<\beta\log\rho$, so that
\begin{align*}
\mathbb{E}M_{n} & =1+\frac{\psi\left(n\right)+\gamma}{\beta\log\rho}\sim\frac{\log n}{\beta\log\rho}\\
\mathrm{Var}\left(M_{n}\right) & =\frac{\frac{\pi^{2}}{6}-\psi_{1}\left(n\right)}{\beta^{2}\log^{2}\rho}\sim\frac{\pi^{2}}{6\beta^{2}\log^{2}\rho}\\
\mathrm{Skew}\left(M_{n}\right) & =\frac{2\zeta\left(3\right)+\psi_{2}\left(n\right)}{\left[\frac{\pi^{2}}{6}-\psi_{1}\left(n\right)\right]^{\nicefrac{3}{2}}}\sim\frac{12\sqrt{6}\zeta\left(3\right)}{\pi^{3}},
\end{align*}
where the asymptotic results hold as $n\rightarrow\infty$.

\end{restatable}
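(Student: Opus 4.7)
The plan is to compute the MGF directly from the density $f_{M_n}$ given in Theorem \ref{thm:ParetoCDF} and then read off the mean, variance, and skewness by differentiating the cumulant generating function (CGF). From (\ref{eq:ParetofM}),
\[
\mathbb{E}e^{tM_n} = \beta(n-1)\log\rho \int_1^\infty e^{t\mu}\bigl(1-\rho^{-\beta(\mu-1)}\bigr)^{n-2}\rho^{-\beta(\mu-1)}\,d\mu.
\]
I would apply the substitution $u = \rho^{-\beta(\mu-1)}$, for which $du = -\beta(\log\rho)u\,d\mu$, $\mu = 1 - (\log u)/(\beta\log\rho)$, and $e^{t\mu} = e^t u^{-t/(\beta\log\rho)}$. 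The limits go from $u=1$ (at $\mu=1$) to $u=0$ (at $\mu=\infty$), and the $u\,d\mu$ and $\beta(\log\rho)$ factors cancel cleanly, leaving
\[
\mathbb{E}e^{tM_n} = (n-1)e^t \int_0^1 u^{-t/(\beta\log\rho)}(1-u)^{n-2}\,du = (n-1)e^t\,B\!\left(1-\tfrac{t}{\beta\log\rho},\,n-1\right),
\]
valid whenever $1-t/(\beta\log\rho) > 0$, i.e.\ $t < \beta\log\rho$, so that the beta integral converges.

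Next, I would pass to the CGF $\kappa(t) := \log\mathbb{E}e^{tM_n} = \log(n-1) + t + \log\Gamma\bigl(1-\tfrac{t}{\beta\log\rho}\bigr) + \log\Gamma(n-1) - \log\Gamma\bigl(n-\tfrac{t}{\beta\log\rho}\bigr)$, writing $s := t/(\beta\log\rho)$ and using $\frac{ds}{dt} = 1/(\beta\log\rho)$. Differentiating once, twice, and three times in $t$ and evaluating at $t=0$ gives
\[
\kappa'(0) = 1 + \frac{\psi(n)-\psi(1)}{\beta\log\rho}, \quad \kappa''(0) = \frac{\psi_1(1)-\psi_1(n)}{(\beta\log\rho)^2}, \quad \kappa'''(0) = \frac{\psi_2(n)-\psi_2(1)}{(\beta\log\rho)^3}.
\]
Plugging in the standard values $\psi(1) = -\gamma$, $\psi_1(1) = \pi^2/6$, and $\psi_2(1) = -2\zeta(3)$ yields the claimed closed forms for $\mathbb{E}M_n$ and $\mathrm{Var}(M_n)$, and the identity $\mathrm{Skew}(M_n) = \kappa'''(0)/\kappa''(0)^{3/2}$ delivers the skewness expression.

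For the asymptotics as $n\to\infty$, I would invoke the well-known behavior $\psi(n) = \log n - 1/(2n) + O(n^{-2})$, so $\psi(n)+\gamma \sim \log n$ and $\mathbb{E}M_n \sim \log n/(\beta\log\rho)$; and $\psi_1(n), \psi_2(n) \to 0$, so $\mathrm{Var}(M_n) \to \pi^2/(6\beta^2\log^2\rho)$ and $\mathrm{Skew}(M_n)\to 2\zeta(3)/(\pi^2/6)^{3/2} = 12\sqrt{6}\,\zeta(3)/\pi^3$, matching the Gumbel skewness as expected from Proposition \ref{prop:asympExpM}. The calculation has no serious obstacle; the only genuine step of ingenuity is spotting the beta-integral substitution, after which everything follows from routine differentiation and standard special-function identities. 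A minor point I would verify is the interchange of differentiation and integration needed to justify differentiating the MGF at $t=0$, which holds because the beta integral is smooth in $s$ on a neighborhood of $0$.
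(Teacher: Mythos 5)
Your proposal is correct and follows essentially the same route as the paper: the identical substitution $u=\rho^{-\beta(\mu-1)}$ reduces the MGF integral to the beta function, and the moments then come from differentiation at $t=0$ together with $\psi(1)=-\gamma$, $\psi_1(1)=\pi^2/6$, $\psi_2(1)=-2\zeta(3)$, and the large-$n$ polygamma asymptotics. The only cosmetic difference is that you differentiate the cumulant generating function to read off the cumulants directly, whereas the paper differentiates the MGF itself to obtain raw moments $\mathbb{E}M_n^k$; both yield the stated mean, variance, and skewness.
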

\begin{proof}
The proof, which uses Theorem \ref{thm:ParetoCDF}, appears in Appendix
\ref{sec:Size-Proofs}.
\end{proof}
Note that, as $n\rightarrow\infty$,
\begin{equation}
\left\{ \begin{array}{c}
F_{\mathrm{Pareto}\left(\nu,\beta\right),M_{n}}^{-1}\left(q\right)\lesseqqgtr F_{\mathrm{Exp}\left(\lambda\right),M_{n}}^{-1}\left(q\right)\\
\mathbb{E}_{\mathrm{Pareto}\left(\nu,\beta\right)}M_{n}\lesseqqgtr\mathbb{E}_{\mathrm{Exp}\left(\lambda\right)}M_{n}\\
\mathrm{Var}_{\mathrm{Pareto}\left(\nu,\beta\right)}\left(M_{n}\right)\lesseqqgtr\mathrm{Var}_{\mathrm{Exp}\left(\lambda\right)}\left(M_{n}\right)
\end{array}\right\} \iff\beta\gtreqqless1\label{eq:compMom}
\end{equation}
(Theorems \ref{thm:distExpM} and \ref{thm:ParetoCDF}; Propositions
\ref{prop:asympExpM} and \ref{prop:ParetoMGF-M}). If we dial up
(down) the size of Pareto outliers, an exponential histogram of $\mathrm{Pareto}\left(\nu,\beta\right)$
data becomes larger (smaller) than one of $\mathrm{Exp}\left(\lambda\right)$
data, and size has a larger (smaller) variance. It seems strange to
conclude that a histogram of $\mathrm{Exp}\left(\lambda\right)$ values
is larger than a histogram of $\mathrm{Pareto}\left(\nu,\beta\right)$
values, but see our discussion of $\mathrm{Uniform}\left(0,1\right)$
in §\ref{subsec:Heavy-and-Light-Tailed}. We turn to the asymptotic
distribution of $M_{n}$ under $\mathrm{Pareto}\left(\nu,\beta\right)$
sampling.

\begin{restatable}{proposition}{paretolimm}

\label{prop:ParetoLim-M}For $X_{1},X_{2},\ldots,X_{n}\stackrel{\mathrm{iid}}{\sim}\mathrm{Pareto}\left(\nu,\beta\right)$
and $M_{n}$ as in (\ref{eq:MnAn}), we have $\left(M_{n}-1\right)\left(\beta\log\rho\right)-\log n\implies\mathrm{Gumbel}\left(0,1\right)$
as $n\rightarrow\infty$, which implies that, for $n$ large, 
\[
M_{n}\stackrel{\cdot}{\sim}\mathrm{Gumbel}\left(1+\frac{\log n}{\beta\log\rho},\frac{1}{\beta\log\rho}\right),
\]
so that $\mathbb{E}M_{n}\sim\frac{\log n}{\beta\log\rho}$, $\mathrm{Var}\left(M_{n}\right)\sim\frac{\pi^{2}}{6\beta^{2}\log^{2}\rho}$,
and $\mathrm{Skew}\left(M_{n}\right)\sim\frac{12\sqrt{6}\zeta\left(3\right)}{\pi^{3}}$.

\end{restatable}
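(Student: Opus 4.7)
The plan is to lean on the closed form CDF supplied by Theorem \ref{thm:ParetoCDF}, since $F_{M_n}(\mu)=(1-\rho^{-\beta(\mu-1)})^{n-1}$ is simple enough that the Gumbel limit falls out by direct substitution. There is no need to invoke general extreme-value theory machinery here.

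First I would fix $y\in\mathbb{R}$ and set $\mu_n(y)\coloneqq 1+\frac{y+\log n}{\beta\log\rho}$, so that the event $\{(M_n-1)(\beta\log\rho)-\log n\leq y\}$ is identical to $\{M_n\leq\mu_n(y)\}$. Evaluating the exponent gives
\[
\rho^{-\beta(\mu_n(y)-1)}=\rho^{-(y+\log n)/\log\rho}=e^{-(y+\log n)}=\frac{e^{-y}}{n},
\]
so Theorem \ref{thm:ParetoCDF} yields
\[
\Pr\bigl((M_n-1)(\beta\log\rho)-\log n\leq y\bigr)=\left(1-\frac{e^{-y}}{n}\right)^{n-1}.
\]
Letting $n\to\infty$ and using the standard limit $(1-c/n)^{n-1}\to e^{-c}$ with $c=e^{-y}$, the right-hand side converges to $\exp(-e^{-y})$, which is the CDF of $\mathrm{Gumbel}(0,1)$. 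Because this holds for every $y$, we have the claimed convergence in distribution.

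From here the stated approximate distribution is immediate: applying Fact \ref{fact:linGumbel} with $a=1/(\beta\log\rho)$ and $b=1+\log n/(\beta\log\rho)$ transforms a $\mathrm{Gumbel}(0,1)$ variable into $\mathrm{Gumbel}\!\left(1+\frac{\log n}{\beta\log\rho},\frac{1}{\beta\log\rho}\right)$. The asymptotic mean, variance, and skewness are then obtained by reading off the corresponding entries for $\mathrm{Gumbel}(\mu,\sigma)$ in Table \ref{tab:distributions}; these expressions also match (and are already justified by) the exact polygamma formulas of Proposition \ref{prop:ParetoMGF-M} together with $\psi(n)\sim\log n$, $\psi_1(n)\to 0$, and $\psi_2(n)\to 0$ as $n\to\infty$, so no separate uniform-integrability argument is needed.

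The only delicate point is really bookkeeping, namely checking that substituting $\mu=\mu_n(y)$ keeps us in the regime $\mu>1$ where (\ref{eq:ParetoFM}) applies; this holds for all sufficiently large $n$ since $\log n\to\infty$. I therefore do not expect a substantive obstacle: the exact, elementary form of $F_{M_n}$ under $\mathrm{Pareto}(\nu,\beta)$ sampling makes the Gumbel limit a one-line calculation, in contrast to the $\mathrm{Exp}(\lambda)$ case (Proposition \ref{prop:asympExpM}) where a beta-to-gamma asymptotic was needed.
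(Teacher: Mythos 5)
Your proposal is correct and follows essentially the same route as the paper's own proof: substitute $\mu_n(y)=1+\frac{y+\log n}{\beta\log\rho}$ into the exact CDF (\ref{eq:ParetoFM}) from Theorem \ref{thm:ParetoCDF} to get $\left(1-\frac{e^{-y}}{n}\right)^{n-1}\rightarrow e^{-e^{-y}}$, then apply Fact \ref{fact:linGumbel} and read off the Gumbel moments. Your added remarks (checking $\mu_n(y)>1$ for large $n$, and cross-checking the moment asymptotics against Proposition \ref{prop:ParetoMGF-M}) are harmless elaborations of the same argument.
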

\begin{proof}
Fixing $y\in\mathbb{R}$ we note that
\begin{align}
\Pr\left(\left(M_{n}-1\right)\left(\beta\log\rho\right)-\log n\leq y\right) & =\Pr\left(M_{n}\leq1+\frac{y+\log n}{\beta\log\rho}\right)\\
 & =\left(1-\frac{e^{-y}}{n}\right)^{n-1}\stackrel[\infty]{n}{\longrightarrow}e^{-e^{-y}},\label{eq:useCDF}
\end{align}
where (\ref{eq:useCDF}) uses (\ref{eq:ParetoFM}). This proves the
first result. The second uses Fact \ref{fact:linGumbel}.
\end{proof}
Comparing Propositions \ref{prop:asympExpM} and \ref{prop:ParetoLim-M},
we see that the $\mathrm{Exp}\left(\lambda\right)$ setting needs
the $\log\log n$ term to place $M_{n}$ into the Gumbel domain of
attraction while the $\mathrm{Pareto}\left(\nu,\beta\right)$ setting
does not (see Remark \ref{rem:altAsymp}). We finally consider $\mathscr{L}\left(A_{n}\right)$:

\begin{restatable}{theorem}{paretoa}

\label{thm:Parato-A}For $X_{1},X_{2},\ldots,X_{n}\stackrel{\mathrm{iid}}{\sim}\mathrm{Pareto}\left(\nu,\beta\right)$
and $A_{n}$ in (\ref{eq:MnAn}), we have
\begin{align}
F_{A_{n}}\left(a\right) & =\left(1-\rho^{-\beta\left(a-1\right)}\right)^{n}\label{eq:ParetoFA}\\
f_{A_{n}}\left(a\right) & =n\beta\left(1-\rho^{-\beta\left(a-1\right)}\right)^{n-1}\rho^{-\beta\left(a-1\right)}\log\rho\label{eq:ParetofA}\\
F_{A_{n}}^{-1}\left(q\right) & =1-\frac{\log\left(1-q^{\nicefrac{1}{n}}\right)}{\beta\log\rho}\sim\frac{\log n-\log\left(-\log q\right)}{\beta\log\rho},\label{eq:ParetoFAinv}
\end{align}
for $a>1$, $0<q<1$, where the asymptotic result holds as either
or both $n\rightarrow\infty$ and $q\rightarrow1^{-}$. This gives
$\mathbb{E}e^{tA_{n}}=ne^{t}B\left(1-\nicefrac{t}{\beta\log\rho},n\right),$
for $t<\beta\log\rho$, so that
\begin{align}
\mathbb{E}A_{n} & =1+\frac{\psi\left(n+1\right)+\gamma}{\beta\log\rho}\sim\frac{\log n}{\beta\log\rho}\label{eq:EA}\\
\mathrm{Var}\left(A_{n}\right) & =\frac{\frac{\pi^{2}}{6}-\psi_{1}\left(n\right)}{\beta^{2}\log^{2}\rho}\sim\frac{\pi^{2}}{6\beta^{2}\log^{2}\rho}\label{eq:VarA}\\
\mathrm{Skew}\left(A_{n}\right) & =\frac{2\zeta\left(3\right)+\psi_{2}\left(n\right)+\frac{2}{n^{3}}}{\left[\frac{\pi^{2}}{6}-\psi_{1}\left(n\right)\right]^{\nicefrac{3}{2}}}\sim\frac{12\sqrt{6}\zeta\left(3\right)}{\pi^{3}}.\label{eq:SkewA}
\end{align}
Finally, we have $\left(A_{n}-1\right)\beta\log\rho-\log n\implies\mathrm{Gumbel}\left(0,1\right)$
as $n\rightarrow\infty$, which implies that, for $n$ large,
\begin{equation}
A_{n}\stackrel{\cdot}{\sim}\mathrm{Gumbel}\left(1+\frac{\log n}{\beta\log\rho},\frac{1}{\beta\log\rho}\right),\label{eq:AGumbel}
\end{equation}
so that $\mathbb{E}A_{n}\sim\frac{\log n}{\beta\log\rho}$, $\mathrm{Var}\left(A_{n}\right)\sim\frac{\pi^{2}}{6\beta^{2}\log^{2}\rho}$,
and $\mathrm{Skew}\left(A_{n}\right)\sim\frac{12\sqrt{6}\zeta\left(3\right)}{\pi^{3}}$.

\end{restatable}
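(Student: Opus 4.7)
The statement splits into four parts---the exact CDF/pdf/quantile, the moment generating function, the explicit moments, and the Gumbel limit---and my plan is to prove them in that order, mirroring the treatment of $M_n$ in Theorem~\ref{thm:ParetoCDF} and Propositions~\ref{prop:ParetoMGF-M} and \ref{prop:ParetoLim-M}. The key simplification is that $A_n$ replaces the random lower log-endpoint $\log X_{(1)}$ by the deterministic quantity $\log\nu$, so at every step the algebra is slightly cleaner than in the $M_n$ case.

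For the distributional part, since the $X_i$ are iid with $F_X(x)=1-(\nu/x)^\beta$ on $(\nu,\infty)$, the maximum satisfies $F_{X_{(n)}}(x)=(1-(\nu/x)^\beta)^n$, and substituting $x=\nu\rho^{a-1}$ gives $F_{A_n}(a)=(1-\rho^{-\beta(a-1)})^n$, which is \eqref{eq:ParetoFA}. Differentiating in $a$ yields the density \eqref{eq:ParetofA}, and inverting the CDF produces the exact quantile in \eqref{eq:ParetoFAinv}; the asymptotic equivalence follows from Taylor-expanding $q^{1/n}=1+(\log q)/n+O(1/n^2)$, which gives $\log(1-q^{1/n})=\log(-\log q/n)+o(1)$ as $n\to\infty$ or $q\to 1^-$.

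To obtain the MGF, I would compute $\mathbb{E}e^{tA_n}=\int_1^\infty e^{ta}f_{A_n}(a)\,da$ using the change of variables $u=\rho^{-\beta(a-1)}$, under which $a=1-\log u/(\beta\log\rho)$, $da=-du/(\beta u\log\rho)$, and $e^{ta}=e^t u^{-t/(\beta\log\rho)}$. After cancellation the integral collapses to $ne^t\int_0^1 u^{-t/(\beta\log\rho)}(1-u)^{n-1}\,du=ne^t B(1-t/(\beta\log\rho),n)$, valid for $t<\beta\log\rho$. Writing $\log\mathbb{E}e^{tA_n}=t+\log n+\log\Gamma(1-t/(\beta\log\rho))+\log\Gamma(n)-\log\Gamma(n+1-t/(\beta\log\rho))$ and differentiating at $t=0$ reads off the first three cumulants in terms of $\psi,\psi_1,\psi_2$ evaluated at $1$ and at $n+1$, with the special values $\psi(1)=-\gamma$, $\psi_1(1)=\pi^2/6$, and $\psi_2(1)=-2\zeta(3)$. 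The recurrences $\psi_1(n+1)=\psi_1(n)-1/n^2$ and $\psi_2(n+1)=\psi_2(n)+2/n^3$ then rearrange the cumulants into the forms \eqref{eq:EA}--\eqref{eq:SkewA} (the $2/n^3$ correction in \eqref{eq:SkewA} traces precisely to the $\psi_2$ recurrence), and the leading-order asymptotics follow from $\psi_1(n),\psi_2(n)\to 0$ together with $\psi(n+1)+\gamma=H_n\sim\log n$.

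Finally, for the Gumbel limit I would substitute $a=1+(y+\log n)/(\beta\log\rho)$ into \eqref{eq:ParetoFA} to obtain, for each fixed $y\in\mathbb{R}$,
$$\Pr\bigl((A_n-1)\beta\log\rho-\log n\leq y\bigr)=(1-e^{-y}/n)^n\longrightarrow\exp(-e^{-y})$$
as $n\to\infty$. The standardization \eqref{eq:AGumbel} follows from Fact~\ref{fact:linGumbel}, and the asymptotic moments from the $\mathrm{Gumbel}(\mu,\sigma)$ formulas in Table~\ref{tab:distributions}. The whole argument is routine, and the only real bookkeeping hazard is keeping track of the polygamma shifts in the moment computation so that the variance and skewness emerge in exactly the stated form.
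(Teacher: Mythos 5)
Your proposal is correct and takes essentially the same route as the paper: the paper's proof of this theorem is just a pointer to Theorem \ref{thm:ParetoCDF} and Propositions \ref{prop:ParetoMGF-M}--\ref{prop:ParetoLim-M} (to be rerun with $\log\nu$ in place of $\log X_{\left(1\right)}$, equivalently with $n$ in place of $n-1$, since $F_{A_{n-1}}=F_{M_{n}}$), and you carry out exactly those computations, including the same CDF substitution for the Gumbel limit. One caveat: your cumulant calculation actually yields $\mathrm{Var}\left(A_{n}\right)=\bigl(\tfrac{\pi^{2}}{6}-\psi_{1}\left(n+1\right)\bigr)/\bigl(\beta^{2}\log^{2}\rho\bigr)=\bigl(\sum_{k=1}^{n}k^{-2}\bigr)/\bigl(\beta^{2}\log^{2}\rho\bigr)$, which differs from the printed (\ref{eq:VarA}) by $n^{-2}/\bigl(\beta^{2}\log^{2}\rho\bigr)$; the printed formula appears to carry a typo (the numerator of (\ref{eq:SkewA}) does include the matching $+2/n^{3}$ from the $\psi_{2}$ recurrence, while (\ref{eq:VarA}) and the denominator of (\ref{eq:SkewA}) omit the corresponding $+1/n^{2}$), so your claim that the recurrences land exactly on the stated forms is not literally true for the variance, though the discrepancy is asymptotically immaterial.
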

\begin{proof}
See Theorem \ref{thm:ParetoCDF} and Propositions \ref{prop:ParetoMGF-M}--\ref{prop:ParetoLim-M}.
Another approach to (\ref{eq:AGumbel}): the $\log_{\rho}\left(\nicefrac{X_{i}}{\nu}\right)$
are i.i.d.\ $\mathrm{Exp}\left(\beta\log\rho\right)$ and $Z_{\left(n\right)}-\log n\implies\mathrm{Gumbel}\left(0,1\right)$,
as $n\rightarrow\infty$, for $Z_{1},Z_{2},\ldots,Z_{n}$ i.i.d.\ $\mathrm{Exp}\left(1\right)$
(see page 83 of \citet{D05}).
\end{proof}
We note that $F_{M_{n}}\left(\mu\right)=F_{A_{n-1}}\left(\mu\right)$
(see (\ref{eq:ParetoFM}) and (\ref{eq:ParetoFA})): One loses a degree
of freedom in approximating $\nu$ with $X_{\left(1\right)}$. To
summarize, as $n\rightarrow\infty$, the $\mathrm{Exp}\left(\lambda\right)$
and $\mathrm{Pareto}\left(\nu,\beta\right)$ settings give similar
results, modulo $\nicefrac{1}{\beta}$ terms in $\mathrm{Pareto}\left(\nu,\beta\right)$
expressions, so that row-wise expressions below are asymptotically-equivalent:
\[
\begin{array}{rrl}
F_{\mathrm{Exp}\left(\lambda\right),M_{n}}^{-1}\left(q\right) & \beta F_{\mathrm{Pareto}\left(\nu,\beta\right),M_{n}}^{-1}\left(q\right) & \log_{\rho}n-\log_{\rho}\left(-\log q\right)\\
\mathbb{E}_{\mathrm{Exp}\left(\lambda\right)}M_{n} & \beta\mathbb{E}_{\mathrm{Pareto}\left(\nu,\beta\right)}M_{n} & \log_{\rho}n\\
\mathrm{Var}_{\mathrm{Exp}\left(\lambda\right)}\left(M_{n}\right) & \beta^{2}\mathrm{Var}_{\mathrm{Pareto}\left(\nu,\beta\right)}\left(M_{n}\right) & \left(\nicefrac{\pi}{\sqrt{6}\log\rho}\right)^{2}.
\end{array}
\]

\section{Accuracy \label{sec:Accuracy}}

Section \ref{sec:Introduction} presents two takes on accuracy: (\ref{eq:distSumBnj})
gives $n^{-1}\bar{B}_{n,k}\rightarrow F\left(\rho^{k}\right)$ w.p.1
as $n\rightarrow\infty$ while (\ref{eq:absError}) and following
argue that 
\[
\left|X_{\left(\left\lfloor 1+\left(n-1\right)q\right\rfloor \right)}-\hat{X}_{q}\right|\leq\epsilon X_{\left(\left\lfloor 1+\left(n-1\right)q\right\rfloor \right)},
\]
for $\hat{X}_{q}$ the exponential histogram estimate and $0<\epsilon<1$.
The former presents the proximity of an estimate and a population
quantity; the latter presents the proximity of two estimates. We extend
the former view of accuracy. In particular, we present well-known
confidence bands for CDF $F$ and suggest that these provide information
about $F^{-1}$ (see Figure \ref{fig:ConfBands}). Noting that organizations
are often interested in extreme quantiles, we also approximate the
largest and second largest quantile one can estimate under $\mathrm{Exp}\left(\lambda\right)$
and $\mathrm{Pareto}\left(\nu,\beta\right)$ sampling.

\begin{figure}[!t]
\subfloat[DKW confidence bands for $\mathrm{Exp}\left(1\right)$ data.]{\hfill{}\includegraphics[scale=0.405]{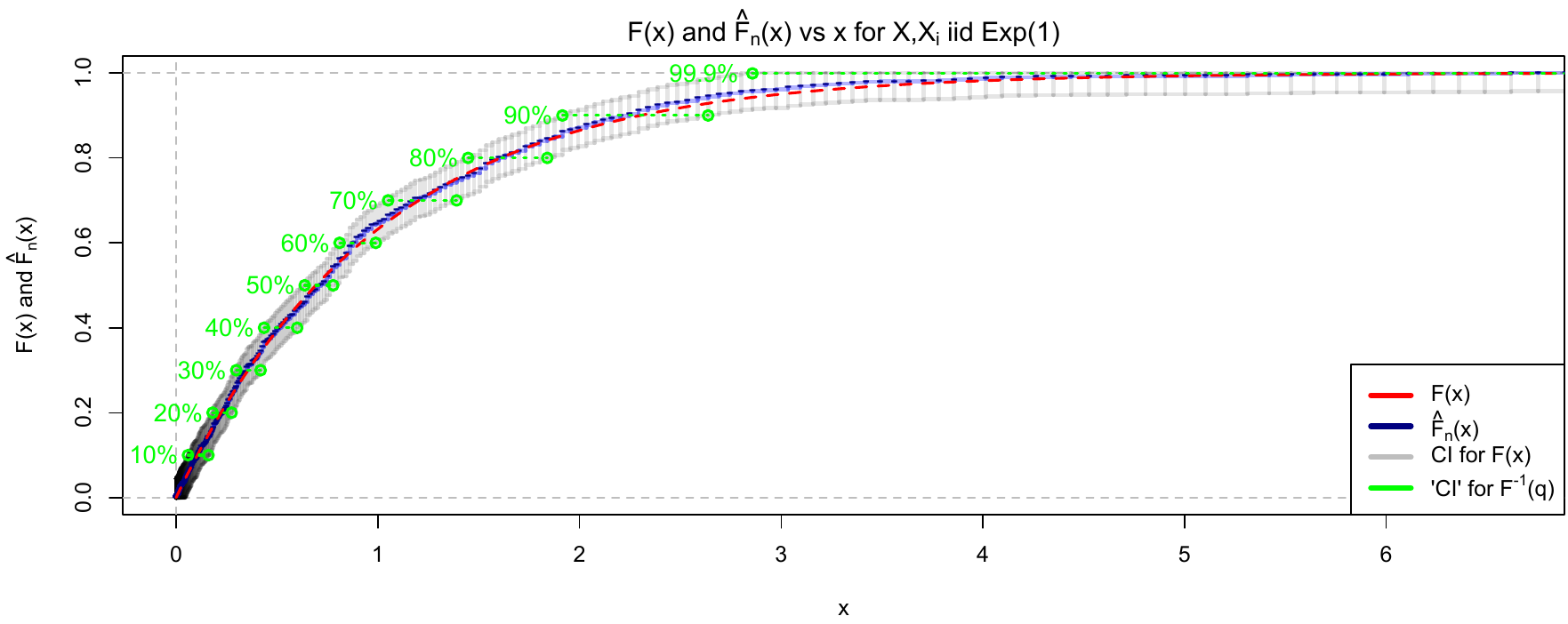}\hfill{}

}

\subfloat[DKW confidence bands for $\mathrm{Pareto}\left(1,1\right)$ data.]{\hfill{}\includegraphics[scale=0.405]{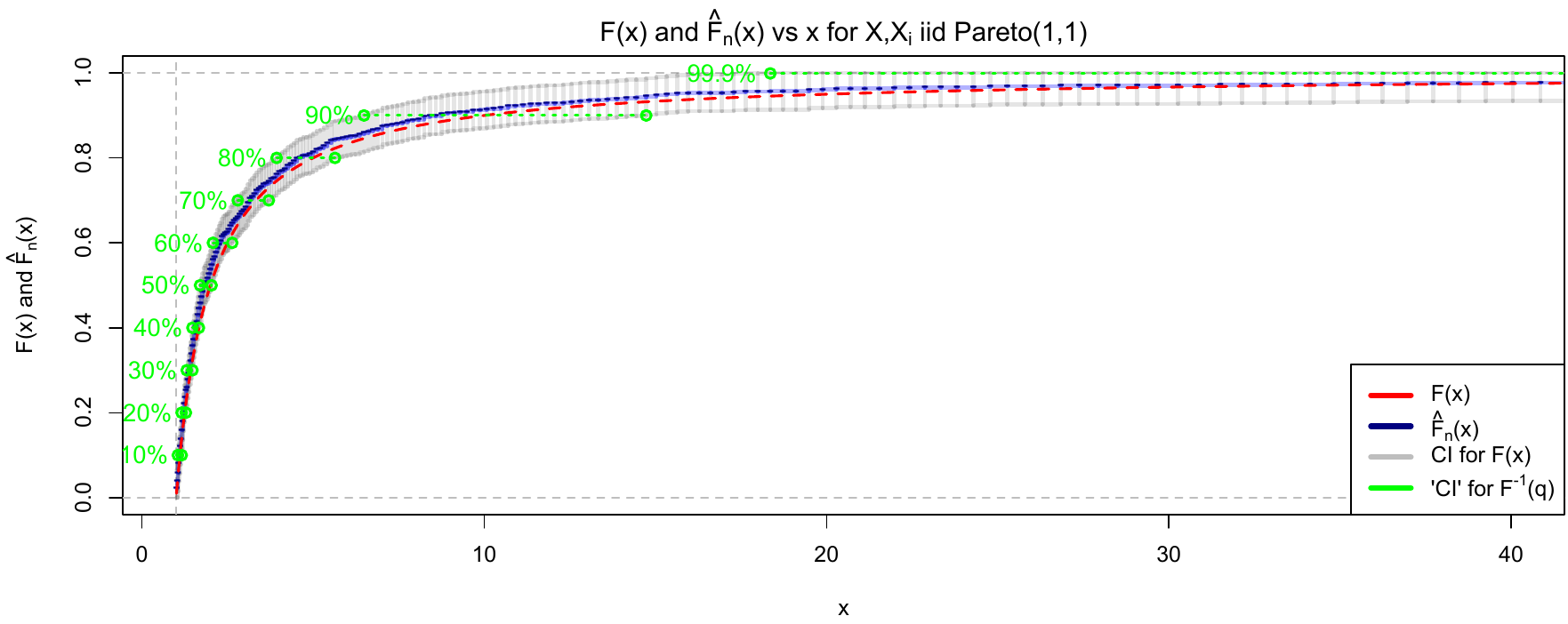}\hfill{}

}

\caption{DKW bands for $\mathrm{Exp}\left(1\right)$ and $\mathrm{Pareto}\left(1,1\right)$
data. Panels show: the CDF in red; the empirical CDF for $n=1000$
simulated data points in navy; the DKW confidence bands in gray; and
inferred quantile intervals in green. The inferred intervals for $q=0.999$
continue to $+\infty$. We use $\alpha=0.05$ and $\rho=\frac{1.01}{0.99}$.}

\label{fig:ConfBands}
\end{figure}

For the empirical CDF $\hat{F}_{n}\left(x\right)\coloneqq\frac{1}{n}\sum_{i=1}^{n}\mathbf{1}_{\left\{ X_{i}\leq x\right\} }$
and $\delta>0$, the Dvoretzky-Kiefer-Wolfowitz (DKW) inequality gives
\[
\Pr\left(\sup_{x\in\mathbb{R}}\left|\hat{F}_{n}\left(x\right)-F\left(x\right)\right|>\delta\right)\leq2e^{-2n\delta^{2}}
\]
when $X_{1},\ldots,X_{n}\stackrel{\mathrm{iid}}{\sim}F$ (\citet{DKW56,M90}).
With $\sup_{x\in\mathbb{R}}$ inside $\Pr\left(\cdot\right)$, the
DKW inequality gives $1-\alpha$ confidence bands for $\left(F\left(x\right)\right)_{x\in\mathbb{R}}$,
namely, $\hat{F}_{n}\left(x\right)\pm\sqrt{\left.\log\left(\nicefrac{2}{\alpha}\right)\right/\left(2n\right)}$,
which become wider (more narrow) as $\alpha\rightarrow0^{+}$ (as
$n\rightarrow\infty$). Observing that 
\[
\left\{ k\in\mathbb{Z}:\left|\frac{1}{n}\bar{B}_{n,k}-F\left(\rho^{k}\right)\right|>\delta\right\} \subset\left\{ x\in\mathbb{R}:\left|\hat{F}_{n}\left(x\right)-F\left(x\right)\right|>\delta\right\} ,
\]
we have 
\[
\Pr\left(\sup_{k\in\mathbb{Z}}\left|\frac{1}{n}\bar{B}_{n,k}-F\left(\rho^{k}\right)\right|>\delta\right)\leq\Pr\left(\sup_{x\in\mathbb{R}}\left|\hat{F}_{n}\left(x\right)-F\left(x\right)\right|>\delta\right),
\]
so that $\frac{1}{n}\bar{B}_{n,k}\pm\sqrt{\left.\log\left(\nicefrac{2}{\alpha}\right)\right/\left(2n\right)}$
gives $1-\alpha$ confidence bands for $\left(F\left(\rho^{k}\right)\right)_{k\in\mathbb{Z}}$. 

Figure \ref{fig:ConfBands} shows 95\% confidence bands for $\mathrm{Exp}\left(1\right)$
and $\mathrm{Pareto}\left(1,1\right)$. Both circumscribe their CDF,
an outcome one expects 9025 times out of 10,000. What do such bands
tell us about $F^{-1}$? While these are not confidence bands for
$F^{-1}$, they are illustrative, \emph{e.g.}, we expect an interval
for $F^{-1}\left(0.999\right)$ to look like $\left(C,\infty\right)$.
With a DKW-type inequality for 
\[
\int_{-\infty}^{\infty}\left|\hat{F}_{n}\left(x\right)-F\left(x\right)\right|dx=\int_{0}^{1}\left|\hat{F}_{n}^{-1}\left(q\right)-F^{-1}\left(q\right)\right|dq
\]
(something like \citet{SM96,JK83}) one could derive confidence bands
for both $\left(F\left(x\right)\right)_{x\in\mathbb{R}}$ and $\left(F^{-1}\left(q\right)\right)_{q\in\left[0,1\right]}$.\footnote{\citet{R00,ABBRS13} give confidence bands for the quantile function
in the i.i.d.\ normal setting. If we thought the data were normal,
we could use these.}

\subsection{The Exponential Setting}

Organizations guard against extreme events, \emph{e.g.}, high latency,
high error rate, extreme heat, and excessive traffic. Say latency
has an exponential distribution. We consider the following: If an
organization uses an exponential histogram to store latency data,
will it be able to accurately estimate extreme quantiles like $F^{-1}\left(0.99999\right)$?
The concern here is not accuracy per se, but estimability.

We bound both the expected mass to the right of the histogram and
the expected mass in the right-most bin. That is, for final bin $\left\lceil J_{n}\right\rceil $,
we bound
\begin{equation}
p_{\mathrm{last}}\coloneqq\Pr\left(\rho^{\left\lceil J_{n}\right\rceil -1}<X\leq\rho^{\left\lceil J_{n}\right\rceil }\right)\quad\textrm{and}\quad p_{\mathrm{tail}}\coloneqq\Pr\left(X>\rho^{\left\lceil J_{n}\right\rceil }\right),\label{eq:plast-ptail}
\end{equation}
for $X,X_{1},X_{2},\ldots,X_{n}\stackrel{\mathrm{iid}}{\sim}\mathrm{Exp}\left(\lambda\right)$
and $J_{n}\coloneqq\log_{\rho}X_{\left(n\right)}$.

\begin{restatable}{theorem}{bndexpprob}

\label{thm:BndExpProb}For $X,X_{1},X_{2},\ldots,X_{n}\stackrel{\mathrm{iid}}{\sim}\mathrm{Exp}\left(\lambda\right)$,
$J_{n}\coloneqq\log_{\rho}X_{\left(n\right)}$, and $p_{\mathrm{last}}$
and $p_{\mathrm{tail}}$ in (\ref{eq:plast-ptail}), we have
\begin{gather}
p_{\mathrm{last}}\leq nB\left(1+\nicefrac{1}{\rho},n\right)-nB\left(1+\rho,n\right)\sim\frac{\Gamma\left(1+\nicefrac{1}{\rho}\right)}{n^{\nicefrac{1}{\rho}}}-\frac{\Gamma\left(1+\rho\right)}{n^{\rho}}\label{eq:ExpLastProb}\\
\frac{\Gamma\left(1+\rho\right)}{n^{\rho}}\sim nB\left(1+\rho,n\right)\leq p_{\mathrm{tail}}\leq\frac{1}{n+1}\sim\frac{1}{n},\label{eq:ExpRest}
\end{gather}
where the limits hold as $n\rightarrow\infty$.

\end{restatable}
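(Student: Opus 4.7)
The plan is to condition on the random bin edge $\rho^{\lceil J_n\rceil}$, which is a function of the sample, while exploiting that $X$ is independent of $X_{1},\ldots,X_{n}$. By the tower property and the exponential survival function $\Pr(X>u)=e^{-\lambda u}$,
\[
p_{\mathrm{tail}}=\mathbb{E}\bigl[\Pr(X>\rho^{\lceil J_{n}\rceil}\mid X_{1},\ldots,X_{n})\bigr]=\mathbb{E}\bigl[e^{-\lambda\rho^{\lceil J_{n}\rceil}}\bigr],
\]
and similarly $p_{\mathrm{last}}=\mathbb{E}[e^{-\lambda\rho^{\lceil J_{n}\rceil-1}}-e^{-\lambda\rho^{\lceil J_{n}\rceil}}]$. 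Both quantities thus reduce to Laplace-type expectations of functions of $X_{(n)}=\rho^{J_{n}}$.

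To shed the ceiling I would use the sandwich $J_{n}\leq\lceil J_{n}\rceil\leq J_{n}+1$, which exponentiates to $X_{(n)}\leq\rho^{\lceil J_{n}\rceil}\leq\rho X_{(n)}$ and $X_{(n)}/\rho\leq\rho^{\lceil J_{n}\rceil-1}\leq X_{(n)}$. Since $u\mapsto e^{-\lambda u}$ is decreasing and non-negative, taking expectations yields
\[
\mathbb{E}[e^{-\lambda\rho X_{(n)}}]\leq p_{\mathrm{tail}}\leq\mathbb{E}[e^{-\lambda X_{(n)}}],\qquad p_{\mathrm{last}}\leq\mathbb{E}[e^{-\lambda X_{(n)}/\rho}]-\mathbb{E}[e^{-\lambda\rho X_{(n)}}].
\]

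The central computation is then the Laplace transform of $X_{(n)}$. I would invoke the R\'enyi representation $X_{(n)}\stackrel{\mathcal{L}}{=}\lambda^{-1}\sum_{k=1}^{n}E_{k}/k$ with $E_{k}$ i.i.d.\ $\mathrm{Exp}(1)$, which gives
\[
\mathbb{E}[e^{-tX_{(n)}}]=\prod_{k=1}^{n}\frac{k}{k+t/\lambda}=\frac{\Gamma(n+1)\,\Gamma(1+t/\lambda)}{\Gamma(n+1+t/\lambda)}=nB\bigl(1+t/\lambda,\,n\bigr)
\]
for $t>-\lambda$. Specializing at $t\in\{\lambda,\lambda\rho,\lambda/\rho\}$ delivers $\mathbb{E}[e^{-\lambda X_{(n)}}]=1/(n+1)$, $\mathbb{E}[e^{-\lambda\rho X_{(n)}}]=nB(1+\rho,n)$, and $\mathbb{E}[e^{-\lambda X_{(n)}/\rho}]=nB(1+1/\rho,n)$, which substitute directly into the closed-form bounds of \eqref{eq:ExpLastProb}--\eqref{eq:ExpRest}.

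For the asymptotics I would apply Stirling in the form $\Gamma(n+1)/\Gamma(n+1+c)\sim n^{-c}$ as $n\to\infty$, giving $nB(1+c,n)\sim\Gamma(1+c)/n^{c}$, and note $1/(n+1)\sim 1/n$. The main obstacle I anticipate is simply identifying the Laplace transform of $X_{(n)}$ with a Beta expression via the R\'enyi representation and keeping the Gamma-function bookkeeping tidy; once that is in hand, the bounds follow from nothing more than the monotonicity of $u\mapsto e^{-\lambda u}$ and standard Gamma asymptotics. Note that exponentiality of $X$ enters only through its survival function, whereas the clean closed forms on the right-hand sides specifically exploit the exponential distribution of the $X_{i}$.
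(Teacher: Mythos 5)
Your proposal is correct and follows essentially the same route as the paper: sandwich $\rho^{\lceil J_n\rceil}$ between $\rho^{J_n}$ and $\rho^{J_n+1}$, reduce $p_{\mathrm{last}}$ and $p_{\mathrm{tail}}$ to Laplace-transform expectations of $X_{(n)}$, identify these with $nB\left(1+t/\lambda,n\right)$, and finish with $B\left(c,n\right)\sim\Gamma\left(c\right)n^{-c}$. The only cosmetic difference is that you derive the Beta identity via the R\'enyi representation, whereas the paper integrates directly against the density of $J_n$ with the substitution $y=e^{-\lambda\rho^{x}}$.
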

\begin{proof}
The proof appears in Appendix \ref{sec:Accuracy-Proofs}.
\end{proof}

\begin{table}
\hfill{}%
\begin{tabular}{r|lr@{\extracolsep{0pt}.}l|lr@{\extracolsep{0pt}.}l}
 & \multicolumn{3}{c|}{$\mathrm{Exp}\left(\lambda\right)$} & \multicolumn{3}{c}{$\mathrm{Pareto}\left(\nu,1\right)$}\tabularnewline
$\log_{10}n$ & $p_{\mathrm{last}}$ & \multicolumn{2}{c|}{$q_{\mathrm{max}}$} & $p_{\mathrm{last}}$ & \multicolumn{2}{c}{$q_{\mathrm{max}}$}\tabularnewline
\hline 
1 & $7.3\times10^{-3}$ & 0&9127160 & $3.6\times10^{-3}$ & 0&9108911\tabularnewline
2 & $1.7\times10^{-3}$ & 0&9909028 & $4.0\times10^{-4}$ & 0&9902951\tabularnewline
3 & $2.6\times10^{-4}$ & 0&9991236 & $4.0\times10^{-5}$ & 0&9990208\tabularnewline
4 & $3.5\times10^{-5}$ & 0&9999163 & $4.0\times10^{-6}$ & 0&9999020\tabularnewline
5 & $4.5\times10^{-6}$ & 0&9999920 & $4.0\times10^{-7}$ & 0&9999902\tabularnewline
6 & $5.4\times10^{-7}$ & 0&9999992 & $4.0\times10^{-8}$ & 0&9999990\tabularnewline
7 & $6.4\times10^{-8}$ & 0&9999999 & $4.0\times10^{-9}$ & 0&9999999\tabularnewline
8 & $7.3\times10^{-9}$ & 1&0000000 & $4.0\times10^{-10}$ & 1&0000000\tabularnewline
\end{tabular}\hfill{}

\caption{Rounded upper bounds for $p_{\mathrm{last}}$ and $q_{\mathrm{max}}\protect\coloneqq1-p_{\mathrm{tail}}$
(see (\ref{eq:plast-ptail})) when $\rho=\frac{1.01}{0.99}\approx1.0202$.
See (\ref{eq:ExpLastProb}) and (\ref{eq:ExpRest}) for $\mathrm{Exp}\left(\lambda\right)$
and (\ref{eq:ParTailRest}) for $\mathrm{Pareto}\left(\nu,\beta\right)$.}

\label{tab:tailMass}
\end{table}

Table \ref{tab:tailMass} bounds the expected mass of the last bin
and the largest quantile we can hope to estimate under $\mathrm{Exp}\left(\lambda\right)$
sampling when $\rho=\frac{1.01}{0.99}$ and $1\leq\log_{10}n\leq8$.
We see that $p_{\mathrm{last}}\lesssim\nicefrac{1}{n}$ and $q_{\mathrm{max}}\coloneqq1-p_{\mathrm{tail}}\lesssim1-\nicefrac{1}{n}$.
For $\rho$ fixed we have $p_{\mathrm{last}}=O\left(n^{\nicefrac{-1}{\rho}}\right)$
and $p_{\mathrm{tail}}=\mathcal{O}\left(\nicefrac{1}{n}\right)$.

\subsection{The Pareto Setting}

We turn to the setting in which $X,X_{1},X_{2},\ldots,X_{n}\stackrel{\mathrm{iid}}{\sim}\mathrm{Pareto}\left(\nu,\beta\right)$.

\begin{restatable}{theorem}{bndparprob}

\label{thm:BndParProb}For $X,X_{1},X_{2},\ldots,X_{n}\stackrel{\mathrm{iid}}{\sim}\mathrm{Pareto}\left(\nu,\beta\right)$,
$J_{n}\coloneqq\log_{\rho}X_{\left(n\right)}$, and $p_{\mathrm{last}}$
and $p_{\mathrm{tail}}$ in (\ref{eq:plast-ptail}), we have
\begin{equation}
p_{\mathrm{last}}\leq\frac{\rho^{\beta}-\nicefrac{1}{\rho^{\beta}}}{n+1}\quad\textrm{and}\quad\frac{\nicefrac{1}{\rho^{\beta}}}{n+1}\leq p_{\mathrm{tail}}\leq\frac{1}{n+1}.\label{eq:ParTailRest}
\end{equation}

\end{restatable}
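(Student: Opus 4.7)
The plan is to exploit the Pareto self-similarity $\bar F(\rho x)=\rho^{-\beta}\bar F(x)$, where $\bar F(x)=(\nu/x)^{\beta}$ for $x\ge\nu$, together with the sandwich $X_{(n)}\le\rho^{\lceil J_{n}\rceil}<\rho\,X_{(n)}$, which is immediate from $J_{n}\le\lceil J_{n}\rceil<J_{n}+1$. The single auxiliary fact I need is that with $U_{i}\coloneqq F(X_{i})\stackrel{\mathrm{iid}}{\sim}\mathrm{Uniform}(0,1)$,
\[
\mathbb{E}\!\left[(\nu/X_{(n)})^{\beta}\right]=\mathbb{E}[1-U_{(n)}]=\frac{1}{n+1}.
\]

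For $p_{\mathrm{tail}}$, I would condition on $X_{(n)}$ (and use the independence of $X$ from the sample) to write $p_{\mathrm{tail}}=\mathbb{E}[\bar F(\rho^{\lceil J_{n}\rceil})]=\mathbb{E}[(\nu/\rho^{\lceil J_{n}\rceil})^{\beta}]$, then apply the sandwich to obtain
\[
\rho^{-\beta}(\nu/X_{(n)})^{\beta}<(\nu/\rho^{\lceil J_{n}\rceil})^{\beta}\le(\nu/X_{(n)})^{\beta},
\]
and take expectations to conclude $\rho^{-\beta}/(n+1)\le p_{\mathrm{tail}}\le 1/(n+1)$.

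For $p_{\mathrm{last}}$, the Pareto difference identity gives
\[
F(\rho^{\lceil J_{n}\rceil})-F(\rho^{\lceil J_{n}\rceil-1})=\nu^{\beta}\bigl(\rho^{-\beta(\lceil J_{n}\rceil-1)}-\rho^{-\beta\lceil J_{n}\rceil}\bigr).
\]
Using $\rho^{-\beta(\lceil J_{n}\rceil-1)}\le\rho^{\beta}X_{(n)}^{-\beta}$ on the minuend and $\rho^{-\beta\lceil J_{n}\rceil}\ge\rho^{-\beta}X_{(n)}^{-\beta}$ on the subtrahend (both from the sandwich), the pointwise bound is $(\rho^{\beta}-\rho^{-\beta})\,\nu^{\beta}X_{(n)}^{-\beta}$, and taking expectations yields $p_{\mathrm{last}}\le(\rho^{\beta}-\rho^{-\beta})/(n+1)$.

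There is no real obstacle here: the proof boils down to Pareto self-similarity, the trivial ceiling inequalities, and one moment identity for the uniform maximum. The one minor point is to verify that $\rho^{\lceil J_{n}\rceil-1}\ge\nu$, so that the difference identity applies to both endpoints of the final bin; on the small event where this fails one has $F(\rho^{\lceil J_{n}\rceil-1})=0$ and the same bound continues to hold after a direct check, so the expectation argument is unaffected.
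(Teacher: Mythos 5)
Your proof is correct and takes essentially the same route as the paper: the paper bounds the ceiling via the containment $\left(\rho^{\lceil x\rceil-1},\rho^{\lceil x\rceil}\right]\subset\left(\rho^{x-1},\rho^{x+1}\right]$ (your sandwich $J_{n}\leq\lceil J_{n}\rceil<J_{n}+1$) and then evaluates $\mathbb{E}e^{tJ_{n}}$ at $t=-\beta\log\rho$, which is exactly your moment identity $\mathbb{E}\left[\left(\nu/X_{(n)}\right)^{\beta}\right]=\mathbb{E}\left[1-U_{(n)}\right]=\tfrac{1}{n+1}$ in different clothing. The only cosmetic difference is that you compute via the uniform maximum rather than the Beta-function form of the MGF from Theorem \ref{thm:Parato-A}, and you flag the harmless edge case $\rho^{\lceil J_{n}\rceil-1}<\nu$, which the paper leaves implicit.
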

\begin{proof}
The proof, which uses Theorem \ref{thm:Parato-A}, appears in Appendix
\ref{sec:Accuracy-Proofs}.
\end{proof}
Table \ref{tab:tailMass} bounds $p_{\mathrm{last}}$ and $q_{\mathrm{max}}$
under $\mathrm{Pareto}\left(\nu,1\right)$ sampling when $\rho=\frac{1.01}{0.99}$
and $1\leq\log_{10}n\leq8$. In this setting $p_{\mathrm{last}}\lesssim\nicefrac{4}{100n}$
and we again have $q_{\mathrm{max}}\lesssim1-\nicefrac{1}{n}$. The
mass of the last bin increases as $\beta$ or $\rho$ increase, \emph{i.e.},
as outliers become scarce or as bin sizes increase (so that $\epsilon$
increases). For $\beta$ and $\rho$ fixed, both $p_{\mathrm{last}}$
and $p_{\mathrm{tail}}$ are $\mathcal{O}\left(\nicefrac{1}{n}\right)$.

\section{Occupancy \label{sec:Occupancy}}

In this section and the next we adopt a new mental picture. Our histogram
now becomes an infinite (or, in $\mathrm{Exp}\left(\lambda\right)$'s
case, doubly-infinite) sequence of probability urns (or, buckets).
Each value $X_{n}\sim F$ becomes a ball thrown independently into
an urn, the $n$th ball falling into urn $k$ with probability 
\[
p_{k}\coloneqq\Pr\left(\rho^{k-1}<X\leq\rho^{k}\right),
\]
for $X\sim F$. While $\sum_{k\in\mathbb{Z}}p_{k}=1$, we let $Y_{i}\coloneqq\left\lceil \log_{\rho}X_{i}\right\rceil $,
for $i\geq1$, so that i.i.d.\ $Y_{i}\sim\left(p_{j}\right)$ summarize
the urn indices of the i.i.d.\ $X_{i}\sim F$. We say bin $k\in\mathbb{Z}$
is occupied at time $n\geq1$ if $B_{n,k}>0$.

Occupancy, 
\begin{equation}
K_{n}\coloneqq\sum_{j\in\mathbb{Z}}\mathbf{1}_{\left\{ B_{n,j}>0\right\} }=\sum_{j\in\mathbb{Z}}\sum_{r=1}^{\infty}\mathbf{1}_{\left\{ B_{n,j}=r\right\} }\eqqcolon\sum_{r=1}^{\infty}K_{n,r},
\end{equation}
the number of occupied bins, is bounded above by size, \emph{i.e.},
$K_{n}\leq M_{n}$. $K_{n,r}$ gives the number of bins that summarize
$r$ of the $X_{1},X_{2},\ldots,X_{n}$. Our goal in studying occupancy
(and gap sizes in §\ref{sec:Longest-Gap}) is to determine whether
one wastes a significant amount of space by storing (an amortized
superset of) the counts
\begin{equation}
\left\{ B_{n,Y_{\left(1\right)}},B_{n,Y_{\left(1\right)}+1},\ldots,B_{n,Y_{\left(n\right)}}\right\} .\label{eq:fullList}
\end{equation}
We approximate the number of non-zero $B_{n,k}$ in (\ref{eq:fullList}).
This section and the next confirm that, for $\mathrm{Exp}\left(\lambda\right)$
and $\mathrm{Pareto}\left(\nu,\beta\right)$, one does not waste significant
space in storing the full list of counts in (\ref{eq:fullList})---our
hunch going into this analysis.

Occupancy in the presence of an infinite number of urns is an area
of enduring concern in applied probability---work bookended by \citet{K67}'s
seminal insights and \citet{GHP07}'s excellent review. We review
two, key results below, which sections \ref{subsec:Occupancy-Exp}
and \ref{subsec:Occupancy-Par} then apply. Regarding $\mathbb{E}K_{n}$
we have:

\begin{restatable}{theorem}{karlin}

\label{thm:Karlin}For $Y_{i}$ i.i.d.\ with support $\mathbb{Z}_{+}\coloneqq\left\{ 1,2,\ldots\right\} $,
let $p_{k}\coloneqq\Pr\left(Y_{1}=k\right)$ and assume that $p_{1}\geq p_{2}\geq\cdots$.
For $x>0$ let $\alpha\left(x\right)\coloneqq\max\left\{ k\geq1:p_{k}\geq\nicefrac{1}{x}\right\} $.
If $\alpha$ is slowly-varying (\emph{i.e.}, $\alpha\left(cx\right)\sim\alpha\left(x\right)$
for each fixed $c>0$, as $x\rightarrow\infty$), then $\mathbb{E}K_{n}\sim\alpha\left(n\right)$,
as $n\rightarrow\infty$.

\end{restatable}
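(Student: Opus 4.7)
The plan is to prove this classical result of \citet{K67} via the standard Poissonization-plus-Tauberian argument. First, I write the expected occupancy as
\[
\mathbb{E}K_n = \sum_{k=1}^\infty \Pr(B_{n,k} > 0) = \sum_{k=1}^\infty \bigl(1-(1-p_k)^n\bigr),
\]
and introduce its Poissonized analogue
\[
\Phi(s) := \sum_{k=1}^\infty \bigl(1 - e^{-p_k s}\bigr).
\]
I would first show that $\mathbb{E}K_n = \Phi(n) + o(\alpha(n))$ as $n\to\infty$, bounding $|(1-p_k)^n - e^{-np_k}|\le np_k^2 e^{-(n-1)p_k}$ and splitting the sum at a threshold $p_k \lessgtr \log n/n$; the ``large-$p_k$'' piece has only $\mathcal{O}(\alpha(n/\log n))$ summands whose individual error tends to $0$, and the ``small-$p_k$'' piece is controlled by $n\sum_{p_k < \log n/n} p_k^2$, which slow variation of $\alpha$ forces to be $o(\alpha(n))$.

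The heart of the argument is to show $\Phi(s)\sim \alpha(s)$ as $s\to\infty$. Using $1-e^{-p_k s} = s\int_0^{p_k} e^{-us}\,du$ and Fubini, I write
\[
\Phi(s) = s\int_0^1 e^{-us}\,\nu(u)\,du,\qquad \nu(u):=\#\{k : p_k > u\},
\]
and substitute $u = t/s$ to obtain
\[
\Phi(s) = \int_0^s e^{-t}\,\nu(t/s)\,dt.
\]
Since $\nu(t/s)$ agrees with $\alpha(s/t)$ except at the countable set of $t$ where some $p_k = t/s$, I may replace $\nu(t/s)$ by $\alpha(s/t)$. Dividing by $\alpha(s)$ yields
\[
\frac{\Phi(s)}{\alpha(s)} = \int_0^\infty e^{-t}\,\frac{\alpha(s/t)}{\alpha(s)}\,\mathbf{1}_{\{t\le s\}}\,dt.
\]
By slow variation, the integrand converges pointwise in $t>0$ to $e^{-t}$, so the goal is to push the limit inside.

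The main obstacle is justifying this interchange: $\alpha(s/t)/\alpha(s)$ can blow up as $t\to 0^+$, and the indicator $\mathbf{1}_{\{t\le s\}}$ is delicate as $t\to s$ (where the approximation $\nu \leftrightarrow \alpha$ is loosest). I would invoke Potter's bounds for slowly varying functions: for every $\delta\in(0,1)$ there exist $C=C(\delta)$ and $s_0$ such that
\[
\frac{\alpha(s/t)}{\alpha(s)} \le C\,\max\bigl(t^{-\delta}, t^{\delta}\bigr)\quad\text{for all } s\ge s_0,\ t\in(0,s].
\]
Then the integrand is dominated by $C e^{-t}\max(t^{-\delta},t^{\delta})$, which is integrable; dominated convergence gives $\Phi(s)/\alpha(s)\to\int_0^\infty e^{-t}\,dt=1$. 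Combining with the Poissonization step yields $\mathbb{E}K_n \sim \alpha(n)$, as claimed. The monotonicity assumption $p_1\ge p_2\ge\cdots$ enters only to ensure that $\alpha$ is a well-defined, nondecreasing counting function, so that Potter's bounds apply cleanly.
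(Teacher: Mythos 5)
The paper does not prove this statement at all: its ``proof'' is a pointer to Theorem $1'$ in section 2 of Karlin (1967), so your self-contained argument is necessarily a different route. What you have written is essentially the standard modern proof of Karlin's theorem (it is close to the treatment in the Gnedin--Hansen--Pitman survey the paper cites): Poissonize, pass to the integral representation $\Phi(s)=\int_0^\infty e^{-t}\nu(t/s)\,dt$, identify $\nu(t/s)$ with $\alpha(s/t)$ off a null set, and close with Potter's bounds plus dominated convergence. The architecture is sound, and your remark that monotonicity of the $p_k$ is only needed to make $\alpha$ a genuine nondecreasing counting function (so that the Potter estimate extends to the range $1\leq s/t\leq s_0$ via $\alpha(s/t)\leq\alpha(s_0)$) is the right observation.

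One step needs repair: in the de-Poissonization you drop the factor $e^{-(n-1)p_k}$ and claim that $n\sum_{p_k<\log n/n}p_k^2=o(\alpha(n))$ because $\alpha$ is slowly varying. The crude bound on that sum is $n\cdot\frac{\log n}{n}\sum_k p_k=\log n$, and slow variation of $\alpha$ does \emph{not} force $\log n=o(\alpha(n))$ (take $\alpha(n)\asymp\log\log n$, i.e.\ doubly exponentially decaying $p_k$). The fix is to keep the exponential factor you already wrote down: since $np_k e^{-(n-1)p_k}\leq\frac{n}{e(n-1)}$ uniformly in $k$, one gets
\[
\sum_{k}\left|\left(1-p_k\right)^{n}-e^{-np_k}\right|\leq\sum_{k}np_k^{2}e^{-\left(n-1\right)p_k}\leq\frac{n}{e\left(n-1\right)}\sum_{k}p_k=\mathcal{O}\left(1\right),
\]
with no splitting needed. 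This is $o\left(\alpha\left(n\right)\right)$ whenever infinitely many $p_k$ are positive (so $\alpha\left(n\right)\rightarrow\infty$), and the finitely supported case is trivial since then both $\mathbb{E}K_{n}$ and $\alpha\left(n\right)$ converge to the number of positive $p_k$. With that substitution your proof is complete.
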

\begin{proof}
Please see Theorem $1'$ in section 2 of \citet{K67}. Put ``$\gamma=0.$''
\end{proof}
\citet{DGP18} gives finite-$n$ bounds for $\mathbb{E}K_{n}$. For
$\mathrm{Var}\left(K_{n}\right)$ we have:

\begin{restatable}{theorem}{bogachevetal}

\label{thm:Bogachev-etal}For $Y_{i}$ i.i.d.\ with support $\mathbb{Z}_{+}\coloneqq\left\{ 1,2,\ldots\right\} $,
let $p_{k}\coloneqq\Pr\left(Y_{1}=k\right)$ and assume that $p_{1}\geq p_{2}\geq\cdots$.
Let $v_{n}\coloneqq\mathrm{Var}\left(K_{n}\right)$. Then, 
\begin{enumerate}
\item \label{enu:Bogachev-etal-1.1}$\left(v_{n}\right)_{n=1}^{\infty}$
approaches a finite limit $\iff$ $\lim_{j\rightarrow\infty}\frac{p_{j+k}}{p_{j}}=\frac{1}{2}$
for some $k\geq1$, in which case $\lim_{n\rightarrow\infty}v_{n}=k$;
\item \label{enu:Bogachev-etal-1.2}$\left(v_{n}\right)_{n=1}^{\infty}$
is bounded $\iff$ $\limsup_{j\rightarrow\infty}\frac{p_{j+k}}{p_{j}}\leq\frac{1}{2}$
for some $k\geq1$. Then, 
\[
k_{\mathrm{min}}\coloneqq\inf\left\{ k\geq1:\limsup_{j\rightarrow\infty}\,\frac{p_{j+k}}{p_{j}}\leq\frac{1}{2}\right\} 
\]
provides an asymptotically sharp bound for $\limsup_{n\rightarrow\infty}v_{n}$.
\end{enumerate}
\end{restatable}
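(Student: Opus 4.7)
The plan is to follow the Poissonization/de-Poissonization strategy used in \citet{BGY08}; indeed, like the proof of Theorem \ref{thm:Karlin}, the ultimate step will be to cite their Theorems $1$ and $2$ directly, but here I sketch the underlying argument. First I would introduce the Poissonized model: let $N(t)\sim\mathrm{Poisson}(t)$ be independent of the $Y_i$, and let $K^{*}_{t}$ denote the occupancy count based on $Y_{1},\ldots,Y_{N(t)}$. A standard thinning argument shows that the Poissonized bin counts $B^{*}_{t,k}\sim\mathrm{Poisson}(tp_{k})$ are mutually independent across $k$, so that the indicators $\mathbf{1}\{B^{*}_{t,k}>0\}$ are independent Bernoulli$(1-e^{-tp_{k}})$ variables.

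Independence yields the clean formula
\[
v^{*}_{t}\coloneqq\mathrm{Var}(K^{*}_{t})=\sum_{k\geq 1} e^{-tp_{k}}\bigl(1-e^{-tp_{k}}\bigr).
\]
The core analytic step is to study $v^{*}_{t}$ as $t\to\infty$ under the two ratio conditions. Write $h(u)\coloneqq e^{-u}(1-e^{-u})$, which peaks near $u=\log 2$ and decays exponentially on either side. The hypothesis $p_{j+k}/p_{j}\to 1/2$ says that, up to a vanishing perturbation, the arguments $(tp_{j})_{j}$ differ by a factor of $1/2$ under the shift $j\mapsto j+k$. Since $h$ satisfies $h(u)+h(u/2)+h(u/4)+\cdots =1$ (telescoping via $h(u)=e^{-u}-e^{-2u}$ and geometric compression of the doubling points), partitioning the $j$-index into $k$ congruence classes modulo $k$ and letting $t\to\infty$ would show that each class contributes $1$ to $v^{*}_{t}$, giving $v^{*}_{t}\to k$. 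The $\limsup$ version for part (\ref{enu:Bogachev-etal-1.2}) follows by the analogous monotone comparison: the inequality $\limsup p_{j+k}/p_{j}\leq 1/2$ gives a summable tail, minimal $k$ giving the tightest constant.

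The second step is de-Poissonization: one must show $v_{n}=\mathrm{Var}(K_{n})$ inherits the limit or boundedness of $v^{*}_{n}$. Conditioning on $N(t)=m$ gives
\[
v^{*}_{t}=\mathbb{E}_{N(t)}v_{N(t)}+\mathrm{Var}_{N(t)}\mathbb{E}[K_{N(t)}\mid N(t)],
\]
and the second term behaves like $t\cdot(\mathbb{E}K_{n+1}-\mathbb{E}K_{n})^{2}$ at $n=t$, which under Karlin's slowly-varying hypothesis tends to $0$. A standard Poisson concentration argument smooths $\mathbb{E}_{N(t)}v_{N(t)}$ over an $\mathcal{O}(\sqrt{t})$ window around $t$ and, combined with monotonicity considerations for $v_{n}$, transfers the asymptotics from $v^{*}_{n}$ to $v_{n}$.

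The main obstacle is precisely the de-Poissonization: $v_{n}$ need not be monotone, and the usual depoissonization lemmas are tailored for quantities whose Poissonized version varies smoothly. \citet{BGY08} handle this by an ingenious finite-difference argument exploiting the specific structure of the occupancy indicators and the ratio hypothesis on $(p_{k})$. Rather than reproduce that analysis here, I would cite their Theorem $2$ (part \ref{enu:Bogachev-etal-1.1}) and Theorem $1$ (part \ref{enu:Bogachev-etal-1.2}) directly, since the result is used only as a black box in the applications of �\ref{subsec:Occupancy-Exp} and �\ref{subsec:Occupancy-Par}.
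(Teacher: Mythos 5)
Your proposal ultimately does exactly what the paper does: it defers to Bogachev, Gnedin and Yakubovich (2008) (the paper's proof is the one-line citation of their Theorems 1.1 and 1.2), so the two approaches coincide, with your Poissonization sketch serving as a faithful account of the mechanism behind the cited result. One small correction to that sketch: the telescoping identity you need is the two-sided sum $\sum_{i\in\mathbb{Z}}h\left(2^{i}u\right)=1$ for $h\left(u\right)=e^{-u}-e^{-2u}$, since the one-sided sum $h\left(u\right)+h\left(\nicefrac{u}{2}\right)+h\left(\nicefrac{u}{4}\right)+\cdots$ equals $1-e^{-2u}$, not $1$.
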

\begin{proof}
Please refer to Theorems 1.1 and 1.2 of \citet{BGY08}.
\end{proof}

\subsection{The Exponential Setting \label{subsec:Occupancy-Exp}}

In applying Theorems \ref{thm:Karlin}--\ref{thm:Bogachev-etal}
to $\mathrm{Exp}\left(\lambda\right)$ data, we encounter the following
mismatch: these theorems call for a singly-infinite sequence of urns
with decreasing probabilities while $\mathrm{Exp}\left(\lambda\right)$
produces a doubly-infinite sequence of urns with increasing and then
decreasing probabilities.\footnote{$\Pr\left(\rho^{k-1}<X\leq\rho^{k}\right)$ is unimodal $\impliedby-\log_{\rho}X\sim\mathrm{Gumbel}\left(\log_{\rho}\lambda,\frac{1}{\log\rho}\right)$,
$X\sim\mathrm{Exp}\left(\lambda\right)$.} For $X\sim\mathrm{Exp}\left(\lambda\right)$, $x>0$, and
\begin{align}
\kappa_{\lambda,\rho}\coloneqq1+\log_{\rho}\left(\frac{\log\rho}{\lambda\left(\rho-1\right)}\right) & =\underset{-\infty<y<\infty}{\arg\max}\Pr\left(\rho^{y-1}<X\leq\rho^{y}\right)\label{eq:kappa}\\
 & =\underset{-\infty<y<\infty}{\arg\max}\left\{ \exp\left(-\lambda\rho^{y-1}\right)-\exp\left(-\lambda\rho^{y}\right)\right\} ,
\end{align}
we therefore split $F\left(x\right)=\Pr\left(X\leq x\right)$ into
two parts:
\begin{align*}
F\left(x\right) & =F_{0}\left(x\right)\Pr\left(X\leq\kappa_{\lambda,\rho}\right)+F_{1}\left(x\right)\Pr\left(X>\kappa_{\lambda,\rho}\right)\\
 & =F_{0}\left(x\right)\left(1-\rho^{-\frac{\rho}{\rho-1}}\right)+F_{1}\left(x\right)\rho^{-\frac{\rho}{\rho-1}},
\end{align*}
where
\begin{align}
F_{0}\left(x\right) & \coloneqq\left\{ \begin{array}{ll}
0 & \textrm{ if }x\leq0\\
\frac{1-e^{-\lambda x}}{1-\rho^{-\frac{\rho}{\rho-1}}} & \textrm{ if }0<x<\frac{\rho\log\rho}{\lambda\left(\rho-1\right)}\\
1 & \textrm{ if }x\geq\frac{\rho\log\rho}{\lambda\left(\rho-1\right)}
\end{array}\right.\label{eq:F0}\\
F_{1}\left(x\right) & \coloneqq\left\{ \begin{array}{ll}
0 & \textrm{ if }x\leq\frac{\rho\log\rho}{\lambda\left(\rho-1\right)}\\
1-e^{-\lambda\left(x-\frac{\rho\log\rho}{\lambda\left(\rho-1\right)}\right)} & \textrm{ if }x>\frac{\rho\log\rho}{\lambda\left(\rho-1\right)}.
\end{array}\right.\label{eq:F1}
\end{align}
We analyze $F_{0}$ and $F_{1}$ separately and then stitch the results
so-obtained back together. Regarding $\mathbb{E}K_{n}$ we have:

\begin{restatable}{proposition}{karlinexp}

\label{prop:KarlinExp}Note that:
\begin{enumerate}
\item \label{enu:E0Kn}If $X_{1},X_{2},\ldots,X_{n}\stackrel{\mathrm{iid}}{\sim}F_{0}$
in (\ref{eq:F0}), then, as $n\rightarrow\infty$,
\[
\mathbb{E}K_{n}\sim\log_{\rho}\left(\frac{n\log\rho}{1-\rho^{-\frac{\rho}{\rho-1}}}\right)+\frac{1}{\log\rho}W_{0}\left(-\frac{\left(1-\rho^{-\frac{\rho}{\rho-1}}\right)\sqrt{\rho}}{\left(\rho-1\right)n}\right)\sim\log_{\rho}n.
\]
\item \label{enu:E1Kn}If $X_{1},X_{2},\ldots,X_{n}\stackrel{\mathrm{iid}}{\sim}F_{1}$
in (\ref{eq:F1}) and $\eta_{\rho,n}\coloneqq\log\left(n\log\rho\right)+\frac{\rho\log\rho}{\rho-1}$,
then, as $n\rightarrow\infty$,
\begin{align*}
\mathbb{E}K_{n} & \sim\log_{\rho}\left(-\frac{\rho-1}{\log\rho}W_{-1}\left(-\frac{\rho^{-\frac{\rho}{\rho-1}}}{n\log\rho}\right)\right)\\
 & \leq\log_{\rho}\left(\frac{\rho-1}{\log\rho}\left(\eta_{\rho,n}+\sqrt{2\left(\eta_{\rho,n}-1\right)}\right)\right)\sim\log_{\rho}\log_{\rho}n.
\end{align*}
\item \label{enu:EKn}If $X_{1},X_{2},\ldots,X_{n}\stackrel{\mathrm{iid}}{\sim}\mathrm{Exp}\left(\lambda\right)$,
then 
\[
\mathbb{E}K_{n}\left\{ \begin{array}{l}
\approx\mathbb{E}_{0}K_{n_{0}}+\mathbb{E}_{1}K_{n_{1}}\\
\lesssim\log_{\rho}n\textrm{ as }n\rightarrow\infty,
\end{array}\right.
\]
where ``$\mathbb{E}_{b}$'' indicates expectation with respect to
$F_{b}$ in (\ref{eq:F0}) or (\ref{eq:F1}) and 
\[
n_{0}\coloneqq n\times\left(1-\rho^{-\frac{\rho}{\rho-1}}\right)\textrm{ and }n_{1}\coloneqq n\times\rho^{-\frac{\rho}{\rho-1}}
\]
give the expected numbers of values below and above cutoff $\rho^{\kappa_{\lambda,\rho}}=\frac{\rho\log\rho}{\lambda\left(\rho-1\right)}$.
\end{enumerate}
\end{restatable}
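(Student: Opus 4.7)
The plan is to apply Karlin's theorem (Theorem~\ref{thm:Karlin}) separately to each of the two truncations $F_{0}$ and $F_{1}$ and then combine the two estimates via a Binomial splitting to recover the full $\mathrm{Exp}(\lambda)$ case. Since the bin probabilities $\Pr(\rho^{k-1}<X\le\rho^{k})$ under $\mathrm{Exp}(\lambda)$ are unimodal with mode near $\kappa_{\lambda,\rho}$, $F_{0}$ and $F_{1}$ are precisely engineered so that each yields a probability sequence that decreases monotonically once the bins are relabeled outward from $\kappa_{\lambda,\rho}$, which is exactly the hypothesis of Karlin's theorem.

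For Part~\ref{enu:E0Kn}, I would start from $p_{k}^{(0)}=(e^{-\lambda\rho^{k-1}}-e^{-\lambda\rho^{k}})/(1-\rho^{-\rho/(\rho-1)})$, supported on $k\le\lceil\kappa_{\lambda,\rho}\rceil$, and solve $p_{k}^{(0)}=1/x$ for the critical bin. Approximating the integral $\int_{\rho^{k-1}}^{\rho^{k}}\lambda e^{-\lambda t}\,dt$ at the geometric midpoint $\rho^{k-1/2}$ and substituting $u\coloneqq\lambda\rho^{k-1/2}$ produces the transcendental equation $ue^{-u}=(1-\rho^{-\rho/(\rho-1)})\sqrt{\rho}/((\rho-1)x)$, whose small-$u$ root is $u=-W_{0}(\,\cdot\,)$ by definition. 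Unwinding the substitution together with $\kappa_{\lambda,\rho}=1+\log_{\rho}(\log\rho/(\lambda(\rho-1)))$ delivers $\alpha(x)=\max\{j:q_{j}\ge 1/x\}$ in exactly the stated $\log_{\rho}+W_{0}$ form; slow variation holds because the $W_{0}$ correction vanishes as $x\to\infty$, so $\alpha(x)\sim\log_{\rho}x$ and Karlin's theorem then gives the result.

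Part~\ref{enu:E1Kn} is parallel but uses $p_{k}^{(1)}=\rho^{\rho/(\rho-1)}(e^{-\lambda\rho^{k-1}}-e^{-\lambda\rho^{k}})$, supported on $k\ge\lceil\kappa_{\lambda,\rho}\rceil$. I would approximate the integral in the $y=\log t$ variable at its left endpoint, giving $p_{k}^{(1)}\approx\rho^{\rho/(\rho-1)}(\log\rho)\lambda\rho^{k-1}e^{-\lambda\rho^{k-1}}$; setting this to $1/x$ with $u=\lambda\rho^{k-1}$ yields $ue^{-u}=\rho^{-\rho/(\rho-1)}/(x\log\rho)$. Because bin index $k$ is now large, the relevant root is $u=-W_{-1}(\,\cdot\,)$, producing the stated Lambert-$W_{-1}$ expression for $\alpha(x)$. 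The explicit upper bound comes from the Chatzigeorgiou-type inequality $-W_{-1}(-e^{-\eta})\le\eta+\sqrt{2(\eta-1)}$ applied at $\eta=\eta_{\rho,n}=-\log(\rho^{-\rho/(\rho-1)}/(n\log\rho))$. Slow variation follows from $-W_{-1}(-\epsilon)\sim\log(1/\epsilon)$ as $\epsilon\to 0^{+}$, so $\alpha(x)\sim\log_{\rho}\log_{\rho}x$ and Karlin delivers $\mathbb{E}K_{n}\sim\log_{\rho}\log_{\rho}n$.

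For Part~\ref{enu:EKn}, I would decompose by the mixture $\mathrm{Exp}(\lambda)=(1-\rho^{-\rho/(\rho-1)})F_{0}+\rho^{-\rho/(\rho-1)}F_{1}$: with $N_{0}\sim\mathrm{Bin}(n,1-\rho^{-\rho/(\rho-1)})$ counting values below $\rho^{\kappa_{\lambda,\rho}}$ and $N_{1}=n-N_{0}$ above, conditionally the two sub-samples are independent and i.i.d.\ from $F_{0}$ and $F_{1}$ respectively. Since the supports of $F_{0}$ and $F_{1}$ meet in at most a single bin, $K_{n}=K_{N_{0}}^{(0)}+K_{N_{1}}^{(1)}+O(1)$; taking expectations and using concentration of $N_{b}$ around $n_{b}$ (Binomial fluctuations are $O(\sqrt{n})$ and vanish after a $\log_{\rho}$) gives $\mathbb{E}K_{n}\approx\mathbb{E}_{0}K_{n_{0}}+\mathbb{E}_{1}K_{n_{1}}\lesssim\log_{\rho}n_{0}+\log_{\rho}\log_{\rho}n_{1}\sim\log_{\rho}n$ via Parts~\ref{enu:E0Kn} and~\ref{enu:E1Kn}. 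The main obstacle will be identifying the right elementary approximation of $p_{k}^{(\cdot)}$ in each of the two settings so that exact inversion reproduces the stated Lambert-$W$ arguments (geometric midpoint for $F_{0}$, log-scale left endpoint for $F_{1}$) and then verifying that the $O(1)$ errors from those approximations, from rounding $\kappa_{\lambda,\rho}$ to an integer, and from replacing $N_{b}$ by $n_{b}$ are all absorbed by the $\sim$.
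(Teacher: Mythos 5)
Your proposal matches the paper's proof essentially step for step: the paper likewise applies Karlin's theorem to $F_{0}$ and $F_{1}$ separately after approximating each bin mass by a density-times-width evaluation at the geometric midpoint (via the mean value theorem), inverts the resulting $ue^{-u}=c/x$ equations to get the $W_{0}$ and $W_{-1}$ expressions, invokes Chatzigeorgiou's bound $-W_{-1}\left(-e^{-\eta}\right)\leq\eta+\sqrt{2\left(\eta-1\right)}$, and recombines via conditioning on $N_{n}\sim\mathrm{Binomial}\left(n,1-\rho^{-\frac{\rho}{\rho-1}}\right)$. The only cosmetic difference is in part \ref{enu:EKn}, where the paper bounds $\mathbb{E}\left[\log_{\rho}\left(N_{n}+1\right)\right]$ and $\mathbb{E}\left[\log_{\rho}\left(\log_{\rho}\left(n-N_{n}+1\right)+1\right)\right]$ by Jensen's inequality (concavity) rather than by your binomial-concentration argument; both suffice for the stated $\lesssim\log_{\rho}n$.
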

\begin{proof}
The proof, which uses Theorem \ref{thm:Karlin}, appears in Appendix
\ref{sec:Occupancy-Proofs}.
\end{proof}
\begin{figure}[!t]
\subfloat[Occupancy counts and expected counts. Box plots show simulated $K_{n}$
for $10^{3}$ data sets with $1\protect\leq\log_{10}n\protect\leq8$.
Red curves use the full approximations of $\mathbb{E}K_{n}$ in Proposition
\ref{prop:KarlinExp}. Blue curves use summary approximations $\log_{\rho}n$
(outer panels) and $\log_{\rho}\log_{\rho}n$ (central panel).]{\hfill{}\includegraphics[scale=0.47]{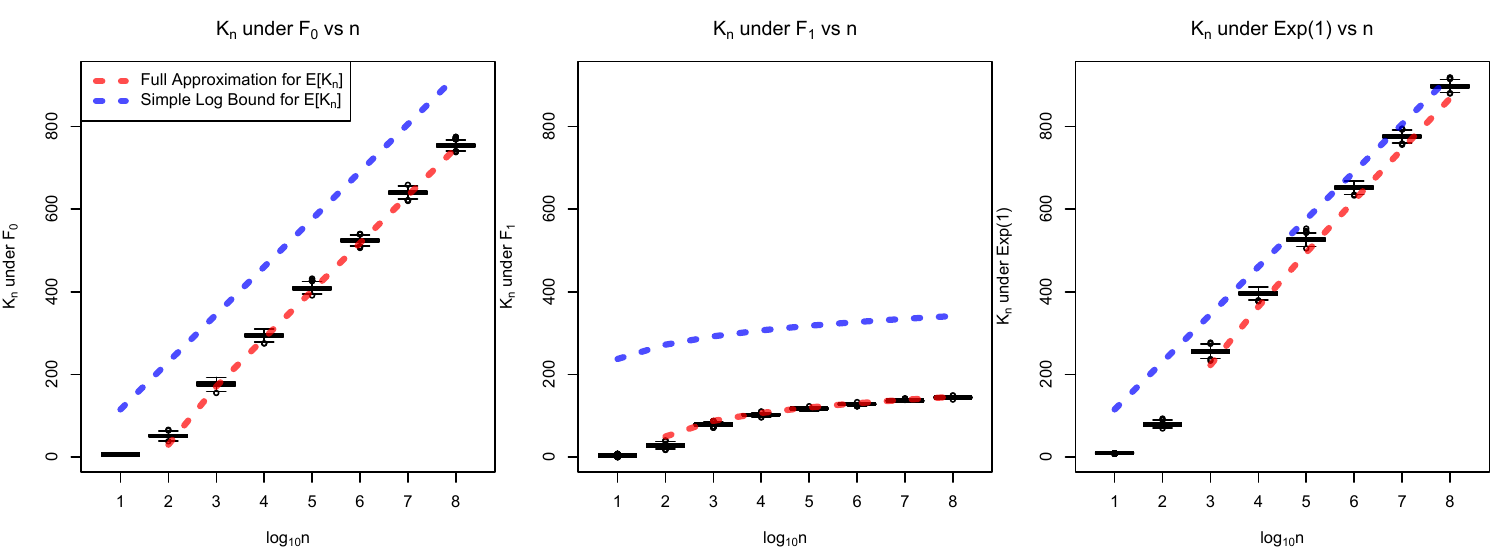}\hfill{}

\label{sub-fig:expKandEK}}

\subfloat[The variance of occupancy counts. Green curves give the variances
of box plots in Figure \ref{sub-fig:expKandEK}. Gray lines give bounds
$\left(\left\lceil \log_{\rho}2\right\rceil ,1,\left\lceil \log_{\rho}2\right\rceil +1\right)=\left(35,1,36\right)$
from Proposition \ref{prop:BogachevExp}.]{\hfill{}\includegraphics[scale=0.47]{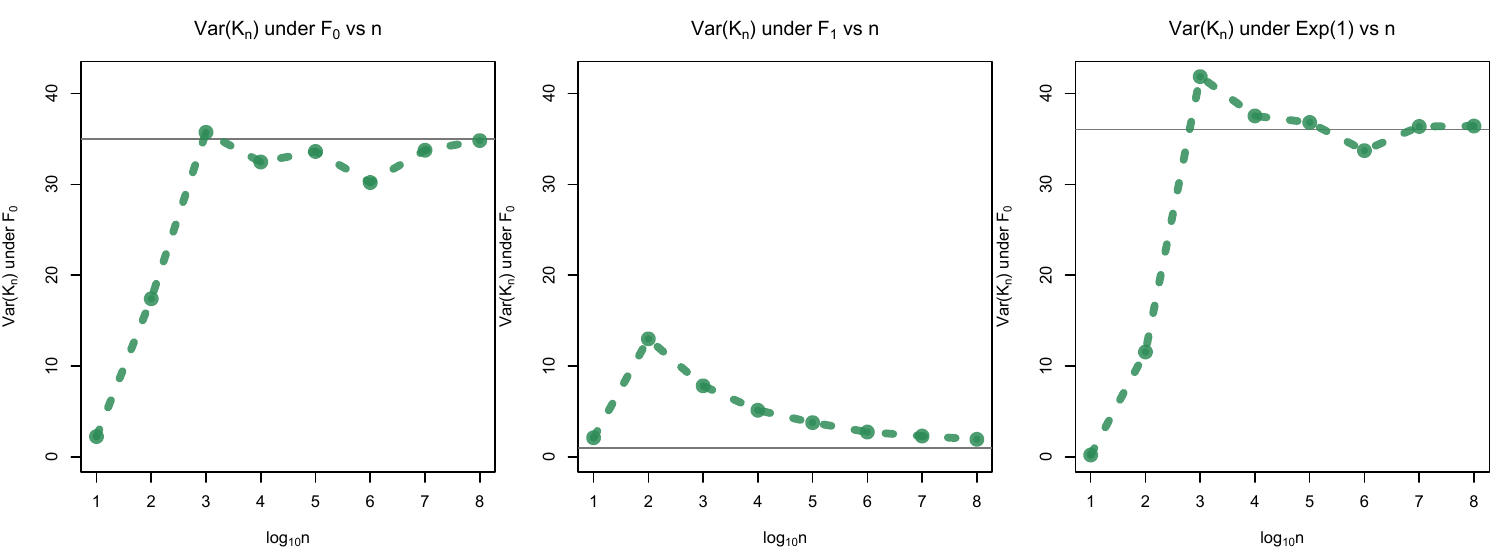}\hfill{}

\label{sub-fig:expVarKn}}

\caption{Occupancy in the $\mathrm{Exp}\left(1\right)$ setting (Propositions
\ref{prop:KarlinExp}--\ref{prop:BogachevExp}). Left panels sample
from $F_{0}$ in (\ref{eq:F0}) with $\lambda=1$; middle panels sample
from $F_{1}$ in (\ref{eq:F1}) with $\lambda=1$; right panels sample
from $\mathrm{Exp}\left(1\right)$. Histograms use $\rho=\nicefrac{1.01}{0.99}\approx1.02$.}
\end{figure}

Figure \ref{sub-fig:expKandEK} shows good correspondence between
our approximations of $\mathbb{E}K_{n}$ and simulated occupancy counts,
especially under $F_{0}$ and $F_{1}$. Under $F$, approximations
$\mathbb{E}_{0}K_{n_{0}}+\mathbb{E}_{1}K_{n_{1}}$ and $\log_{\rho}n$
under- and overshoot occupancy. Regarding $\mathrm{Var}\left(K_{n}\right)$
we have:

\begin{restatable}{proposition}{bogachevexp}

\label{prop:BogachevExp}Letting $\mathbb{Z}_{+}\coloneqq\left\{ 1,2,\ldots\right\} $,
we note that:
\begin{enumerate}
\item \label{enu:Var0Kn}If $X_{1},X_{2},\ldots,X_{n}\stackrel{\mathrm{iid}}{\sim}F_{0}$
in (\ref{eq:F0}), then
\begin{align*}
\log_{\rho}2\in\mathbb{Z}_{+} & \implies\lim_{n\rightarrow\infty}\mathrm{Var}\left(K_{n}\right)=\log_{\rho}2\\
\log_{\rho}2\in\left.\mathbb{R}\right\backslash \mathbb{Z}_{+} & \implies\limsup_{n\rightarrow\infty}\mathrm{Var}\left(K_{n}\right)\leq\left\lceil \log_{\rho}2\right\rceil .
\end{align*}
\item \label{enu:Var1Kn}If $X_{1},X_{2},\ldots,X_{n}\stackrel{\mathrm{iid}}{\sim}F_{1}$
in (\ref{eq:F1}), then $\limsup_{n\rightarrow\infty}\mathrm{Var}\left(K_{n}\right)\leq1$.
\item \label{enu:VarKn}If $X_{1},X_{2},\ldots,X_{n}\stackrel{\mathrm{iid}}{\sim}\mathrm{Exp}\left(\lambda\right)$,
then, as $n\rightarrow\infty$, 
\[
\mathrm{Var}\left(K_{n}\right)\left\{ \begin{array}{l}
\approx\mathrm{Var}_{0}\left(K_{n_{0}}\right)+\mathrm{Var}_{1}\left(K_{n_{1}}\right)\\
\lesssim\log_{\rho}^{2}n,
\end{array}\right.
\]
where $\mathrm{Var}_{b}\left(X\right)\coloneqq\mathbb{E}_{b}\left[\left(X-\mathbb{E}_{b}X\right)\right]$
takes the variance using $F_{b}$ in (\ref{eq:F0}) or (\ref{eq:F1})
and 
\[
n_{0}\coloneqq n\times\left(1-\rho^{-\frac{\rho}{\rho-1}}\right)\textrm{ and }n_{1}\coloneqq n\times\rho^{-\frac{\rho}{\rho-1}}
\]
give the expected numbers of values below and above cutoff $\rho^{\kappa_{\lambda,\rho}}=\frac{\rho\log\rho}{\lambda\left(\rho-1\right)}$.
Note finally that $\limsup_{n\rightarrow\infty}\left\{ \mathrm{Var}_{0}\left(K_{n_{0}}\right)+\mathrm{Var}_{1}\left(K_{n_{1}}\right)\right\} \leq\left\lceil \log_{\rho}2\right\rceil +1$.
\end{enumerate}
\end{restatable}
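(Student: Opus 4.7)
The plan is to apply Theorem \ref{thm:Bogachev-etal} to the bin probabilities under $F_0$ and $F_1$ separately, after reindexing them in decreasing order as that theorem requires, and then combine the two pieces through a conditioning argument for the mixture $F = (1-\rho^{-\rho/(\rho-1)})F_0 + \rho^{-\rho/(\rho-1)}F_1$.

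For part \ref{enu:Var0Kn}, the bin probabilities $p_k$ of $F_0$ are nondecreasing in $k$ up through $k=\lfloor\kappa_{\lambda,\rho}\rfloor$, with a possibly-truncated bin at $\lceil\kappa_{\lambda,\rho}\rceil$. Reindex $\tilde{p}_j\coloneqq p_{\lceil\kappa_{\lambda,\rho}\rceil-j+1}$ so $\tilde{p}_1\geq\tilde{p}_2\geq\cdots$ as Theorem \ref{thm:Bogachev-etal} demands. Expanding $e^{-\lambda\rho^m}$ to first order for $m\to-\infty$ gives $p_m\sim\lambda(\rho-1)\rho^{m-1}/(1-\rho^{-\rho/(\rho-1)})$, whence
\[
\lim_{j\to\infty}\frac{\tilde{p}_{j+k}}{\tilde{p}_j}=\rho^{-k}.
\]
If $\log_\rho 2\in\mathbb{Z}_+$, then taking $k=\log_\rho 2$ makes this limit exactly $\tfrac{1}{2}$, and Theorem \ref{thm:Bogachev-etal}(\ref{enu:Bogachev-etal-1.1}) yields $\mathrm{Var}(K_n)\to\log_\rho 2$. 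Otherwise $\lceil\log_\rho 2\rceil$ is the smallest integer $k$ with $\rho^{-k}<\tfrac{1}{2}$, so Theorem \ref{thm:Bogachev-etal}(\ref{enu:Bogachev-etal-1.2}) gives the claimed bound.

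For part \ref{enu:Var1Kn}, $F_1$'s bin probabilities are already in decreasing order. Writing $c\coloneqq\rho^{\kappa_{\lambda,\rho}}$, bin $\kappa_{\lambda,\rho}+j$ carries probability $e^{-\lambda(\rho^{\kappa+j-1}-c)}-e^{-\lambda(\rho^{\kappa+j}-c)}$, whose second term is smaller than the first by a factor $e^{-\lambda(\rho-1)\rho^{\kappa+j-1}}\to 0$ super-exponentially. Hence $p_{j+k}/p_j\to 0$ as $j\to\infty$ for every fixed $k\geq 1$, so $k_{\min}=1$ in Theorem \ref{thm:Bogachev-etal}(\ref{enu:Bogachev-etal-1.2}) and $\limsup\mathrm{Var}(K_n)\leq 1$.

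For part \ref{enu:VarKn}, I would condition on $N_0\coloneqq|\{i\leq n:X_i\leq\rho^{\kappa_{\lambda,\rho}}\}|\sim\mathrm{Binomial}(n,1-\rho^{-\rho/(\rho-1)})$ and decompose $K_n=K_n^{(0)}+K_n^{(1)}$ into occupancies of bins below and above (one side of) $\rho^{\kappa_{\lambda,\rho}}$. Conditional on $N_0$, the two pieces are independent, distributed as the occupancies of $N_0$ draws from $F_0$ and $n-N_0$ draws from $F_1$. The law of total variance then gives
\[
\mathrm{Var}(K_n)=\mathbb{E}\bigl[\mathrm{Var}_0(K_{N_0})+\mathrm{Var}_1(K_{N_1})\bigr]+\mathrm{Var}\bigl(\mathbb{E}_0 K_{N_0}+\mathbb{E}_1 K_{N_1}\bigr).
\]
Concentration of $N_0/n$ about $1-\rho^{-\rho/(\rho-1)}$, combined with the logarithmic growth of $\mathbb{E}_b K_{N_b}$ furnished by Proposition \ref{prop:KarlinExp} (a delta-method expansion about $n_b$), makes the second summand $O(1/n)$, yielding the claimed $\mathrm{Var}(K_n)\approx\mathrm{Var}_0(K_{n_0})+\mathrm{Var}_1(K_{n_1})$; parts \ref{enu:Var0Kn}--\ref{enu:Var1Kn} then deliver the $\lceil\log_\rho 2\rceil+1$ asymptotic bound. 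The crude inequality $\mathrm{Var}(K_n)\leq\mathbb{E}K_n^2\leq\mathbb{E}M_n^2\sim\log_\rho^2 n$, using $K_n\leq M_n$ and Proposition \ref{prop:asympExpM}, secures the stated $\log_\rho^2 n$ estimate.

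The main obstacle will be part \ref{enu:VarKn}: one must handle bin $\lceil\kappa_{\lambda,\rho}\rceil$, which may receive samples from both $F_0$ and $F_1$, without double-counting occupancy, and verify that the $\mathrm{Var}(\mathbb{E}[\,\cdot\mid N_0])$ term is truly negligible relative to the $O(1)$ conditional-variance pieces (uniform integrability as $N_b$ varies around $n_b$). Part \ref{enu:Var0Kn}'s ratio calculation is likewise subtly dependent on the truncated bin near the mode of $F_0$, though that dependence drops out in the $j\to\infty$ limit.
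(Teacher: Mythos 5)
Your proposal matches the paper's proof: parts \ref{enu:Var0Kn} and \ref{enu:Var1Kn} use exactly the same ratio computations ($p_{j-k}/p_{j}\rightarrow\rho^{-k}$ under $F_{0}$, $p_{j+k}/p_{j}\rightarrow0$ under $F_{1}$) fed into Theorem \ref{thm:Bogachev-etal}, and part \ref{enu:VarKn} uses the same conditioning on the binomial count of points below the cutoff together with the law of total variance. The only cosmetic differences are that the paper obtains the $\log_{\rho}^{2}n$ bound via a Jensen-type lemma for $\mathbb{E}\left[\log^{2}\left(X+1\right)\right]$ with $X$ binomial (Lemma \ref{lem:log2Binomial}) rather than via $K_{n}\leq M_{n}$, and it justifies the ``$\approx$'' through the idealized independent-subsamples scenario rather than through your delta-method estimate of $\mathrm{Var}\left(\mathbb{E}\left[\left.K_{n}\right|N_{n}\right]\right)$.
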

\begin{proof}
The proof, which uses Theorem \ref{thm:Bogachev-etal}, appears in
Appendix \ref{sec:Occupancy-Proofs}.
\end{proof}
Figure \ref{sub-fig:expVarKn} compares our approximations of $\limsup_{n\rightarrow\infty}\mathrm{Var}\left(K_{n}\right)$
with the variances of simulated occupancies. Simulations corroborate
our analytic results for $F_{0}$ and $F_{1}$, $\left\lceil \log_{\rho}2\right\rceil $
and 1. Simulations also show that our analytic result for $\mathrm{Exp}\left(\lambda\right)$,
$\log_{\rho}^{2}n$, is not tight. In fact, $\left\lceil \log_{\rho}2\right\rceil +1$,
the bound suggested by $\limsup_{n\rightarrow\infty}\left\{ \mathrm{Var}_{0}\left(K_{n_{0}}\right)+\mathrm{Var}_{1}\left(K_{n_{1}}\right)\right\} $,
works well. The variance result in Proposition \ref{prop:asympExpM}
provides corroborating evidence.

\begin{figure}[!t]
\hfill{}\subfloat[Under $F_{0}$ in (\ref{eq:F0}).]{\includegraphics[scale=0.33]{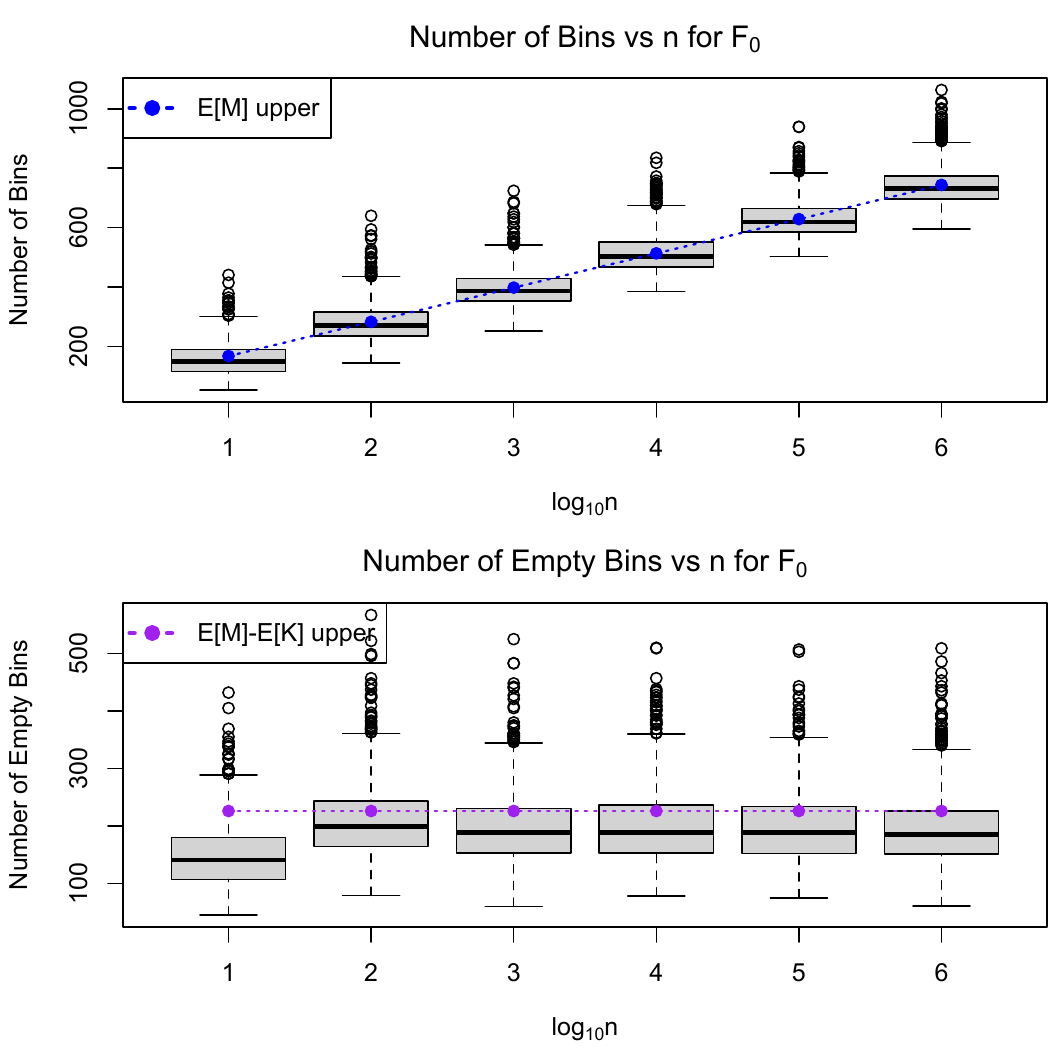}

}\subfloat[Under $F_{1}$ in (\ref{eq:F1}).]{\includegraphics[scale=0.33]{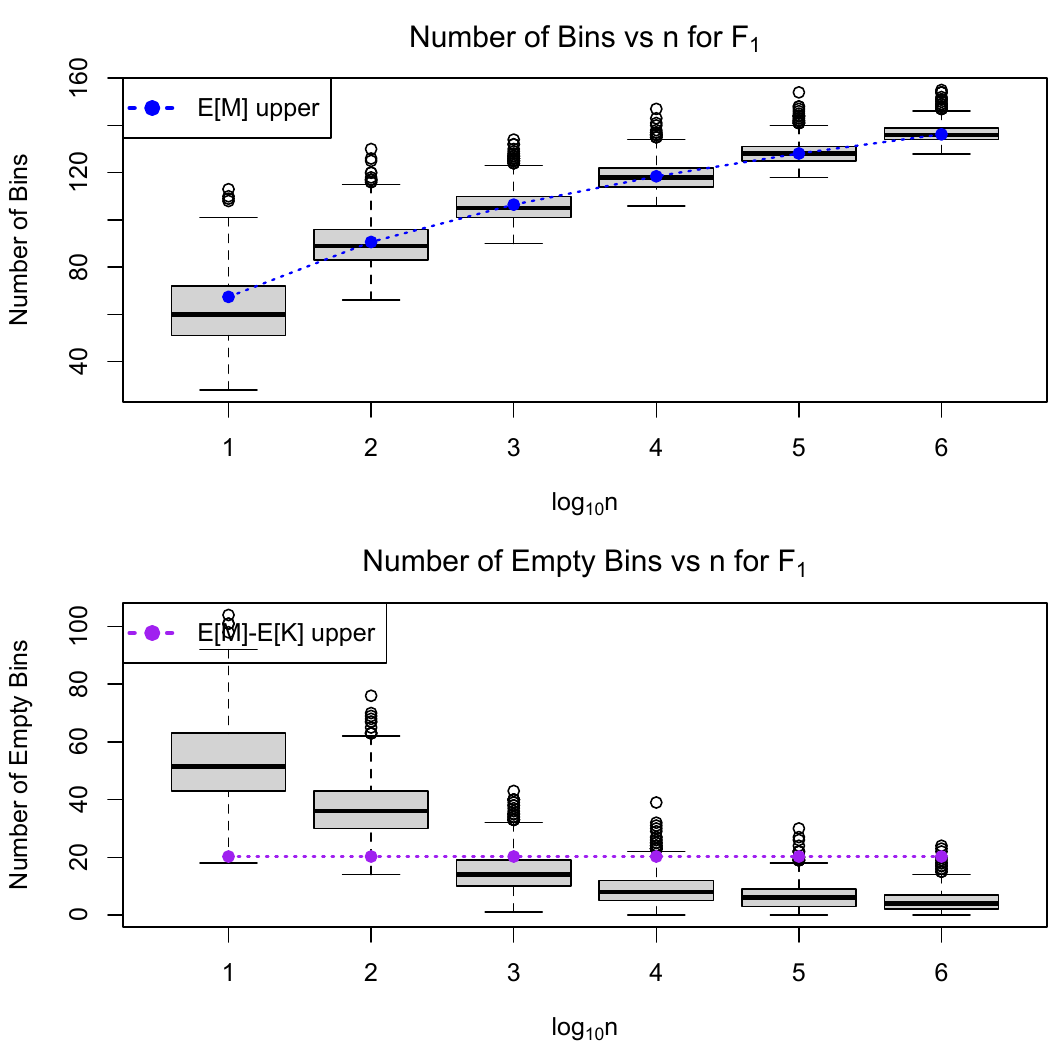}

}\hfill{}

\caption{The number of (empty) bins versus $1\protect\leq\log_{10}n\protect\leq6$
under $F_{0}$ and $F_{1}$ in (\ref{eq:F0}) and (\ref{eq:F1}).
We use $\rho=\frac{1.01}{0.99}$. See Theorem \ref{thm:bndExpMissing}
for the asymptotic bounds.}

\label{fig:emptyBins}
\end{figure}

Comparing Propositions \ref{prop:asympExpM} and \ref{prop:KarlinExp}
we see that both $\mathbb{E}M_{n}$ and $\mathbb{E}K_{n}$ are $\sim\log_{\rho}n$---an
asymptotically insignificant number of bins are empty. That said,
how many bins \emph{are} empty? The following bounds the mean number
as $n\rightarrow\infty$.

\begin{restatable}{theorem}{mnminuskn}

\label{thm:bndExpMissing}Let $\mathbb{E}_{b}$ represent expectation
with respect to $F_{b}$ in (\ref{eq:F0}) or (\ref{eq:F1}), $p_{0}\coloneqq1-\rho^{-\frac{\rho}{\rho-1}}$,
and $n\rightarrow\infty$. Then, for empty cell count $E_{n}\coloneqq M_{n}-K_{n}$,
\[
\begin{array}{lll}
\mathbb{E}_{0}M_{n}\lesssim\log_{\rho}\left(\frac{n\rho\log\rho}{\left(\rho-1\right)p_{0}}\right)+\frac{\gamma}{\log\rho}+1 & \implies & \mathbb{E}_{0}E_{n}\lesssim\log_{\rho}\left(\frac{\rho}{\rho-1}\right)+\frac{\gamma}{\log\rho}+1\\
\mathbb{E}_{1}M_{n}\lesssim\log_{\rho}\left(\frac{\left(\rho-1\right)\left(\gamma+\log n\right)}{\rho\log\rho}+1\right)+1 & \implies & \mathbb{E}_{1}E_{n}\lesssim\log_{\rho}\left(\frac{3}{2}\right).
\end{array}
\]
These results imply that $\mathbb{E}E_{n}\lesssim\log_{\rho}\left(\frac{3\rho}{2\left(\rho-1\right)}\right)+\frac{\gamma}{\log\rho}+1$
in the $\mathrm{Exp}\left(\lambda\right)$ setting.

\end{restatable}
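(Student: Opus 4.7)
The plan is to bound $\mathbb{E}_b M_n$ directly from $M_n = (\log X_{(n)} - \log X_{(1)})/\log\rho + 1$ for each of $b \in \{0,1\}$, and then to read off the claimed $\mathbb{E}_b E_n = \mathbb{E}_b M_n - \mathbb{E}_b K_n$ bounds by combining with the asymptotic occupancy estimates in Proposition \ref{prop:KarlinExp}. The combined $\mathrm{Exp}(\lambda)$ bound should follow from the mixture decomposition $F(x) = p_0 F_0(x) + (1-p_0) F_1(x)$: conditional on the sample split into $n_0$ values below and $n_1$ values above the cutoff $c \coloneqq \rho\log\rho/(\lambda(\rho-1)) = \rho^{\kappa_{\lambda,\rho}}$, the full-range minimum is drawn from the $F_0$-subsample and the maximum from the $F_1$-subsample, so empty bins partition across the cutoff and $\mathbb{E} E_n \lesssim \mathbb{E}_0 E_{n_0} + \mathbb{E}_1 E_{n_1}$, with $\mathbb{E} n_0 = np_0$ and $\mathbb{E} n_1 = n(1-p_0)$.

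For $F_0$, I would exploit the compact support: $X \leq c$ almost surely, so $\log X_{(n)} \leq \log c$. The probability integral transform gives $X_{(1)} \stackrel{\mathscr{L}}{=} F_0^{-1}(U_{(1)})$ with $U_{(1)} \sim \mathrm{Beta}(1,n)$ and $F_0^{-1}(u) = -\log(1-p_0 u)/\lambda$; combining this with the elementary inequality $-\log(1-x) \geq x$ on $[0,1)$ yields $X_{(1)} \geq p_0 U_{(1)}/\lambda$, so that $-\log X_{(1)} \leq \log(\lambda/p_0) - \log U_{(1)}$. Taking expectations and applying the standard identity $\mathbb{E}[-\log U_{(1)}] = H_n \sim \gamma + \log n$ delivers $\mathbb{E}_0 M_n \lesssim \log_\rho(nc\lambda/p_0) + \gamma/\log\rho + 1$, which is exactly the claim after substituting $c\lambda = \rho\log\rho/(\rho-1)$. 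Subtracting $\mathbb{E}_0 K_n \sim \log_\rho(n\log\rho/p_0)$ from Proposition \ref{prop:KarlinExp}(\ref{enu:E0Kn}) cancels the leading $\log_\rho n$ and leaves $\log_\rho(\rho/(\rho-1)) + \gamma/\log\rho + 1$.

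For $F_1$, the shifted support $[c,\infty)$ supplies the free bound $X_{(1)} \geq c$ almost surely; writing $X_{(n)} = c + Y_{(n)}$ with $Y_1, \ldots, Y_n \stackrel{\mathrm{iid}}{\sim} \mathrm{Exp}(\lambda)$, Jensen's inequality gives $\mathbb{E}_1 \log X_{(n)} \leq \log(c + \mathbb{E} Y_{(n)}) = \log(c + (\gamma + \psi(n+1))/\lambda)$, and factoring out $c$ produces the stated $\log_\rho((\rho-1)(\gamma+\log n)/(\rho\log\rho) + 1) + 1$ bound. To extract $\mathbb{E}_1 E_n \lesssim \log_\rho(3/2)$ I would combine this with the asymptotic $\mathbb{E}_1 K_n \sim \log_\rho\bigl(-\tfrac{\rho-1}{\log\rho} W_{-1}(-\rho^{-\rho/(\rho-1)}/(n\log\rho))\bigr)$ from Proposition \ref{prop:KarlinExp}(\ref{enu:E1Kn}), using the small-argument expansion $-W_{-1}(-y) = \log(1/y) + \log\log(1/y) + o(1)$ as $y \to 0^+$ to identify leading- and subleading-order terms.

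The main technical obstacle is precisely this last step: two $\log_\rho(\cdot)$ expressions with identical leading-order growth $\sim \log_\rho\log n$ must be subtracted with care, and one must track the $\log\log$-level corrections coming from $W_{-1}$ together with the additive $+1$ inside the $\mathbb{E}_1 M_n$ bound to confirm that the ratio of the two arguments tends to exactly $3/2$. A secondary issue is a rigorous justification of the mixture inequality $\mathbb{E} E_n \lesssim \mathbb{E}_0 E_{n_0} + \mathbb{E}_1 E_{n_1}$, which requires conditioning on the random split $(n_0, n_1) \sim \mathrm{Binomial}(n, p_0)$, invoking the two component bounds, and accounting for the (at most one) boundary bin straddling $c$; summing the two pieces then produces the combined $\log_\rho(3\rho/(2(\rho-1))) + \gamma/\log\rho + 1$ bound.
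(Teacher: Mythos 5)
Your overall architecture matches the paper's: bound $\mathbb{E}_{b}M_{n}$ using the support of $F_{b}$ (the $F_{0}$ maximum and the $F_{1}$ minimum are pinned at the cutoff $c=\rho\log\rho/(\lambda(\rho-1))$), subtract the occupancy asymptotics of Proposition \ref{prop:KarlinExp}, and recombine through the $\mathrm{Binomial}(n,p_{0})$ split $N_{n}$. Your derivations of the two $M_{n}$ bounds are a genuine (and arguably cleaner) variant: the paper instead proves Gumbel limit laws for $-\log(n\lambda X_{(1)}/p_{0})$ under $F_{0}$ and for $\lambda X_{(n)}-\lambda c-\log n$ under $F_{1}$ (Lemmas \ref{lem:transF0min} and \ref{lem:transF1max}) and then reads off the means, which tacitly requires convergence of expectations; your probability-integral-transform inequality $X_{(1)}\geq p_{0}U_{(1)}/\lambda$ together with $\mathbb{E}[-\log U_{(1)}]=H_{n}$, and your Jensen step for $\log X_{(n)}$, deliver the same constants as finite-$n$ inequalities.

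The step that would fail as you have planned it is the extraction of $\mathbb{E}_{1}E_{n}\lesssim\log_{\rho}(3/2)$. The constant $3/2$ does not come from the true expansion $-W_{-1}(-y)=\log(1/y)+\log\log(1/y)+o(1)$: with that expansion the argument of the $\mathbb{E}_{1}K_{n}$ logarithm is $\frac{\rho-1}{\log\rho}\left(\log n+\log\log n+O(1)\right)$ while the argument of the $\mathbb{E}_{1}M_{n}$ bound (after absorbing the trailing $+1$ into the logarithm) is $\frac{\rho-1}{\log\rho}\left(\log n+O(1)\right)$, so the ratio you propose to track tends to $1$, not $3/2$. The paper instead lower-bounds the occupancy via Chatzigeorgiou's explicit inequality $W_{-1}\left(-e^{-u-1}\right)\leq-1-\sqrt{2u}-\frac{2}{3}u$ (cited as \citet{C13}, applied with $u=\zeta_{\rho,n}$), and it is the leading coefficient $\frac{2}{3}$ of that bound that becomes the $\frac{3}{2}$ after taking $\log_{\rho}$ of the ratio of arguments. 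Your route, if pushed through, would actually give the stronger conclusion $\mathbb{E}_{1}E_{n}\lesssim0$, which still implies the stated inequality, but your stated verification target (``the ratio tends to exactly $3/2$'') is not what happens, and you have left precisely this step unresolved. One further caution that applies to your argument and the paper's alike: Proposition \ref{prop:KarlinExp} gives $\mathbb{E}_{1}K_{n}$ only up to asymptotic equivalence of quantities of order $\log\log n$, and $a_{n}\sim b_{n}$ with $b_{n}\rightarrow\infty$ does not by itself control $a_{n}-b_{n}$ to within a constant; turning the subtraction into an honest $O(1)$ bound on $\mathbb{E}_{1}E_{n}$ requires the explicit two-sided bounds rather than the $\sim$ statements alone.
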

\begin{proof}
The proof, which uses Proposition \ref{prop:KarlinExp}, appears in
Appendix \ref{sec:Occupancy-Proofs}.
\end{proof}
Note in particular that, with $\rho=\frac{1.01}{0.99}$, $\mathbb{E}_{0}E_{n}\lesssim226$
and $\mathbb{E}_{1}E_{n}\lesssim21$. Under $\mathrm{Exp}\left(\lambda\right)$,
as the number of summarized values grows large, the expected number
of empty bins is bounded above by 247 (see Figure \ref{fig:emptyBins}).
That said, bounds on the expected number of empty bins grow large
as $\rho\rightarrow1^{+}$.

\subsection{The Pareto Setting \label{subsec:Occupancy-Par}}

For $X_{i}\stackrel{\mathrm{iid}}{\sim}\mathrm{Pareto}\left(\nu,\beta\right)$,
we have bin indices $Y_{i}\stackrel{\mathrm{iid}}{\sim}\mathrm{Geometric}\left(1-\nicefrac{1}{\rho^{\beta}}\right)$,
so that:

\begin{restatable}{theorem}{paretoocc}

\label{thm:paretoOcc}For $\mathbb{Z}_{+}\coloneqq\left\{ 1,2,\ldots\right\} $,
$X_{1},X_{2},\ldots,X_{n}\stackrel{\mathrm{iid}}{\sim}\mathrm{Pareto}\left(\nu,\beta\right)$,
and empty bin count $E_{n}\coloneqq M_{n}-K_{n}$, we note that:
\begin{enumerate}
\item When $n\rightarrow\infty$:
\[
\mathbb{E}K_{n}\sim1+\frac{\log_{\rho}\left(\left(1-\nicefrac{1}{\rho^{\beta}}\right)n\right)}{\beta},\quad\mathbb{E}E_{n}\sim\frac{1}{\beta}\left[\log_{\rho}\left(\frac{1}{1-\nicefrac{1}{\rho^{\beta}}}\right)+\frac{\gamma}{\log\rho}\right].
\]
\item Letting $\xi_{\beta,\rho}\coloneqq\frac{1}{\beta}\log_{\rho}2$, we
have:
\begin{align*}
\xi_{\beta,\rho}\in\mathbb{Z}_{+} & \implies\lim_{n\rightarrow\infty}\mathrm{Var}\left(K_{n}\right)=\xi_{\beta,\rho}\\
\xi_{\beta,\rho}\in\mathbb{R}\left/\mathbb{Z}_{+}\right. & \implies\limsup_{n\rightarrow\infty}\mathrm{Var}\left(K_{n}\right)\leq\left\lceil \xi_{\beta,\rho}\right\rceil .
\end{align*}
\end{enumerate}
\end{restatable}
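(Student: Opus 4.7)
The plan is to reduce the problem to the canonical setup of Theorems \ref{thm:Karlin} and \ref{thm:Bogachev-etal} by identifying the bin-index distribution, then invoke those two theorems directly.

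First I would establish the distributional setup. Scaling by $\nu$ (i.e., setting $X_i' \coloneqq X_i/\nu \sim \mathrm{Pareto}(1,\beta)$) merely shifts every bin index by $\log_\rho \nu$ and leaves $K_n$, $E_n = M_n - K_n$, and $\mathrm{Var}(K_n)$ invariant, so I may assume $\nu = 1$. Then, for $k \geq 1$,
\[
p_k \coloneqq \Pr(\rho^{k-1} < X \leq \rho^k) = \rho^{-\beta(k-1)} - \rho^{-\beta k} = (1 - \rho^{-\beta})\rho^{-\beta(k-1)},
\]
so the bin indices $Y_i = \lceil \log_\rho X_i \rceil$ are i.i.d.\ $\mathrm{Geometric}(1 - \rho^{-\beta})$ on $\mathbb{Z}_+$ with $p_1 > p_2 > \cdots$, which is the input format required by both theorems.

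Next, for part 1 I would compute the counting function of Theorem \ref{thm:Karlin}: solving $(1 - \rho^{-\beta})\rho^{-\beta(k-1)} \geq 1/x$ yields
\[
\alpha(x) = \left\lfloor 1 + \tfrac{1}{\beta}\log_\rho\!\bigl((1 - \rho^{-\beta})\,x\bigr)\right\rfloor.
\]
Because $\alpha(cx) - \alpha(x) = O(1)$ while $\alpha(x) \to \infty$, $\alpha$ is slowly varying, and Theorem \ref{thm:Karlin} gives $\mathbb{E}K_n \sim \alpha(n) \sim 1 + \tfrac{1}{\beta}\log_\rho((1 - \rho^{-\beta})n)$. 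I would then obtain the expected number of empty bins by subtraction: using Proposition \ref{prop:ParetoMGF-M} and $\psi(n) \sim \log n$,
\[
\mathbb{E}E_n = \mathbb{E}M_n - \mathbb{E}K_n \sim \frac{\log n + \gamma}{\beta \log \rho} - \frac{\log n + \log(1 - \rho^{-\beta})}{\beta \log \rho} = \frac{1}{\beta}\left[\log_\rho\!\left(\tfrac{1}{1-\rho^{-\beta}}\right) + \frac{\gamma}{\log \rho}\right],
\]
which is the claimed expression.

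For part 2 I would apply Theorem \ref{thm:Bogachev-etal}. The geometric form makes the key ratio trivial: $p_{j+k}/p_j = \rho^{-\beta k}$, independent of $j$. Thus $\rho^{-\beta k} \leq \tfrac{1}{2}$ iff $k \geq \tfrac{1}{\beta}\log_\rho 2 = \xi_{\beta,\rho}$. If $\xi_{\beta,\rho} \in \mathbb{Z}_+$, then taking $k = \xi_{\beta,\rho}$ gives $\lim_{j\to\infty} p_{j+k}/p_j = 1/2$ exactly, so part \ref{enu:Bogachev-etal-1.1} of Theorem \ref{thm:Bogachev-etal} yields $\lim_{n\to\infty}\mathrm{Var}(K_n) = \xi_{\beta,\rho}$. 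Otherwise $k_{\min} = \lceil \xi_{\beta,\rho} \rceil$, and part \ref{enu:Bogachev-etal-1.2} yields the asymptotically sharp bound $\limsup_{n \to \infty} \mathrm{Var}(K_n) \leq \lceil \xi_{\beta,\rho} \rceil$.

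I expect no serious obstacle: the Pareto/geometric correspondence makes every quantity Theorems \ref{thm:Karlin}--\ref{thm:Bogachev-etal} require closed-form. The only mildly delicate step is confirming that $\alpha$ is slowly varying (a one-line check, since its growth is logarithmic), and verifying that the floor and the $O(1)$ error in $\psi(n) = \log n + O(1/n)$ are absorbed into the asymptotic equivalences.
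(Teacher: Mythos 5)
Your proposal is correct and follows essentially the same route as the paper: normalize away $\nu$, identify the bin indices as i.i.d.\ $\mathrm{Geometric}\left(1-\nicefrac{1}{\rho^{\beta}}\right)$, compute $\alpha\left(x\right)$ and apply Theorem \ref{thm:Karlin} for $\mathbb{E}K_{n}$, subtract from $\mathbb{E}M_{n}$ via Proposition \ref{prop:ParetoMGF-M} for $\mathbb{E}E_{n}$, and use the constant ratio $p_{j+k}/p_{j}=\rho^{-\beta k}$ with Theorem \ref{thm:Bogachev-etal} for the variance. The only differences are cosmetic (you rescale the data where the paper rescales the bins, and you spell out the slow-variation check the paper leaves implicit).
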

\begin{proof}
The proof, which uses Theorems \ref{thm:Karlin} and \ref{thm:Bogachev-etal},
appears in Appendix \ref{sec:Occupancy-Proofs}.
\end{proof}
The results given above mirror those for $\mathrm{Exp}\left(\lambda\right)$.
1.\ The number of empty bins is bounded as $n\rightarrow\infty$
(\emph{cf.}\ Theorem \ref{thm:bndExpMissing}). We expect 225 empty
bins when $\beta=1$ and $\rho=\frac{1.01}{0.99}$. 2.\ The expected
number of empty bins is not bounded in $\beta$ or $\rho$. If either
or both of these become smaller ($i.e.$, if we increase outlier magnitude
or histogram accuracy), we increase the expected number of occupied
\emph{and} empty bins. 3.\ Finally, $\mathrm{Var}_{\mathrm{Pareto}\left(\nu,\beta\right)}\left(K_{n}\right)$,
$\mathrm{Var}_{\mathrm{Pareto}\left(\nu,\beta\right)}\left(M_{n}\right)$
(Proposition \ref{prop:ParetoMGF-M}), and it appears $\mathrm{Var}_{\mathrm{Exp}\left(\lambda\right)}\left(K_{n}\right)$
(Figure \ref{sub-fig:expVarKn}) plateau as $n\rightarrow\infty$.

\section{Longest Gap \label{sec:Longest-Gap}}

While occupancy grows apace with size under $\mathrm{Exp}\left(\lambda\right)$
and $\mathrm{Pareto}\left(\nu,\beta\right)$ sampling (§\ref{sec:Occupancy}),
this section looks more closely at missingness. In particular we consider
\[
L_{n}\coloneqq\textrm{length of the longest gap in }\left\{ Y_{\left(1\right)},Y_{\left(1\right)}+1,\ldots,Y_{\left(n\right)}\right\} ,
\]
where a \emph{gap} is a contiguous sequence of empty bins. The largest
gap eats up only a fraction of the empty bins, \emph{e.g.}, $5\mathbb{E}L_{n}\approx\mathbb{E}E_{n}$
in the $\mathrm{Exp}\left(\lambda\right)$ and $\mathrm{Pareto}\left(\nu,1\right)$
settings when $\rho=\nicefrac{1.01}{0.99}$ (§\ref{subsec:Exp-length}--§\ref{subsec:Par-length}).

The excellent paper by \citet{GH09} shows that the asymptotic fate
of $L_{n}$ depends on and implies properties for $\left\{ \nicefrac{\Pr\left(Y_{1}\geq k+1\right)}{\Pr\left(Y_{1}\geq k\right)}\right\} _{k\geq0}$.

\begin{restatable}{theorem}{grubelhitczenko}

\label{thm:Grubel-Hitczenko}For $k\geq0$ and $Y_{1},Y_{2},\ldots$
i.i.d.\ with support $\mathbb{N}\coloneqq\left\{ 0,1,\ldots\right\} $,
let $p_{k}\coloneqq\Pr\left(Y_{1}=k\right)$ and $q_{k}\coloneqq\sum_{j=k}^{\infty}p_{j}=\Pr\left(Y_{1}\geq k\right)$.
Then, 
\begin{enumerate}
\item \label{enu:Grubel-Hitczenko-1}$\sum_{k=0}^{\infty}\frac{q_{k+1}}{q_{k}}<\infty\iff\Pr\left(\lim_{n\rightarrow\infty}L_{n}=0\right)=1$;
\item \label{enu:Grubel-Hitczenko-2}$\lim_{k\rightarrow\infty}\frac{q_{k+1}}{q_{k}}=0\iff\lim_{n\rightarrow\infty}\Pr\left(L_{n}=0\right)=1$; 
\item \label{enu:Grubel-Hitczenko-3}$\lim_{k\rightarrow\infty}\frac{q_{k+1}}{q_{k}}=1\implies\Pr\left(\lim_{n\rightarrow\infty}L_{n}=\infty\right)=1$.
\end{enumerate}
\end{restatable}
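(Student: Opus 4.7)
The plan is to control $L_n$ through the empty-bin probabilities $\Pr(\text{bin } k \text{ empty}) = (1-p_k)^n$, writing $p_k = q_k(1 - R_k)$ with $R_k := q_{k+1}/q_k$ so that the hazard ratios enter explicitly. The fundamental summand is the probability that bin $k$ is empty while the maximum exceeds it,
\[
\Pr\bigl(\text{bin } k \text{ empty},\, Y_{(n)} > k\bigr) = (1-p_k)^n - (1-q_k)^n,
\]
which for $nq_k$ of moderate size behaves like a function of $R_k$ (roughly $e^{-nq_k R_k}$ times a correction); the asymptotic fate of $L_n$ is then governed by the aggregate behavior of these quantities across $k$.

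For part \ref{enu:Grubel-Hitczenko-1}, the forward implication rests on the first Borel--Cantelli lemma applied to the events $\{L_n \geq 1\}$: a union bound gives $\Pr(L_n \geq 1) \leq \sum_{k \geq 0} \{(1-p_k)^n - (1-q_k)^n\}$, and summability of $R_k$ lets one show $\sum_n \Pr(L_n \geq 1) < \infty$, from which a.s.\ eventual gaplessness follows. The reverse implication requires a second-moment / second Borel--Cantelli argument establishing that empty bins at well-separated levels are nearly independent when $\sum R_k = \infty$. Part \ref{enu:Grubel-Hitczenko-2} then follows from the same sort of tail bound: the weaker hypothesis $R_k \to 0$ is exactly what drives $\sum_k \{(1-p_k)^n - (1-q_k)^n\} \to 0$ as $n \to \infty$, yielding convergence in probability, with the converse handled by an analogous lower bound on the probability that some specific high-mass level remains empty.

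Part \ref{enu:Grubel-Hitczenko-3} is the main obstacle. When $R_k \to 1$ the distribution of $Y_1$ is slowly varying in the tail, so no individual level near the maximum $Y_{(n)}$ carries appreciable conditional mass. My plan is to condition on $Y_{(n)} = m$, note that the remaining $n-1$ observations are then i.i.d.\ draws from $F$ restricted to $\{0, \ldots, m\}$, and show that for any fixed $M$ the conditional probability that the window $[m-M, m-1]$ is untouched is bounded below by something like $\bigl(1 - \sum_{j=m-M}^{m-1} p_j / F(m)\bigr)^{n-1}$, which stays bounded away from zero once $m$ is large because $p_j / q_j = 1 - R_j \to 0$. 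Combining this with $Y_{(n)} \to \infty$ a.s.\ and a Borel--Cantelli argument across a suitable subsequence of anchor values gives $\liminf_{n \to \infty} L_n \geq M$ for every $M$, hence $L_n \to \infty$ a.s. The technical heart is controlling the dependence between the random endpoint $Y_{(n)}$ and the interior configuration; I would handle this through the record-value representation of the sample maximum, reducing the problem to a standard empty-urn estimate in a truncated-sample urn model.
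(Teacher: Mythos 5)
First, a point of reference: the paper does not actually prove this theorem --- it is quoted from Gr\"ubel and Hitczenko (2009), Theorems 1--3, and the ``proof'' is a citation. Measured against that source, your sketch has a genuine gap in the forward direction of part \ref{enu:Grubel-Hitczenko-1}. You propose to apply the first Borel--Cantelli lemma to the events $\left\{ L_{n}\geq1\right\}$, arguing that summability of $R_{k}\coloneqq\nicefrac{q_{k+1}}{q_{k}}$ yields $\sum_{n}\Pr\left(L_{n}\geq1\right)<\infty$. That sum over $n$ is generically infinite: once a level $k$ is skipped it remains an empty interior level for on the order of $\nicefrac{1}{p_{k}}$ further draws, so the events $\left\{ L_{n}\geq1\right\}$ are massively dependent across $n$. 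Concretely, $\sum_{n\geq1}\left\{ \left(1-p_{k}\right)^{n}-\left(1-q_{k}\right)^{n}\right\} =\nicefrac{q_{k+1}}{\left(p_{k}q_{k}\right)}=\nicefrac{R_{k}}{\left(q_{k}\left(1-R_{k}\right)\right)}$, and for, say, $q_{k}=2^{-k\left(k-1\right)/2}$ (so $R_{k}=2^{-k}$ is summable) each such term already diverges as $k$ grows; after subtracting the summable correction $\sum_{n}q_{k}^{n}$ for the minimum, one still gets $\sum_{n}\Pr\left(L_{n}\geq1\right)=\infty$. So Borel--Cantelli indexed by \emph{time} cannot work. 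The argument that does work --- and the one in the cited paper --- indexes by \emph{level}: let $I_{k}$ indicate that the first observation in $\left[k,\infty\right)$ lies in $\left[k+1,\infty\right)$. Then $\Pr\left(I_{k}=1\right)=\nicefrac{q_{k+1}}{q_{k}}$ exactly, the $I_{k}$ are mutually independent (a thinning/relative-ranks argument), every level is a.s.\ eventually occupied, and $\left\{ L_{n}\geq1\textrm{ i.o.}\right\}$ coincides up to null sets with $\left\{ I_{k}=1\textrm{ i.o.}\right\}$; both halves of Borel--Cantelli over $k$ then deliver the full equivalence. Note that your converse for part \ref{enu:Grubel-Hitczenko-1} also silently needs exactly this independence, which you only gesture at (``nearly independent'').

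Your part \ref{enu:Grubel-Hitczenko-2} plan (a union bound over $k$ for each fixed $n$, plus a matching lower bound at a level with $nq_{k+1}\asymp1$) is sound, since only convergence in probability is claimed there. Part \ref{enu:Grubel-Hitczenko-3}, however, suffers from the same time-versus-level confusion in reverse: your conditioning on $Y_{\left(n\right)}=m$ shows $\Pr\left(L_{n}\geq M\right)\rightarrow1$, which is convergence in probability, not the asserted $\Pr\left(\lim_{n}L_{n}=\infty\right)=1$. Since $L_{n}$ is not monotone in $n$, a Borel--Cantelli argument along a subsequence of anchor values controls $L_{n}$ only along that subsequence, and you supply no mechanism forcing a length-$M$ gap to survive between consecutive anchors. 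The independent block-skip events $\Pr\left(\textrm{first observation in }\left[k+1,\infty\right)\textrm{ lies in }\left[k+M+1,\infty\right)\right)=\nicefrac{q_{k+M+1}}{q_{k+1}}\rightarrow1$ give $\limsup_{n}L_{n}=\infty$ a.s.\ for free, but upgrading to $\liminf_{n}L_{n}=\infty$ is the real content of the cited Theorem 3 and is missing from your sketch.
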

\begin{proof}
Please refer to Theorems 1, 2, and 3 of \citet{GH09}.
\end{proof}
While Theorem \ref{thm:Grubel-Hitczenko} does not consider the setting
$\lim_{k\rightarrow\infty}\nicefrac{q_{k+1}}{q_{k}}=r\in\left(0,1\right)$,
Theorem \ref{thm:Grubel-Hitczenko-Geom} (also from \citet{GH09})
considers this setting's exemplar, the geometric distribution.

\begin{restatable}{theorem}{grubelhitczenkogeom}

\label{thm:Grubel-Hitczenko-Geom}If $Y_{1},Y_{2},\ldots$ are i.i.d.\ with
$\Pr\left(Y_{i}=k\right)=\left(1-p\right)^{k}p$, for $0<p<1$ and
$k\geq0$, we have 
\begin{equation}
\Pr\left(\overline{V}\leq\left(l-1\right)\tau_{p}\right)\leq\Pr\left(L_{n}\leq l\right)\leq\Pr\left(\overline{V}_{n}\leq\left(l+1\right)\tau_{p}\right),\label{eq:bndPLn}
\end{equation}
for $n,l\geq1$, where $\tau_{p}\coloneqq-\log\left(1-p\right)$ and
\[
\overline{V}_{n}\coloneqq\max\left\{ V_{1},V_{2},\ldots,V_{n-1}\right\} \uparrow\max_{1\leq i<\infty}V_{i}\eqqcolon\overline{V},
\]
for independent $V_{i}\sim\mathrm{Exp}\left(i\right)$, so that, for
$v>0$, $\Pr\left(\overline{V}_{n}\leq v\right)=\prod_{i=1}^{n-1}\left(1-e^{-iv}\right)$
and $\Pr\left(\overline{V}\leq v\right)=\prod_{i=1}^{\infty}\left(1-e^{-iv}\right)$.
Further, there is a subsequence $\left\{ n_{m}\right\} _{m=1}^{\infty}$
such that $\mathscr{L}\left(L_{n_{m}}\right)$ approaches a non-degenerate
distribution as $m\rightarrow\infty$.

\end{restatable}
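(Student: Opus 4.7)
The plan is to couple the geometric sample with an i.i.d.\ unit-rate exponential sample via $Y_i \coloneqq \lfloor E_i/\tau_p \rfloor$, $E_i \stackrel{\mathrm{iid}}{\sim} \mathrm{Exp}(1)$, and then invoke the classical R\'enyi representation of exponential order-statistic spacings. A direct computation gives $\Pr(Y_i = k) = e^{-k\tau_p} - e^{-(k+1)\tau_p} = (1-p)^k p$, so the coupling is legitimate, and the $Y$-order statistics satisfy $Y_{(i)} = \lfloor E_{(i)}/\tau_p \rfloor$. By R\'enyi's representation, the consecutive spacings $E_{(i+1)} - E_{(i)}$, $i = 1, \ldots, n-1$, are independent with $E_{(i+1)} - E_{(i)} \sim \mathrm{Exp}(n-i)$, so reindexing by $j = n - i$ gives that their joint law equals that of $(V_1, \ldots, V_{n-1})$ with $V_j \sim \mathrm{Exp}(j)$ independent. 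Thus $\overline{V}_n$ is distributionally the maximum of all $n-1$ $E$-spacings, and the product-form CDFs $\Pr(\overline{V}_n \leq v) = \prod_{i=1}^{n-1}(1 - e^{-iv})$ and $\Pr(\overline{V} \leq v) = \prod_{i=1}^{\infty}(1 - e^{-iv})$ follow from independence. Enlarging the probability space to an infinite sequence $V_1, V_2, \ldots$ puts $\overline{V} = \sup_j V_j$ on the same space, and Borel--Cantelli applied to $\sum_j e^{-jc} < \infty$ shows $\overline{V} < \infty$ almost surely.

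To derive (\ref{eq:bndPLn}), I would track the relationship between bin gaps and $E$-spacings. If two successive distinct $Y$-bins $y < y'$ with $y' - y = g' \geq 1$ border a run of $g = g' - 1$ empty bins, then the associated $E$-spacing $V = E_{(i+1)} - E_{(i)}$ satisfies $V \in [(g'-1)\tau_p, (g'+1)\tau_p)$ by the bin boundary definitions. Taking the max over all consecutive pairs and translating into integer bounds on $L_n$ yields the upper half of the sandwich: if $L_n \leq l$ then no inter-bin $V$ exceeds $(l+1)\tau_p$ and intra-bin $V$'s are trivially bounded by $\tau_p$, giving $\overline{V}_n \leq (l+1)\tau_p$. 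For the lower bound, the other side of the same inclusion gives $\overline{V}_n \leq l\tau_p \implies L_n \leq l$, and the stochastic domination $\overline{V} \geq \overline{V}_n$ lets us replace $\overline{V}_n$ by $\overline{V}$ at the cost of a single-unit loosening, producing $\Pr(\overline{V} \leq (l-1)\tau_p) \leq \Pr(L_n \leq l)$.

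For the subsequence claim, the uniform tail bound $\Pr(L_n > l) \leq \Pr(\overline{V} > (l+1)\tau_p) \to 0$ as $l \to \infty$ shows that $\{\mathscr{L}(L_n)\}_{n \geq 1}$ is tight on $\mathbb{N}$, so Helly's selection theorem supplies a subsequence $\{n_m\}$ along which $\mathscr{L}(L_{n_m})$ converges weakly to some limit $\mu$. Passing to the limit in (\ref{eq:bndPLn}) gives $\Pr(\overline{V} \leq (l-1)\tau_p) \leq \mu(\{0, \ldots, l\}) \leq \Pr(\overline{V} \leq (l+1)\tau_p)$, and since the CDF of $\overline{V}$ is strictly increasing, the lower bound is positive and the upper bound is strictly below $1$ for suitable $l$, so $\mu$ cannot be a point mass. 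The main obstacle will be the careful bookkeeping in the bin-to-spacing translation: the precise constants $(l-1)$ and $(l+1)$ in (\ref{eq:bndPLn}) depend on how one tallies gap length (empty-bin count versus bin-index difference) and on whether the maximum $E$-spacing might come from an intra-bin pair, and one must choose the convention that aligns the integer-valued $L_n$ bounds with the stated continuous bounds on $\overline{V}_n$ and $\overline{V}$.
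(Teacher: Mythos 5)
The paper itself does not prove this theorem---its ``proof'' is a pointer to Theorem 5 of Gr\"ubel and Hitczenko (2009)---so your attempt at a self-contained argument goes beyond the source. Your skeleton is the right one and almost certainly the one used in the cited work: the coupling $Y_i=\lfloor E_i/\tau_p\rfloor$ with $E_i\stackrel{\mathrm{iid}}{\sim}\mathrm{Exp}\left(1\right)$, R\'enyi's representation $E_{\left(i+1\right)}-E_{\left(i\right)}\sim\mathrm{Exp}\left(n-i\right)$ with independence across $i$, and the identification of $\overline{V}_{n}$ with the maximal spacing. The lower half of the sandwich and the Helly/tightness argument for the subsequence claim are sound (your lower bound $\Pr\left(\overline{V}_{n}\leq l\tau_{p}\right)\leq\Pr\left(L_{n}\leq l\right)$ is in fact stronger than the one asserted), modulo the slip that tightness should invoke $\Pr\left(L_{n}>l\right)\leq\Pr\left(\overline{V}>\left(l-1\right)\tau_{p}\right)$, not $\left(l+1\right)\tau_{p}$.

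The genuine gap is in the upper half of the sandwich, precisely where you defer the ``careful bookkeeping.'' Writing $E_{\left(i\right)}/\tau_{p}=Y_{\left(i\right)}+f_{i}$ with $f_{i}\in\left[0,1\right)$, an inter-bin spacing satisfies $V=\left(Y_{\left(i+1\right)}-Y_{\left(i\right)}\right)\tau_{p}+\left(f_{i+1}-f_{i}\right)\tau_{p}$; if the run of empty bins between the two occupied bins has length $g=Y_{\left(i+1\right)}-Y_{\left(i\right)}-1\leq l$, this yields only $V<\left(g+2\right)\tau_{p}\leq\left(l+2\right)\tau_{p}$, not $V\leq\left(l+1\right)\tau_{p}$ as you assert. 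This off-by-one cannot be repaired under the paper's definition of $L_{n}$ as the longest contiguous run of \emph{empty} bins: for $n=2$ and $p=\nicefrac{1}{2}$ one computes $\Pr\left(L_{2}\leq1\right)=\Pr\left(\left|Y_{1}-Y_{2}\right|\leq2\right)=\nicefrac{5}{6}$, while $\Pr\left(\overline{V}_{2}\leq2\tau_{p}\right)=1-\left(1-p\right)^{2}=\nicefrac{3}{4}$, so the stated upper bound is simply false for that reading of $L_{n}$. What your coupling actually proves is $\Pr\left(L_{n}\leq l\right)\leq\Pr\left(\overline{V}_{n}<\left(l+2\right)\tau_{p}\right)$; the bound with $\left(l+1\right)\tau_{p}$ does follow if a gap is instead measured by the difference $Y_{\left(i+1\right)}-Y_{\left(i\right)}$ (one more than the number of empty bins), which must be the convention under which the cited result is stated. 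So you must either weaken the constant to $l+2$ or change the definition of $L_{n}$; as written, the upper bound does not follow from your argument, and the non-degeneracy step (which uses that upper bound to show the subsequential limit assigns mass above every $l$) inherits the gap, though it survives with the weaker $\left(l+2\right)$ version.
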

\begin{proof}
Please refer to Theorem 5 of \citet{GH09}, which specifies a family
of $\left(\textrm{subsequence},\textrm{limiting distribution}\right)$
pairs indexed by $\eta\in\left[0,1\right]$.
\end{proof}
The upper and lower bounds for $\left\{ \Pr\left(L_{n}\leq l\right)\right\} _{n,l\geq1}$
in (\ref{eq:bndPLn}) provide upper and lower bounds for $\mathbb{E}L_{n}$
and $\mathrm{Var}\left(L_{n}\right)$, which we summarize in the following
lemma. Figure \ref{fig:LnGeomVsP} compares the derived bounds with
simulated data.

\begin{restatable}{lemma}{ghbndmom}

\label{lem:GHBndMom}For $\left\{ Y_{i}\right\} _{i=1}^{\infty}$
i.i.d.\ with $\Pr\left(Y_{i}=k\right)=\left(1-p\right)^{k}p$, for
$0<p<1$, we note that
\begin{align*}
\mathbb{E}L_{n} & \in\left[\frac{\left(1-p\right)^{2}}{p}-\frac{\left(1-p\right)^{2n}}{1-\left(1-p\right)^{n}},\:\frac{1.5}{p}+1\right]\\
\mathbb{E}L_{n}^{2} & \in\left[\frac{2\left(1-p\right)^{2}}{p^{2}}-\frac{2\left(1-p\right)^{2n}}{\left(1-\left(1-p\right)^{n}\right)^{2}},\:\frac{3.3}{p^{2}}+\frac{3}{2p}+2\right]\\
\mathrm{Var}\left(L_{n}\right) & \in\left[0,\:\frac{3.3}{p^{2}}+\frac{3}{2p}+2-\left(\frac{\left(1-p\right)^{2}}{p}-\frac{\left(1-p\right)^{2n}}{1-\left(1-p\right)^{n}}\right)^{2}\right],
\end{align*}
so that
\[
\begin{array}{rccc}
 & \textrm{as }n\rightarrow\infty & \textrm{as }p\rightarrow0^{+} & \textrm{as }p\rightarrow1^{-}\\
\mathbb{E}L_{n}\,\tilde{\in} & \left[\frac{\left(1-p\right)^{2}}{p},\:\frac{1.5}{p}+1\right], & \left[\frac{n-1}{np},\:\frac{1.5}{p}\right], & \left[0,\:2.5\right],\\
\mathbb{E}L_{n}^{2}\,\tilde{\in} & \left[\frac{2\left(1-p\right)^{2}}{p^{2}},\:\frac{3.3}{p^{2}}+\frac{3}{2p}+2\right], & \left[\frac{2n^{2}\left(1-p\right)^{2}-2}{n^{2}p^{2}},\:\frac{3.3}{p^{2}}\right], & \left[0,\:6.8\right],\\
\mathrm{Var}\left(L_{n}\right)\,\tilde{\in} & \left[0,\:\frac{2.3}{p^{2}}+\frac{5.5}{p}\right], & \left[0,\:\frac{2.3n^{2}+2n-1}{n^{2}p^{2}}\right], & \left[0,\:6.8\right],
\end{array}
\]
where $\tilde{\in}$ indicates asymptotic upper and lower bounds.

\end{restatable}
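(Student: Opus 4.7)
The plan is to compute moments of the nonnegative integer random variable $L_n$ via the tail-sum identities
\[
\mathbb{E}L_n = \sum_{l=0}^{\infty}\Pr(L_n > l), \qquad \mathbb{E}L_n^2 = \sum_{l=0}^{\infty}(2l+1)\Pr(L_n > l),
\]
and then to bound the summands using the complements of the sandwich in Theorem \ref{thm:Grubel-Hitczenko-Geom}, namely
\[
\Pr(\overline{V}_n > (l+1)\tau_p) \leq \Pr(L_n > l) \leq \Pr(\overline{V} > (l-1)\tau_p), \qquad l \geq 1,
\]
supplemented by the trivial $\Pr(L_n > l) \leq 1$ for the small-$l$ terms where the right-hand side above is at least one.

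For the lower bounds I would apply the elementary estimate $\Pr(\overline{V}_n > v) \geq e^{-v} - e^{-nv}$, which follows from $\Pr(\{V_1 > v\} \cup \{V_{n-1} > v\}) = e^{-v} + e^{-(n-1)v} - e^{-nv}$ after dropping the nonnegative middle term. Substituting $e^{-\tau_p} = 1-p$ and summing the geometric series $\sum_{k \geq 2}[(1-p)^k - (1-p)^{nk}]$ yields the claimed closed-form lower bound $(1-p)^2/p - (1-p)^{2n}/(1 - (1-p)^n)$ for $\mathbb{E}L_n$; an analogous summation against the weight $2k-1$ delivers the lower bound for $\mathbb{E}L_n^2$.

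For the upper bounds I would exploit the monotonicity of $v \mapsto \Pr(\overline{V} > v)$ via Riemann-sum comparison to get $\sum_{k \geq 1}\Pr(\overline{V} > k\tau_p) \leq \mathbb{E}\overline{V}/\tau_p$, and a parallel estimate for the weighted sum in terms of $\mathbb{E}\overline{V}^2/\tau_p^2$ plus a lower-order correction, then invoke $\tau_p \geq p$ (since $-\log(1-p) \geq p$). The main obstacle is the numerical control of the moments of $\overline{V}$; I would establish $\mathbb{E}\overline{V} \leq 1.5$ and $\mathbb{E}\overline{V}^2 \leq 3.3$ by splitting the defining integrals at $v_0 = \log 2$, evaluating $\prod_i(1-e^{-iv})$ directly on $(0, v_0]$ and using the union-bound tail estimate $\Pr(\overline{V} > v) \leq e^{-v}/(1-e^{-v})$ on $(v_0, \infty)$. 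The variance bound then follows from $\mathrm{Var}(L_n) \leq \mathbb{E}L_n^2 - (\mathbb{E}L_n)_{\text{lower}}^2$ together with $\mathrm{Var}(L_n) \geq 0$, and the three asymptotic sub-intervals ($n \to \infty$, $p \to 0^+$, $p \to 1^-$) drop out term-by-term from the finite-$n$, finite-$p$ expressions just derived, the $(1-p)^{2n}$-type corrections decaying geometrically in $n$.
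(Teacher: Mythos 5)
Your lower\-/bound argument is sound and in fact more elementary than the paper's: where the paper expands $\log\prod_{i=1}^{n-1}\bigl(1-(1-p)^{(l+1)i}\bigr)$ into a double series, applies Fubini, and re\-/exponentiates to reach the term\-/wise bound $(1-p)^{l+1}-(1-p)^{(l+1)n}$, you obtain the same quantity directly from inclusion--exclusion on $V_{1}$ and $V_{n-1}$, and summing the geometric series reproduces the stated closed form for $\mathbb{E}L_{n}$ (the analogous weighted sum dominates the stated lower bound for $\mathbb{E}L_{n}^{2}$ after a little algebra). The upper bounds are where the proposal breaks down. The sandwich gives you nothing at $l=0$ and $l=1$, where $\Pr\bigl(\overline{V}>(l-1)\tau_{p}\bigr)=1$, so your Riemann\-/sum comparison yields $\mathbb{E}L_{n}\leq 2+\mathbb{E}\overline{V}/\tau_{p}$; after the relaxations you announce ($\mathbb{E}\overline{V}\leq1.5$ and $\tau_{p}\geq p$) this is $2+1.5/p$, which strictly exceeds the claimed $1+1.5/p$ for every $p$. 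Even with the sharp value $\mathbb{E}\overline{V}\approx1.256$ and retaining $\tau_{p}$, comparing $2+\mathbb{E}\overline{V}/\tau_{p}$ with $1+1.5/p$ is a delicate numerical question near $p\approx0.65$ that you have not addressed, and the same additive slack appears in the second\-/moment bound (a ``$+4$'' where the lemma has ``$+2$''). Since the lemma's content is exactly these explicit constants---they propagate to Conjecture \ref{conj:GHExpF0}, Proposition \ref{prop:GHLimLPar}, and Table \ref{tab:Summary}---this is a real gap, not a cosmetic one.

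The paper closes it differently: it keeps the infinite product, shows $\prod_{i\geq1}\bigl(1-x^{i}\bigr)\geq\left(1-x\right)^{1/\left(1-x\right)}$ with $x=(1-p)^{l-1}$ (again via the log\-/expansion and Fubini), invokes the elementary inequality $1-\left(1-x\right)^{1/\left(1-x\right)}\leq\min\left(1.51x,\,1\right)$, and splits the sum over $l$ at $\lceil1/(2p)\rceil$, which delivers $1+\frac{1}{2p}+\frac{1.51}{p\sqrt{e}}\leq1+\frac{1.5}{p}$. To rescue your moment\-/comparison route you would need either to accept weaker additive constants in the statement (forcing changes downstream) or to treat the first few terms and the numerical evaluation of $\mathbb{E}\overline{V}$ and $\mathbb{E}\overline{V}^{2}$ far more carefully than ``split at $\log2$''---as written, those two numerical bounds are asserted rather than proved.
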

\begin{proof}
The proof, which uses Theorem \ref{thm:Grubel-Hitczenko-Geom}, appears
in Appendix \ref{sec:Longest-Gap-Proofs}.
\end{proof}
\begin{figure}[!t]
\hfill{}\includegraphics[scale=0.45]{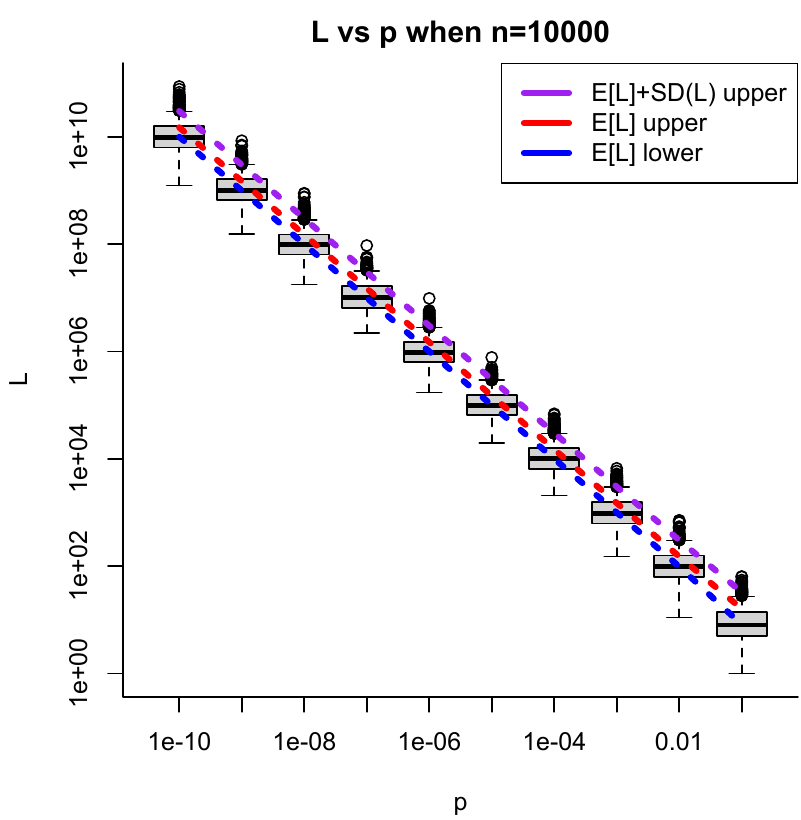}\hfill{}

\caption{The largest gaps in $\mathrm{Geometric}\left(p\right)$ samples. Each
box plot summarizes $10^{3}$ data sets of size $n=10^{4}$. Lemma
\ref{lem:GHBndMom} bounds $\mathbb{E}L_{n}$ and $\mathrm{Var}\left(L_{n}\right)$.
Red and blue curves give upper and lower bounds for $\mathbb{E}L_{n}$.
The purple curve sums the upper bounds for $\mathbb{E}L_{n}$ and
$\sqrt{\mathrm{Var}\left(L_{n}\right)}$. Axes use the log scale.
Skewness at times places the median (bold horizontal line) below the
lower bounds for $\mathbb{E}L_{n}$.}

\label{fig:LnGeomVsP}
\end{figure}

\subsection{The Exponential Setting \label{subsec:Exp-length}}

Our analysis here (as in §\ref{subsec:Occupancy-Exp}) hinges on splitting
the $\mathrm{Exp}\left(\lambda\right)$ CDF into two parts: $F_{0}$
and $F_{1}$ ((\ref{eq:kappa})--(\ref{eq:F1})). Note that, under
$F_{0}$, bin masses are nearly geometric.

\begin{restatable}{conjecture}{ghexpfzero}

\label{conj:GHExpF0}If $X_{1},X_{2},\ldots,X_{n}\stackrel{\mathrm{iid}}{\sim}F_{0}$
in (\ref{eq:F0}), then, as $n\rightarrow\infty$,
\[
\mathbb{E}L_{n}\,\tilde{\in}\,\left[\frac{1}{\rho\left(\rho-1\right)},\:\frac{2.5\rho-1}{\rho-1}\right]\textrm{ and }\mathrm{Var}\left(L_{n}\right)\,\tilde{\in}\,\left[0,\:\frac{\rho\left(7.8\rho-5.5\right)}{\left(\rho-1\right)^{2}}\right],
\]
where $\tilde{\in}$ indicates asymptotic upper and lower bounds.

\end{restatable}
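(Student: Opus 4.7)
The plan is to reduce the problem to the geometric setting of Theorem \ref{thm:Grubel-Hitczenko-Geom} and Lemma \ref{lem:GHBndMom} by identifying the bin-index distribution under $F_0$ with $\mathrm{Geometric}(1 - 1/\rho)$ and then substituting $p = (\rho-1)/\rho$ into the lemma's asymptotic bounds. First, I would analyze the bin probabilities. Under $F_0$, bin $k \leq \kappa_{\lambda,\rho}$ receives mass
\[
p_k = \frac{e^{-\lambda\rho^{k-1}} - e^{-\lambda\rho^k}}{1 - \rho^{-\rho/(\rho-1)}},
\]
and for $k$ sufficiently far below the mode, Taylor expansion gives $p_k \sim \lambda(\rho-1)\rho^{k-1}/(1 - \rho^{-\rho/(\rho-1)})$. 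Reindexing by $j \coloneqq \kappa_{\lambda,\rho} - k \geq 0$ so that bin masses decay in $j$, the ratios $p_{j+1}/p_j \to 1/\rho$, which identifies the tail of $\mathscr{L}(Y_i)$ with that of a geometric law with success probability $p = 1 - 1/\rho = (\rho-1)/\rho$.

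Second, I would appeal to the heuristic---supported by Theorem \ref{thm:Grubel-Hitczenko}, in which only the tail ratios $q_{k+1}/q_k$ enter---that $L_n$ is driven by the lower-probability tail bins, where empty bins concentrate. Granted the geometric identification, Lemma \ref{lem:GHBndMom} applied with $p = (\rho-1)/\rho$ produces the conjecture's bounds after routine algebra:
\[
\frac{(1-p)^2}{p} = \frac{1}{\rho(\rho-1)}, \qquad \frac{1.5}{p} + 1 = \frac{2.5\rho - 1}{\rho-1}, \qquad \frac{2.3}{p^2} + \frac{5.5}{p} = \frac{\rho(7.8\rho - 5.5)}{(\rho-1)^2}.
\]

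The main obstacle is justifying the reduction to a truly geometric distribution: the $F_0$-bin probabilities are only asymptotically geometric, and near the mode the doubly-exponential suppression makes them decay faster than $1/\rho$. To make the argument rigorous, I would pursue one of two routes. Option (i): couple a sample from $F_0$ with a sample from the geometric law so that, outside a finite neighborhood of the mode, the bin indices agree, and argue that the contribution of the near-mode region to $L_n$ is $o_p(1)$ since that region is w.h.p.\ fully occupied. Option (ii): revisit the record-value argument behind Theorem 5 of \citet{GH09} and extend the sandwich in (\ref{eq:bndPLn}) to distributions with $q_{k+1}/q_k \to r \in (0,1)$ rather than exactly $r$, by choosing perturbed rates $\tau_p^{\pm}$ that upper- and lower-bound the true ratios from some index onwards. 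Option (ii) seems preferable because it avoids the need for a tight pathwise coupling and aligns naturally with the asymptotic form of Lemma \ref{lem:GHBndMom}.
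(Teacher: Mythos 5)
Your proposal takes essentially the same route as the paper: identify the $F_{0}$ bin masses as asymptotically $\mathrm{Geometric}\left(1-\nicefrac{1}{\rho}\right)$ via the ratio $\nicefrac{p_{k-1}}{p_{k}}\rightarrow\nicefrac{1}{\rho}$ and substitute $p=\nicefrac{\left(\rho-1\right)}{\rho}$ into Lemma \ref{lem:GHBndMom}, which is exactly the heuristic the paper gives (and is why the statement is labeled a conjecture rather than a proposition). Your algebra checks out, and your option (ii) for closing the gap matches the open question the paper itself raises immediately after the conjecture about extending Theorem \ref{thm:Grubel-Hitczenko-Geom} to $\nicefrac{q_{k+1}}{q_{k}}\rightarrow r\in\left(0,1\right)$.
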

\begin{proof}
The basic idea is that the independent $Y_{i}$ are nearly $\mathrm{Geometric}\left(1-\nicefrac{1}{\rho}\right)$.
If they \emph{were} $\mathrm{Geometric}\left(1-\nicefrac{1}{\rho}\right)$,
the result would follow (Lemma \ref{lem:GHBndMom}). To see that the
$Y_{i}$ are nearly $\mathrm{Geometric}\left(1-\nicefrac{1}{\rho}\right)$,
we recall the proof of Proposition \ref{prop:BogachevExp} part \ref{enu:Var0Kn},
which gives, for $k\leq0$,
\begin{align*}
p_{k} & \coloneqq\Pr\left(\rho^{\kappa_{\lambda,\rho}+k-1}<X_{1}\leq\rho^{\kappa_{\lambda,\rho}+k}\right)\\
 & =\frac{\rho^{-\frac{\rho^{k}}{\rho-1}}-\rho^{-\frac{\rho^{k+1}}{\rho-1}}}{1-\rho^{-\frac{\rho}{\rho-1}}}\sim\frac{\rho^{-\frac{\rho^{k+\nicefrac{1}{2}}}{\rho-1}}\rho^{k}\log\rho}{1-\rho^{-\frac{\rho}{\rho-1}}},
\end{align*}
as $k\rightarrow-\infty$. This then implies that $\nicefrac{p_{k-1}}{p_{k}}\sim\rho^{-1+\rho^{k-\nicefrac{1}{2}}}$,
which approaches $\nicefrac{1}{\rho}$ (very quickly) as $k\rightarrow-\infty$.
\end{proof}
\begin{figure}[!t]
\hfill{}\includegraphics[scale=0.5]{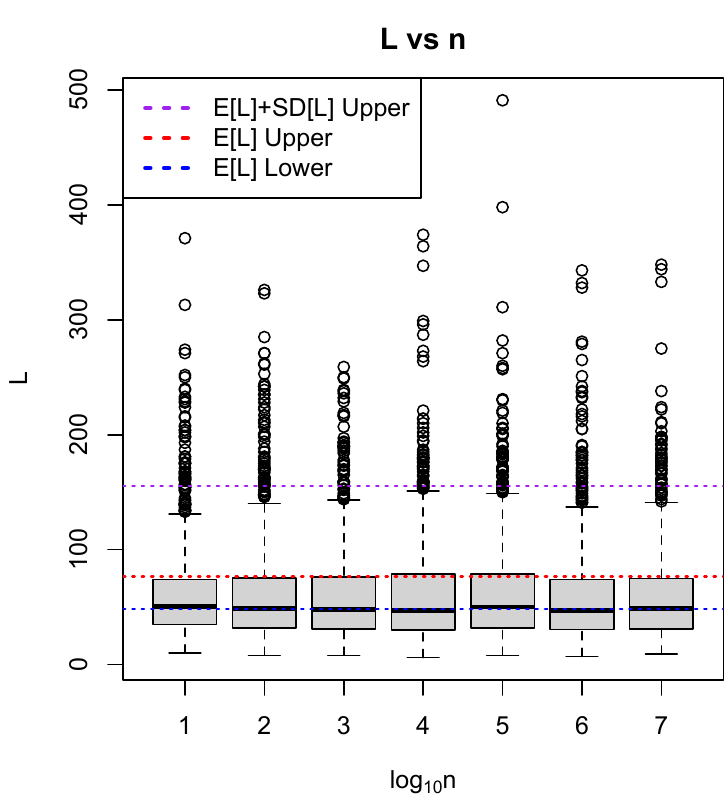}\hfill{}

\caption{Largest gap size when $X_{i}\stackrel{\mathrm{iid}}{\sim}F_{0}$.
Box plots show $10^{3}$ data sets of size $1\protect\leq\log_{10}n\protect\leq7$.
Blue and red lines give the bounds for $\mathbb{E}L_{n}$ in Conjecture
\ref{conj:GHExpF0}. The purple line sums the upper bounds for $\mathbb{E}L_{n}$
and $\sqrt{\mathrm{Var}\left(L_{n}\right)}$. We use $\rho=\frac{1.01}{0.99}\approx1.02$.
Simulations bolster the veracity of our claim in Conjecture \ref{conj:GHExpF0}.}

\label{fig:LnUnderF0}
\end{figure}

Do Theorem \ref{thm:Grubel-Hitczenko-Geom}'s conclusions apply more
generally to $q_{k}$ for which $\nicefrac{q_{k+1}}{q_{k}}\rightarrow r\in\left(0,1\right)$?
Perhaps (\citet{GH09}). Simulations support Conjecture \ref{conj:GHExpF0}
and the weak dependence of Lemma \ref{lem:GHBndMom}'s intervals on
$n$ (Figure \ref{fig:LnUnderF0}). Turning to $F_{1}$ we have:

\begin{restatable}{proposition}{ghlimlfone}

\label{prop:GHLimLF1}If $X_{1},\ldots,X_{n}\stackrel{\mathrm{iid}}{\sim}F_{1}$
in (\ref{eq:F1}), then $\Pr\left(\lim_{n\rightarrow\infty}L_{n}=0\right)=1$.

\end{restatable}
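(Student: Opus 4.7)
The plan is to invoke part \ref{enu:Grubel-Hitczenko-1} of Theorem \ref{thm:Grubel-Hitczenko}, which asserts that $\Pr(\lim_{n\to\infty} L_n = 0) = 1$ if and only if $\sum_{k\ge 0} q_{k+1}/q_{k} < \infty$, where $q_k \coloneqq \Pr(Y_1 \ge k)$ and $Y_i \coloneqq \lceil \log_\rho X_i\rceil$. Since $L_n$ is invariant under a uniform shift of the $Y_i$, we may translate so that the $Y_i$ take values in $\mathbb N$ (under $F_1$ the $X_i$ are almost surely bounded below by $c \coloneqq \rho \log\rho / (\lambda(\rho-1))$, so only finitely many indices need to be discarded). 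Hence Theorem \ref{thm:Grubel-Hitczenko} applies.

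First I would compute $q_k$. Because $F_1$ is a shifted exponential $c + \mathrm{Exp}(\lambda)$, for every $k$ with $\rho^{k-1} > c$ we have
\[
q_k = \Pr\bigl(X_1 > \rho^{k-1}\bigr) = \exp\bigl(-\lambda(\rho^{k-1}-c)\bigr),
\]
while for smaller $k$ we simply have $q_k = 1$. Second, I would evaluate the ratio: for every $k$ large enough that $\rho^{k-1} > c$,
\[
\frac{q_{k+1}}{q_k} = \exp\bigl(-\lambda(\rho-1)\rho^{k-1}\bigr).
\]
This ratio decays doubly exponentially in $k$, since $\rho^{k-1}$ itself grows geometrically.

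Third, I would split the sum at the finite threshold $k_0 \coloneqq \lceil \log_\rho c \rceil + 1$ and bound:
\[
\sum_{k=0}^{\infty}\frac{q_{k+1}}{q_k} \;\le\; k_0 \;+\; \sum_{k=k_0}^{\infty} \exp\bigl(-\lambda(\rho-1)\rho^{k-1}\bigr) \;<\; \infty,
\]
where the tail sum converges by comparison with a geometric series (once $k$ is large the summand is dominated by, say, $2^{-k}$). Thus $\sum_{k\ge 0} q_{k+1}/q_k < \infty$, and the conclusion follows from Theorem \ref{thm:Grubel-Hitczenko} part \ref{enu:Grubel-Hitczenko-1}.

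I expect no real obstacle: the only delicate point is the translation needed to put the support into $\mathbb N$ and the handling of the finitely many indices with $q_k = 1$, both routine. The computation of $q_k$ is immediate from the definition of $F_1$, and once the ratio $q_{k+1}/q_k$ is seen to be doubly exponentially small, summability is clear.
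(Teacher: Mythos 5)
Your proposal is correct and follows essentially the same route as the paper: both compute $q_k$ for the shifted exponential $F_1$, observe that $q_{k+1}/q_k = \exp(-\lambda(\rho-1)\rho^{k-1})$ decays doubly exponentially, and invoke part \ref{enu:Grubel-Hitczenko-1} of Theorem \ref{thm:Grubel-Hitczenko}. The only cosmetic differences are that the paper re-indexes the bins relative to $\kappa_{\lambda,\rho}$ rather than translating the $Y_i$, and establishes summability via the ratio test rather than direct comparison with a geometric series.
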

\begin{proof}
The proof, which uses Theorem \ref{thm:Grubel-Hitczenko}, appears
in Appendix \ref{sec:Longest-Gap-Proofs}.
\end{proof}
In the $\mathrm{Exp}\left(\lambda\right)$ setting with $\rho=\frac{1.01}{0.99}$
we expect a longest gap of length $\left[48,77\right]$ bins to the
left of $\nicefrac{1.01}{\lambda}$. The longest gap subsumes perhaps
20\% of the empty bins (see §\ref{subsec:Occupancy-Exp}), and its
length has standard deviation $\leq79$ bins (Figure \ref{fig:LnUnderF0}).
We expect the longest gap to become longer and its length more variable
as $\rho$ (and $\epsilon$) becomes smaller. On the other hand, the
largest gap above $\nicefrac{1.01}{\lambda}$ has length zero and
$\mathbb{E}X_{\left(n\right)}\sim\frac{\gamma+\log n}{\lambda}$,
as $n\rightarrow\infty$ (see page 83 of \citet{D05}).

\subsection{The Pareto Setting \label{subsec:Par-length}}

Recall that, in the $\mathrm{Pareto}\left(\nu,\beta\right)$ setting,
the bin masses are geometric, yielding:

\begin{restatable}{proposition}{ghlimlpar}

\label{prop:GHLimLPar}If $X_{1},X_{2},\ldots,X_{n}\stackrel{\mathrm{iid}}{\sim}\mathrm{Pareto}\left(\nu,\beta\right)$,
then
\[
\begin{array}{rccc}
 & \textrm{as }n\rightarrow\infty & \textrm{as }\beta\rightarrow0^{+} & \textrm{as }\beta\rightarrow\infty\\
\mathbb{E}L_{n}\,\tilde{\in} & \left[\frac{1}{\rho^{\beta}\left(\rho^{\beta}-1\right)},\:\frac{2.5\rho^{\beta}-1}{\rho^{\beta}-1}\right], & \left[\frac{n-1}{n\left(\rho^{\beta}-1\right)},\:\frac{1.5}{\rho^{\beta}-1}\right], & \left[0,\:2.5\right],\\
\mathrm{Var}\left(L_{n}\right)\,\tilde{\in} & \left[0,\:\frac{\rho^{\beta}\left(7.8\rho^{\beta}-5.5\right)}{\left(\rho^{\beta}-1\right)^{2}}\right], & \left[0,\:\frac{2.3n^{2}+2n-1}{n^{2}\left(\rho^{\beta}-1\right)^{2}}\right], & \left[0,\:6.8\right],
\end{array}
\]
where $\tilde{\in}$ indicates asymptotic upper and lower bounds.

\end{restatable}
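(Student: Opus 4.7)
The plan is to reduce to Lemma \ref{lem:GHBndMom} via the geometric bin-index distribution noted at the start of this subsection. For $X\sim\mathrm{Pareto}(\nu,\beta)$ one has $\Pr(X>\rho^{k-1})=(\nu/\rho^{k-1})^\beta$, so the tail ratio $\Pr(Y\geq k+1)/\Pr(Y\geq k)=\rho^{-\beta}$ is constant in $k$ (once one is above the support's boundary bin). After a common integer shift, the $Y_i$ are thus i.i.d.\ $\mathrm{Geometric}(p)$ on $\{0,1,\ldots\}$ with $p=1-\rho^{-\beta}$, and since $L_n$ depends only on the relative spacings of the $Y_i$, this shift is harmless.

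Given this reduction, the next step is to invoke Lemma \ref{lem:GHBndMom} with $p=1-\rho^{-\beta}$ (so $1-p=\rho^{-\beta}$) and verify the table entries algebraically. For the $n\to\infty$ column, direct substitution gives
\[
\frac{(1-p)^2}{p}=\frac{1}{\rho^{\beta}(\rho^{\beta}-1)},\qquad \frac{1.5}{p}+1=\frac{2.5\rho^{\beta}-1}{\rho^{\beta}-1},
\]
and, combining $\frac{2.3}{p^2}=\frac{2.3\rho^{2\beta}}{(\rho^\beta-1)^2}$ with $\frac{5.5}{p}=\frac{5.5\rho^\beta(\rho^\beta-1)}{(\rho^\beta-1)^2}$,
\[
\frac{2.3}{p^2}+\frac{5.5}{p}=\frac{\rho^\beta(7.8\rho^\beta-5.5)}{(\rho^\beta-1)^2},
\]
matching the claimed bounds. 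For the $\beta\to 0^+$ column one uses $p=(\rho^\beta-1)/\rho^\beta$ with $\rho^\beta\to 1$, so that $1/p\sim 1/(\rho^\beta-1)$, converting Lemma \ref{lem:GHBndMom}'s $p\to 0^+$ entries termwise. The $\beta\to\infty$ column follows immediately from Lemma \ref{lem:GHBndMom}'s $p\to 1^-$ column, which already lists the absolute constants $[0,2.5]$ and $[0,6.8]$.

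I do not anticipate a real obstacle. The only conceptual step is the translation invariance of $L_n$, used to handle the discrepancy between the Pareto-induced support $\{\lceil\log_\rho\nu\rceil,\lceil\log_\rho\nu\rceil+1,\ldots\}$ and Lemma \ref{lem:GHBndMom}'s support $\{0,1,\ldots\}$. Strictly speaking, the very first bin $\lceil\log_\rho\nu\rceil$ may carry mass differing from the pure geometric formula (unless $\log_\rho\nu\in\mathbb{Z}$), but this affects only the atom at $Y_{(1)}$, not the gap structure to its right, so the geometric reduction is valid. Everything else is arithmetic.
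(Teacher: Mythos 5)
Your proposal is correct and matches the paper's proof, which is the one-line observation that the bin indices are i.i.d.\ $\mathrm{Geometric}\left(1-\nicefrac{1}{\rho^{\beta}}\right)$ (cf.\ (\ref{eq:ParGeom}), stated there under the same ``without loss of generality'' re-anchoring of the bins at $\nu$ that you justify via translation invariance of $L_{n}$) followed by substitution into Lemma \ref{lem:GHBndMom}. Your algebraic verification of the table entries is accurate.
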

\begin{proof}
Apply Lemma \ref{lem:GHBndMom} noting that $Y_{i}\stackrel{\mathrm{iid}}{\sim}\mathrm{Geometric}\left(1-\nicefrac{1}{\rho^{\beta}}\right)$
(\emph{cf.}\ (\ref{eq:ParGeom})).
\end{proof}
In the $\mathrm{Pareto}\left(\nu,1\right)$ setting with $\rho=\frac{1.01}{0.99}$
and $n\rightarrow\infty$, we expect a longest gap of length $\left[48,77\right]$
bins, subsuming perhaps 20\% of the empty bins (see §\ref{subsec:Occupancy-Par}).
The length of the longest gap has standard deviation $\leq79$ bins.
The longest gap becomes longer and its length more variable as $\rho$
or $\beta$ become smaller, \emph{i.e.}, as we increase histogram
precision or outlier size.

\section{Conclusions \label{sec:Conclusions}}

\begin{table}
\makebox[\textwidth][c]{

\begin{tabular}{r|c|c}
 & $\mathrm{Exp}\left(\lambda\right)$ & $\mathrm{Pareto}\left(\nu,\beta\right)$\tabularnewline
\hline 
$F_{\mathrm{Size}}^{-1}\left(q\right)$ & $\log_{\rho}n-\log_{\rho}\left(-\log q\right)$ & $\beta^{-1}\left(\log_{\rho}n-\log_{\rho}\left(-\log q\right)\right)$\tabularnewline
$\mathscr{L}\left(\mathrm{Size}\right)$ & $\mathrm{Gumbel}\left(1+\frac{\log n+\log\log n}{\log\rho},\frac{1}{\log\rho}\right)$ & $\mathrm{Gumbel}\left(1+\frac{\log n}{\beta\log\rho},\frac{1}{\beta\log\rho}\right)$\tabularnewline
$\Pr\left(\textrm{Largest Bin}\right)$ & $e^{\nicefrac{1}{\rho}}\Gamma\left(1+\nicefrac{1}{\rho}\right)n^{\nicefrac{-1}{\rho}}$ & $\left(\rho^{\beta}-\nicefrac{1}{\rho^{\beta}}\right)n^{-1}$\tabularnewline
$\Pr\left(\textrm{Right Tial}\right)$ & $n^{-1}$ & $n^{-1}$\tabularnewline
$\mathbb{E}\left[\mathrm{Occupancy}\right]$ & $\log_{\rho}n$ & $\beta^{-1}\log_{\rho}n$\tabularnewline
$\mathbb{E}\left[\mathrm{\#Empty}\right]$ & $\log_{\rho}\left(\frac{3\rho}{2\left(\rho-1\right)}\right)+\frac{\gamma}{\log\rho}+1$ & $\frac{1}{\beta}\left[\log_{\rho}\left(\frac{1}{1-\nicefrac{1}{\rho^{\beta}}}\right)+\frac{\gamma}{\log\rho}\right]$\tabularnewline
$\mathrm{Var}\left(\mathrm{Occupancy}\right)$ & ``$\left\lceil \log_{\rho}2\right\rceil +1$'' & $\left\lceil \beta^{-1}\log_{\rho}2\right\rceil $\tabularnewline
$\mathbb{E}\left[\textrm{Largest Gap}\right]$ & ``$\left.\left(2.5\rho-1\right)\right/\left(\rho-1\right)$'' & $\left.\left(2.5\rho^{\beta}-1\right)\right/\left(\rho^{\beta}-1\right)$\tabularnewline
$\mathrm{Var}\left(\textrm{Largest Gap}\right)$ & ``$\rho\left(7.8\rho-5.5\right)\left/\left(\rho-1\right)^{2}\right.$'' & $\rho^{\beta}\left(7.8\rho^{\beta}-5.5\right)\left/\left(\rho^{\beta}-1\right)^{2}\right.$\tabularnewline
\end{tabular}

}

\caption{Key results and simulation-supported conjectures, which use quotes.
$F_{\mathrm{Size}}^{-1}\left(q\right)$ and $\mathscr{L}\left(\mathrm{Size}\right)$
give large-$n$ approximations; the remaining rows give large-$n$
upper bounds. Except for $\mathscr{L}\left(\mathrm{Size}\right)$,
$\Pr\left(\textrm{Largest Bin}\right)$, $\mathbb{E}\left[\mathrm{\#Empty}\right]$,
and $\mathrm{Var}\left(\mathrm{Occupancy}\right)$, the $\mathrm{Exp}\left(\lambda\right)$
results equal $\mathrm{Pareto}\left(\nu,\beta\right)$ results with
$\beta=1$.}

\label{tab:Summary}
\end{table}

Table \ref{tab:Summary} compares results for $\mathrm{Exp}\left(\lambda\right)$
and $\mathrm{Pareto}\left(\nu,\beta\right)$. Having observed $n$
data points, we expect we will be able to approximate something at
least as large as the $\left(1-\nicefrac{1}{n}\right)$th quantile
of $\mathrm{Exp}\left(\lambda\right)$ or $\mathrm{Pareto}\left(\nu,\beta\right)$.
Note further that
\[
\left\{ \begin{array}{rl}
F_{\mathrm{Pareto}\left(\nu,\beta\right),\mathrm{Size}}^{-1}\left(q\right) & \lesseqqgtr F_{\mathrm{Exp}\left(\lambda\right),\mathrm{Size}}^{-1}\left(q\right)\\
\mathbb{E}_{\mathrm{Pareto}\left(\nu,\beta\right)}\left[\mathrm{Occupancy}\right] & \lesseqqgtr\mathbb{E}_{\mathrm{Exp}\left(\lambda\right)}\left[\mathrm{Occupancy}\right]\\
\mathbb{E}_{\mathrm{Pareto}\left(\nu,\beta\right)}\left[\textrm{Largest Gap}\right] & \lesseqqgtr\mathbb{E}_{\mathrm{Exp}\left(\lambda\right)}\left[\textrm{Largest Gap}\right]\\
\mathrm{Var}_{\mathrm{Pareto}\left(\nu,\beta\right)}\left(\textrm{Largest Gap}\right) & \lesseqqgtr\mathrm{Var}_{\mathrm{Exp}\left(\lambda\right)}\left(\textrm{Largest Gap}\right)
\end{array}\right\} \iff\beta\gtreqqless1
\]
The above pairs of values differ only with respect to the Pareto $\beta$
parameter. For results that differ beyond setting $\beta=1$:
\begin{itemize}
\item $\mathscr{L}_{\mathrm{Exp}\left(\lambda\right)}\left(\mathrm{Size}\right)$
has a $\log\log n$ term $\mathscr{L}_{\mathrm{Pareto}\left(\nu,1\right)}\left(\mathrm{Size}\right)$
lacks (Remark \ref{rem:altAsymp});
\item $\mathrm{Exp}\left(\lambda\right)$ (of course) puts more mass on
its final bin than $\mathrm{Pareto}\left(\nu,1\right)$;
\item $\mathbb{E}_{\mathrm{Exp}\left(\lambda\right)}\left[\mathrm{\#Empty}\right]-\log_{\rho}\left(\nicefrac{3}{2}\right)-1=\mathbb{E}_{\mathrm{Pareto}\left(\nu,1\right)}\left[\mathrm{\#Empty}\right]$;
and
\item $\mathrm{Var}_{\mathrm{Exp}\left(\lambda\right)}\left(\mathrm{Occupancy}\right)-1=\mathrm{Var}_{\mathrm{Pareto}\left(\nu,1\right)}\left(\mathrm{Occupancy}\right)$.
\end{itemize}
Of the eight Pareto values that depend on $\beta$, only $\Pr\left(\textrm{Largest Bin}\right)$
grows larger as $\beta$ grows larger. This makes sense: larger $\beta$
means fewer Pareto outliers.

In the end, we see that, if we set $\beta=1$ and ignore $\Pr\left(\textrm{Largest Bin}\right)$,
exponential histograms holding $\mathrm{Exp}\left(\lambda\right)$
and $\mathrm{Pareto}\left(\nu,\beta\right)$ data have remarkably
similar properties. While $\mathrm{Pareto}\left(\nu,\beta\right)$
is heavy-tailed to the right, relative to the histogram $\mathrm{Exp}\left(\lambda\right)$
is (in some sense) heavy-tailed to the left. While $\mathrm{Pareto}\left(\nu,\beta\right)$
avoids overly small values, $\mathrm{Exp}\left(\lambda\right)$---like
$\mathrm{Uniform}\left(0,1\right)$ (see §\ref{subsec:Heavy-and-Light-Tailed})---does
not.

Relative-error-guaranteed exponential histograms work well on $\mathrm{Exp}\left(\lambda\right)$
and $\mathrm{Pareto}\left(\nu,\beta\right)$ data. 1.\ Sizes grow
like $\log_{\rho}n$. 2.\ Missing masses to the right of the largest
bin decrease like $\nicefrac{1}{n}$. 3.\ Numbers of unused bins
plateau with $n$. 4.\ Longest gaps occupy a small fraction of the
total number of empty bins. The \citet{CKLTV21} approach to relative-error-guaranteed
quantile estimation uses a much more involved algorithm---one that
requires more space (see (\ref{eq:ReqSketchSpace})).

Exponential histograms probably efficiently store many types of data.
Take, \emph{e.g.}, Cauchy data. While this setting calls for two histograms:
$\mathcal{H}_{\left(-\infty,0\right)}$ and $\mathcal{H}_{\left(0,\infty\right)}$,
we imagine four: $\mathcal{H}_{\left(-\infty,-1\right)}$, $\mathcal{H}_{\left[-1,0\right)}$,
$\mathcal{H}_{\left(0,1\right]}$, and $\mathcal{H}_{\left(1,\infty\right)}$.
We expect $\mathcal{H}_{\left[-1,0\right)}$, $\mathcal{H}_{\left(0,1\right]}$,
and a histogram holding $\mathrm{Uniform}\left(0,1\right)$ values
to have similar size, accuracy, occupancy, and gap sizes. We expect
$\mathcal{H}_{\left(-\infty,-1\right)}$, $\mathcal{H}_{\left(1,\infty\right)}$,
and a histogram holding $\mathrm{Pareto}\left(1,1\right)$ values
to have similar size, accuracy, occupancy, and gap sizes. In particular,
we expect total size to be $\mathcal{O}\left(\log n\right)$, as $n\rightarrow\infty$.

\section*{Acknowledgements}

I thank J.F.\ Huard for sharing his enthusiasm for computation on
data streams.

\appendix

\section{Proofs for Section \ref{sec:Size} \label{sec:Size-Proofs}}

\distexpm*
\begin{proof}
Letting $W_{n}\coloneqq\log X_{\left(n\right)}-\log X_{\left(1\right)}$,
we start by showing that
\begin{align}
F_{W_{n}}\left(w\right) & =\left(n-1\right)B\left(1+\frac{n}{e^{w}-1},n-1\right)\label{eq:ExpFW}\\
f_{W_{n}}\left(w\right) & =\frac{ne^{w}F_{W_{n}}\left(w\right)}{\left(e^{w}-1\right)^{2}}\left\{ \psi\left(\frac{ne^{w}}{e^{w}-1}\right)-\psi\left(1+\frac{n}{e^{w}-1}\right)\right\} \label{eq:ExpfW}\\
F_{W_{n}}^{-1}\left(q\right) & \sim\log\left(\frac{n\log\left(n-1\right)}{\log\nicefrac{1}{q}}+1\right)\sim\log n-\log\left(-\log q\right),\label{eq:ExpFWinv}
\end{align}
for $w>0$ and $0<q<1$, where the asymptotic results hold in the
settings described in the theorem statement. Letting $Y_{i}\coloneqq\log X_{i}$,
for $1\leq i\leq n$, and fixing $y\in\mathbb{R}$, we note that 
\[
F_{Y_{i}}\left(y\right)=\Pr\left(Y_{i}\leq y\right)=\Pr\left(X_{i}\leq e^{y}\right)=1-\exp\left(-\lambda e^{y}\right),
\]
which gives $f_{Y_{i}}\left(y\right)=\lambda\exp\left(-\lambda e^{y}+y\right)$.
Equation (2.3.3) of \citet{DN03} then gives
\begin{align}
F_{W_{n}}\left(w\right) & =n\lambda\int_{-\infty}^{\infty}e^{-\lambda e^{y}+y}\left[e^{-\lambda e^{y}}-e^{-\lambda e^{y}e^{w}}\right]^{n-1}dy\\
 & =n\int_{0}^{\infty}e^{-z}\left[e^{-z}-e^{-ze^{w}}\right]^{n-1}dz\\
 & =n\sum_{j=0}^{n-1}\binom{n-1}{j}\left(-1\right)^{n-1-j}\int_{0}^{\infty}e^{-z\left[\left(n-1-j\right)e^{w}+j+1\right]}dz\label{eq:useBinThm}\\
 & =n\sum_{j=0}^{n-1}\frac{\binom{n-1}{j}\left(-1\right)^{n-1-j}}{\left(n-1-j\right)e^{w}+j+1}=n\sum_{i=0}^{n-1}\frac{\binom{n-1}{i}\left(-1\right)^{i}}{ie^{w}+n-i}\label{eq:j-to-i}\\
 & =\,_{2}F_{1}\left(-\left(n-1\right),\frac{n}{e^{w}-1};1+\frac{n}{e^{w}-1};1\right)\\
 & =\left(n-1\right)B\left(1+\frac{n}{e^{w}-1},n-1\right),\label{eq:useHypergeo}
\end{align}
where (\ref{eq:useBinThm}) uses the binomial theorem, (\ref{eq:j-to-i})
uses $i=n-1-j$, and (\ref{eq:useHypergeo}) uses an identity for
the hypergeometric function $_{2}F_{1}$.\footnote{See \href{https://functions.wolfram.com/07.23.03.0002.01}{https://functions.wolfram.com/07.23.03.0002.01}.}
This gives (\ref{eq:ExpFW}), and so also (\ref{eq:ExpfW}). To see
(\ref{eq:ExpFWinv}) note that 
\begin{align}
\lim_{y\rightarrow\infty}yB\left(1+\frac{\log\left(\nicefrac{1}{q}\right)}{\log y},y\right) & =\lim_{y\rightarrow\infty}\frac{y\Gamma\left(1+\frac{\log\left(\nicefrac{1}{q}\right)}{\log y}\right)}{y^{1+\frac{\log\left(\nicefrac{1}{q}\right)}{\log y}}}=q\label{eq:lim-y}\\
\lim_{q\rightarrow0^{+}}yB\left(1+\frac{\log\left(\nicefrac{1}{q}\right)}{\log y},y\right) & =\lim_{q\rightarrow0^{+}}\frac{y\Gamma\left(y\right)}{\left(1+\frac{\log\left(\nicefrac{1}{q}\right)}{\log y}\right)^{y}}=0\label{eq:lim-q0}\\
\lim_{q\rightarrow1^{-}}yB\left(1+\frac{\log\left(\nicefrac{1}{q}\right)}{\log y},y\right) & =yB\left(1,y\right)=1,\label{eq:lim-q1}
\end{align}
for $y>0$, where (\ref{eq:lim-y})--(\ref{eq:lim-q0}) use a well-known
approximation for $\lim_{y\rightarrow\infty}B\left(c,y\right)$ and
(\ref{eq:lim-y}) and (\ref{eq:lim-q1}) use the continuity of $\Gamma$
and $\log$ and $B$ at 1. Setting
\begin{equation}
1+\frac{n}{e^{w}-1}=1+\frac{\log\left(\nicefrac{1}{q}\right)}{\log\left(n-1\right)}\label{eq:solveThis}
\end{equation}
and solving for $w$ yields (\ref{eq:ExpFWinv}); \emph{i.e.}, we
have (\ref{eq:ExpFW})--(\ref{eq:ExpFWinv}). Results (\ref{eq:ExpFM})--(\ref{eq:ExpFMinv})
follow by noting that $M_{n}=\nicefrac{W_{n}}{\log\rho}+1$, so that
$F_{M_{n}}\left(\mu\right)=F_{W_{n}}\left(\left(\mu-1\right)\log\rho\right)$,
which completes the proof.
\end{proof}
Lemma \ref{lem:Cerone} helps us prove Lemma \ref{lem:bndExpMomM}.
\begin{lem}
\label{lem:Cerone}For $x,y>1$ and $A\left(x\right)\coloneqq\frac{x-1}{x\sqrt{2x-1}}$,
we have $0\leq\nicefrac{1}{xy}-B\left(x,y\right)\leq A\left(x\right)A\left(y\right)\leq A\left(\nicefrac{\left(3+\sqrt{5}\right)}{2}\right)^{2}\approx0.09017.$
\end{lem}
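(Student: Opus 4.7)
The plan is to prove the three inequalities in turn, each by a short, self-contained argument.

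The left-most inequality $B(x,y) \leq \nicefrac{1}{(xy)}$ follows from Chebyshev's integral inequality applied to $f(t) \coloneqq t^{x-1}$ and $g(t) \coloneqq (1-t)^{y-1}$ on $[0,1]$: for $x,y>1$, $f$ is strictly increasing and $g$ is strictly decreasing, so
\[
\int_0^1 f(t)g(t)\,dt \;\leq\; \int_0^1 f(t)\,dt \cdot \int_0^1 g(t)\,dt \;=\; \frac{1}{x}\cdot\frac{1}{y}.
\]
The left-hand side equals $B(x,y)$ by definition, giving $\nicefrac{1}{(xy)} - B(x,y) \geq 0$.

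For the middle inequality, I would invoke the Cauchy--Schwarz bound for the Chebyshev functional $T(f,g) \coloneqq \int_0^1 fg - \int_0^1 f \int_0^1 g$. Because $T$ is the covariance under the uniform measure on $[0,1]$, it is a positive semidefinite symmetric bilinear form, so $T(f,g)^2 \leq T(f,f)\,T(g,g)$. A direct computation yields $T(f,f) = \frac{1}{2x-1} - \frac{1}{x^2} = \frac{(x-1)^2}{x^2(2x-1)} = A(x)^2$, and the substitution $u=1-t$ shows $T(g,g) = A(y)^2$. Combined with the sign information $T(f,g) = B(x,y) - \nicefrac{1}{(xy)} \leq 0$ from the previous step, this gives $\nicefrac{1}{(xy)} - B(x,y) \leq A(x)A(y)$.

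For the rightmost inequality, I would maximize $A$ on $(1,\infty)$. A routine derivative calculation yields $A'(x) = \frac{-x^2 + 3x - 1}{x^2(2x-1)^{3/2}}$, whose unique root in $(1,\infty)$ is $x^* = \nicefrac{(3+\sqrt{5})}{2}$ (the square of the golden ratio). Since $A(1)=0$ and $A(x) \to 0$ as $x \to \infty$, $x^*$ is the global maximum on $(1,\infty)$, so $A(x)A(y) \leq A(x^*)^2$, and a numerical evaluation confirms $A(x^*)^2 \approx 0.09017$.

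The only non-routine step is identifying the correct Cauchy--Schwarz-type bound for the Chebyshev functional (the workhorse result of \citet{C07}); once that is in hand, everything reduces to computing a variance in closed form and optimizing a smooth one-variable function, both of which are mechanical.
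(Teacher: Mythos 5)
Your proposal is correct, and all three steps check out: Chebyshev's integral inequality applied to the increasing $t^{x-1}$ and decreasing $(1-t)^{y-1}$ gives the sign; the Cauchy--Schwarz (pre-Gr\"uss) bound for the Chebyshev functional gives the middle inequality, with $T(f,f)=\frac{1}{2x-1}-\frac{1}{x^{2}}=A(x)^{2}$ computed correctly; and the calculus step correctly locates the maximizer of $A$ at $x^{*}=\nicefrac{(3+\sqrt{5})}{2}$, the root of $x^{2}-3x+1=0$ in $(1,\infty)$, with $A(x^{*})^{2}\approx0.09017$. The difference from the paper is that the paper supplies no argument at all --- it simply cites Theorem 7 on page 80 of \citet{C07} --- whereas you reconstruct a complete, self-contained proof. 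The machinery you use (the Chebyshev functional viewed as a covariance, bounded via Cauchy--Schwarz) is the standard route to such beta-function bounds and is presumably what underlies the cited result, so this is less a genuinely different method than a substitution of a full proof for a citation; what it buys is that the lemma no longer depends on an external reference, at the cost of about a page of routine computation. One cosmetic point: it is worth stating explicitly that $A>0$ on $(1,\infty)$ and continuous, so that the unique interior critical point together with the vanishing boundary behavior $A(1^{+})=0$ and $A(x)\sim\nicefrac{1}{\sqrt{2x}}\rightarrow0$ indeed forces $x^{*}$ to be the global maximum; you gesture at this and it is easily made airtight.
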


\begin{proof}
This is Theorem 7 on page 80 of \citet{C07}.
\end{proof}
\bndexpmomm*
\begin{proof}
Letting $W_{n}\coloneqq\log X_{\left(n\right)}-\log X_{\left(1\right)}$,
we first prove a related result for $W_{n}$. Using (\ref{eq:ExpFW})
and Lemma \ref{lem:Cerone} and starting with $\mathbb{E}W_{n}$ we
have 
\begin{align}
\mathbb{E}W_{n} & =\int_{0}^{\infty}\left\{ 1-F_{W_{n}}\left(w\right)\right\} dw\label{eq:bndEW}\\
 & =\int_{0}^{\infty}\left\{ 1-\left(n-1\right)B\left(1+\frac{n}{e^{w}-1},n-1\right)\right\} dw\\
 & \geq n\int_{0}^{\infty}\frac{dw}{e^{w}+n-1}=\lambda_{1}
\end{align}
and
\begin{equation}
\mathbb{E}W_{n}-\lambda_{1}\leq\frac{n\left(n-2\right)}{\sqrt{2n-3}}\int_{0}^{\infty}\frac{1}{e^{w}+n-1}\sqrt{\frac{e^{w}-1}{e^{w}+2n-1}}dw=\delta_{1}.
\end{equation}
Now turning to $\mathbb{E}W_{n}^{2}$ and again using (\ref{eq:ExpFW})
and Lemma \ref{lem:Cerone} we have
\begin{align}
\mathbb{E}W_{n}^{2} & =2\int_{0}^{\infty}w\left\{ 1-F_{W_{n}}\left(w\right)\right\} dw\label{eq:startBndEW2}\\
 & \geq2n\int_{0}^{\infty}\frac{w}{e^{w}+n-1}dw=\lambda_{2}
\end{align}
and
\begin{align}
\mathbb{E}W_{n}^{2}-\lambda_{2} & \leq\frac{2n\left(n-2\right)}{\sqrt{2n-3}}\int_{0}^{\infty}\frac{w}{e^{w}+n-1}\sqrt{\frac{e^{w}-1}{e^{w}+2n-1}}dw\\
 & =\frac{2n\left(n-2\right)}{\sqrt{2n-3}}\int_{1}^{\infty}\frac{\log z}{z\left(z+n-1\right)}\sqrt{\frac{z-1}{z+2n-1}}dz\\
 & \leq\frac{2n\left(n-2\right)}{\sqrt{2n-3}}\int_{1}^{\infty}\frac{\log z}{\sqrt{z}\left(z+n-1\right)^{\nicefrac{3}{2}}}dz=\delta_{2}.\label{eq:bndEW2}
\end{align}
Combining (\ref{eq:bndEW}) through (\ref{eq:bndEW2}) yields bounds
for $\mathrm{Var}\left(W_{n}\right)=\mathbb{E}W_{n}^{2}-\left(\mathbb{E}W_{n}\right)^{2}$,
and noting that $M_{n}=\nicefrac{W_{n}}{\log\rho}+1$ gives the desired
result.
\end{proof}
Lemma \ref{lem:ParSizeInequal} helps us prove Theorem \ref{thm:ParetoCDF}.
\begin{lem}
\label{lem:ParSizeInequal}For $e^{-m}<q<1$\emph{ }(so that $0<-\log q<m$),
we have 
\[
-\frac{\log q}{m}\leq q^{-1/m}-1\leq-\frac{\log q}{m+\log q}.
\]
\end{lem}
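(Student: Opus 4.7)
}
My plan is to reduce both inequalities to a pair of classical bounds on $e^x$ and then substitute back. Set $t \coloneqq -\log q$, so that $q = e^{-t}$ with $t \in (0,m)$, and $q^{-1/m} - 1 = e^{t/m} - 1$. Letting $x \coloneqq t/m \in (0,1)$, the pair of inequalities to be proved becomes the single chain
\[
x \;\leq\; e^{x} - 1 \;\leq\; \frac{x}{1-x}, \qquad x \in (0,1).
\]
Once this chain is established, dividing through by $m$ and replacing $x$ by $t/m = -\log q / m$ recovers the stated bounds.

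The lower bound $x \leq e^x - 1$ is the usual tangent-line inequality $e^x \geq 1 + x$ at $0$, which follows immediately from convexity of $e^x$ (or, equivalently, from truncating the Taylor series $e^x = \sum_{k\geq 0} x^k/k!$ at $k=1$ and noting that all remaining terms are nonnegative for $x>0$).

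For the upper bound, I would rewrite it in the equivalent form $e^{x}(1-x) \leq 1$, or, more usefully, $e^x \leq (1-x)^{-1}$ for $x \in [0,1)$. This follows from a termwise comparison of power series: $e^x = \sum_{k\geq 0} x^k/k!$ and $(1-x)^{-1} = \sum_{k\geq 0} x^k$ converge on $[0,1)$, and since $1/k! \leq 1$ for every $k \geq 0$ while $x^k \geq 0$, the bound holds term by term. Rearranging $e^x \leq (1-x)^{-1}$ gives $e^x - 1 \leq x/(1-x)$, completing the chain.

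There is no real obstacle here; the only thing to be careful of is bookkeeping the sign of $\log q$ (since $q < 1$ makes $\log q < 0$) and checking that the substitution $t = -\log q$ maps $q \in (e^{-m},1)$ onto $t \in (0,m)$, which keeps $x = t/m$ strictly inside $(0,1)$ so that both the geometric series for $(1-x)^{-1}$ and the denominator $m - t = m(1-x)$ appearing on the right-hand side of the target inequality are well-defined and positive.
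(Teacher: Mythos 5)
Your proof is correct and follows essentially the same route as the paper, which also reduces the claim to the classical chain $1+x\leq e^{x}\leq\left(1-x\right)^{-1}$ via the substitution $x=-\log q/m\in\left(0,1\right)$. The only difference is that you supply a power-series proof of that chain where the paper simply cites it as known.
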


\begin{proof}
This follows from the inequality $x+1\leq e^{x}\leq\left(1-x\right)^{-1}$,
for $\left|x\right|<1$. If we let $e^{x}=q^{-1/m}$, \emph{i.e.},
$x=-\frac{\log q}{m}\in\left(0,1\right)$, then the result follows.
\end{proof}
\paretocdf*
\begin{proof}
Letting $W_{n}\coloneqq\log X_{\left(n\right)}-\log X_{\left(1\right)}$,
we start by showing that
\begin{align}
F_{W_{n}}\left(w\right) & =\left(1-e^{-\beta w}\right)^{n-1}\label{eq:ParetoFW}\\
f_{W_{n}}\left(w\right) & =\left(n-1\right)\beta\left(1-e^{-w\beta}\right)^{n-2}e^{-\beta w}\label{eq:ParetofW}\\
F_{W_{n}}^{-1}\left(q\right) & =-\frac{\log\left(1-q^{\nicefrac{1}{\left(n-1\right)}}\right)}{\beta}\sim\frac{\log n-\log\left(-\log q\right)}{\beta},\label{eq:ParetoFWinv}
\end{align}
for $w>0$ and $0<q<1$, where the asymptotic results hold in the
settings described in the theorem statement. Letting $Y_{i}\coloneqq\log X_{i}$,
for $1\leq i\leq n$, and fixing $y>\log\nu$, we note that 
\[
F_{Y_{i}}\left(y\right)=\Pr\left(Y_{i}\leq y\right)=\Pr\left(X_{i}\leq e^{y}\right)=1-\left(\nicefrac{\nu}{e^{y}}\right)^{\beta},
\]
which gives $f_{Y_{i}}\left(y\right)=\left(\nicefrac{\nu}{e^{y}}\right)^{\beta}\beta$.
Equation (2.3.3) of \citet{DN03} then gives
\begin{align}
F_{W_{n}}\left(w\right) & =n\beta\int_{\log\nu}^{\infty}\left(\nicefrac{\nu}{e^{y}}\right)^{\beta}\left[\left(\nicefrac{\nu}{e^{y}}\right)^{\beta}-\left(\nicefrac{\nu}{e^{y+w}}\right)^{\beta}\right]^{n-1}dy\\
 & =n\beta\left(1-e^{-\beta w}\right)^{n-1}\int_{0}^{1}x^{\beta n-1}dx=\left(1-e^{-\beta w}\right)^{n-1},\label{eq:useTransform}
\end{align}
where (\ref{eq:useTransform}) uses transformation $x=\nicefrac{\nu}{e^{y}}$.
This gives (\ref{eq:ParetoFW}), (\ref{eq:ParetofW}), and the first
part of (\ref{eq:ParetoFWinv}). The second part of (\ref{eq:ParetoFWinv})
follows from Lemma \ref{lem:ParSizeInequal} with $m=n-1$. Results
(\ref{eq:ParetoFM}) through (\ref{eq:ParetoFMinv}) then follow from
(\ref{eq:ParetoFW}) through (\ref{eq:ParetoFWinv}) because $M_{n}=\nicefrac{W_{n}}{\log\rho}+1$,
so that $F_{M_{n}}\left(\mu\right)=F_{W_{n}}\left(\left(\mu-1\right)\log\rho\right)$,
which completes the proof.
\end{proof}
\paretomgfm*
\begin{proof}
Using (\ref{eq:ParetofM}) in Theorem \ref{thm:ParetoCDF} we have
\begin{align*}
\mathbb{E}e^{tM_{n}} & =\left(n-1\right)\beta\log\rho\int_{1}^{\infty}e^{t\mu}\left(1-\rho^{-\beta\left(\mu-1\right)}\right)^{n-2}\rho^{-\beta\left(\mu-1\right)}d\mu\\
 & =\left(n-1\right)e^{t}\int_{0}^{1}x^{-t\left/\left(\beta\log\rho\right)\right.}\left(1-x\right)^{n-2}dx\\
 & =\left(n-1\right)e^{t}B\left(1-\nicefrac{t}{\beta\log\rho},n-1\right).
\end{align*}
The remaining statements follow by taking $\mathbb{E}M_{n}^{k}=\left.\frac{d^{k}}{dt^{k}}\mathbb{E}e^{tM_{n}}\right|_{t=0}$,
for $k\geq1$, and noting that $\psi\left(x\right)\sim\log x$, $\psi_{1}\left(x\right)\sim\nicefrac{1}{x}$,
and $\psi_{2}\left(x\right)\sim\nicefrac{-1}{x^{2}}$, as $x\rightarrow\infty$.
\end{proof}

\section{Proofs for Section \ref{sec:Accuracy} \label{sec:Accuracy-Proofs}}

\bndexpprob*
\begin{proof}
For $x\in\mathbb{R}$ we first note that
\begin{align}
F_{J_{n}}\left(x\right) & =\Pr\left(X\leq\rho^{x}\right)^{n}=\left(1-\exp\left(-\lambda\rho^{x}\right)\right)^{n}\label{eq:CDFJn}\\
f_{J_{n}}\left(x\right) & =n\lambda\left(1-\exp\left(-\lambda\rho^{x}\right)\right)^{n-1}\rho^{x}\exp\left(-\lambda\rho^{x}\right)\log\rho\label{eq:densityJn}
\end{align}
as in Theorem \ref{thm:Parato-A}. Using (\ref{eq:densityJn}) and
$y\in\left\{ e^{-\lambda\rho^{x-1}},e^{-\lambda\rho^{x}},e^{-\lambda\rho^{x+1}}\right\} $
we then have
\begin{align*}
\mathbb{E}e^{-\lambda\rho^{J_{n}-1}} & =n\rho\int_{0}^{1}y^{\rho}\left(1-y^{\rho}\right)^{n-1}dy=nB\left(1+\nicefrac{1}{\rho},n\right)\\
\mathbb{E}e^{-\lambda\rho^{J_{n}}} & =n\int_{0}^{1}y\left(1-y\right)^{n-1}dy=nB\left(2,n\right)=\frac{1}{n+1}\\
\mathbb{E}e^{-\lambda\rho^{J_{n}+1}} & =\frac{n}{\rho}\int_{0}^{1}y^{\nicefrac{1}{\rho}}\left(1-y^{\nicefrac{1}{\rho}}\right)^{n-1}dy=nB\left(1+\rho,n\right),
\end{align*}
which, because $\left(\rho^{\left\lceil z\right\rceil -1},\rho^{\left\lceil z\right\rceil }\right]\subset\left(\rho^{z-1},\rho^{z+1}\right]$,
for $z\in\mathbb{R}$, leads to
\begin{align}
\Pr\left(\rho^{\left\lceil J_{n}\right\rceil -1}<X\leq\rho^{\left\lceil J_{n}\right\rceil }\right) & \leq\Pr\left(\rho^{J_{n}-1}<X\leq\rho^{J_{n}+1}\right)\\
 & =\mathbb{E}\left[e^{-\lambda\rho^{J_{n}-1}}-e^{-\lambda\rho^{J_{n}+1}}\right]\\
 & =nB\left(1+\nicefrac{1}{\rho},n\right)-nB\left(1+\rho,n\right)\\
 & \sim\frac{\Gamma\left(1+\nicefrac{1}{\rho}\right)}{n^{\nicefrac{1}{\rho}}}-\frac{\Gamma\left(1+\rho\right)}{n^{\rho}},\label{eq:useBetaLim}
\end{align}
where (\ref{eq:useBetaLim}) uses $B\left(c,n\right)\sim\Gamma\left(c\right)n^{-c}$
as $n\rightarrow\infty$. This gives (\ref{eq:ExpLastProb}); (\ref{eq:ExpRest})
uses a similar argument.
\end{proof}
\bndparprob*
\begin{proof}
Theorem \ref{thm:Parato-A} gives $\mathbb{E}e^{tJ_{n}}=n\nu^{\nicefrac{t}{\log\rho}}B\left(1-\nicefrac{t}{\beta\log\rho},n\right)$,
for $t<\beta\log\rho$. Then, because $\left(\rho^{\left\lceil x\right\rceil -1},\rho^{\left\lceil x\right\rceil }\right]\subset\left(\rho^{x-1},\rho^{x+1}\right]$,
for $x\in\mathbb{R}$, we have
\begin{align}
\Pr\left(\rho^{\left\lceil J_{n}\right\rceil -1}<X\leq\rho^{\left\lceil J_{n}\right\rceil }\right) & \leq\Pr\left(\rho^{J_{n}-1}<X\leq\rho^{J_{n}+1}\right)\\
 & =\mathbb{E}\left[\left(\nicefrac{\nu}{\rho^{J_{n}-1}}\right)^{\beta}-\left(\nicefrac{\nu}{\rho^{J_{n}+1}}\right)^{\beta}\right]\\
 & =\nu^{\beta}\left(\rho^{\beta}-\nicefrac{1}{\rho^{\beta}}\right)\mathbb{E}e^{-\beta J_{n}\log\rho}\\
 & =n\left(\rho^{\beta}-\nicefrac{1}{\rho^{\beta}}\right)B\left(2,n\right)=\frac{\rho^{\beta}-\nicefrac{1}{\rho^{\beta}}}{n+1}.\label{eq:useMGF}
\end{align}
where (\ref{eq:useMGF}) uses $\mathbb{E}e^{tJ_{n}}$ at $t=-\beta\log\rho$,
bounding $p_{\mathrm{last}}$. Bounds for $p_{\mathrm{tail}}$ follow
in a similar manner.
\end{proof}

\section{Proofs for Section \ref{sec:Occupancy} \label{sec:Occupancy-Proofs}}

Lemma \ref{lem:logBinomial} helps us prove Proposition \ref{prop:KarlinExp}.
\begin{lem}
\label{lem:logBinomial}If $X\sim\mathrm{Binomial}\left(n,p\right)$,
then $\mathbb{E}\left[\log\left(X+1\right)\right]\leq\log\left(np+1\right)$
and $\mathbb{E}\left[\log\left(\log\left(X+1\right)+1\right)\right]\leq\log\left(\log\left(np+1\right)+1\right)$.
\end{lem}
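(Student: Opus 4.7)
The plan is a direct application of Jensen's inequality to two concave functions. Since $X\sim\mathrm{Binomial}(n,p)$ takes values in $\{0,1,\ldots,n\}\subset[0,\infty)$ and $\mathbb{E}X=np$, it suffices to verify that $x\mapsto\log(x+1)$ and $x\mapsto\log(\log(x+1)+1)$ are concave on $[0,\infty)$.

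For the first inequality, concavity of $h(x)\coloneqq\log(x+1)$ is standard: $h''(x)=-(x+1)^{-2}<0$. Hence Jensen's inequality gives
\[
\mathbb{E}[\log(X+1)]\leq\log(\mathbb{E}X+1)=\log(np+1),
\]
which is the first claim.

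For the second inequality, let $g(x)\coloneqq\log(\log(x+1)+1)$. The key step is to check that $g$ is concave on $[0,\infty)$. A direct computation gives
\[
g'(x)=\frac{1}{(x+1)(\log(x+1)+1)},\qquad g''(x)=-\frac{\log(x+1)+2}{(x+1)^{2}(\log(x+1)+1)^{2}}.
\]
For $x\geq0$ we have $\log(x+1)\geq0$, so the numerator $\log(x+1)+2>0$ and the denominator is positive, yielding $g''(x)<0$. Thus $g$ is concave on $[0,\infty)$, and another application of Jensen's inequality gives
\[
\mathbb{E}[\log(\log(X+1)+1)]=\mathbb{E}[g(X)]\leq g(\mathbb{E}X)=\log(\log(np+1)+1),
\]
completing the proof. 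The only nontrivial piece is the sign computation for $g''$; after that the conclusion is immediate, so I do not anticipate any real obstacle.
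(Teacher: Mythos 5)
Your proof is correct and follows essentially the same route as the paper: both arguments are a direct application of Jensen's inequality to the same two functions (the paper phrases it via convexity of $-\log(x+1)$ and $-\log(\log(x+1)+1)$ and then flips the sign, while you work with concavity directly and additionally verify the sign of $g''$ explicitly, which the paper merely asserts). No substantive difference.
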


\begin{proof}
We note first that both 
\begin{align*}
f_{1}\left(x\right) & \coloneqq-\log\left(x+1\right)\textrm{ and}\\
f_{2}\left(x\right) & \coloneqq-\log\left(\log\left(x+1\right)+1\right)
\end{align*}
are convex. Then, by Jensen's inequality, we have $\mathbb{E}f_{i}\left(X\right)\geq f_{i}\left(\mathbb{E}X\right)=f_{i}\left(np\right)$,
for $i=1,2$. Dividing each inequality through by $-1$ gives the
desired result.
\end{proof}
\karlinexp*
\begin{proof}
Parts \ref{enu:E0Kn} and \ref{enu:E1Kn} use Theorem \ref{thm:Karlin};
part \ref{enu:EKn} uses Lemma \ref{lem:logBinomial}. Note that:
\begin{enumerate}
\item When $X_{i}\stackrel{\mathrm{iid}}{\sim}F_{0}$ in (\ref{eq:F0}),
we have
\begin{equation}
p_{k}\coloneqq\Pr\left(\rho^{\kappa_{\lambda,\rho}+k-1}<X_{1}\leq\rho^{\kappa_{\lambda,\rho}+k}\right)=\frac{\rho^{-\frac{\rho^{k}}{\rho-1}}-\rho^{-\frac{\rho^{k+1}}{\rho-1}}}{1-\rho^{-\frac{\rho}{\rho-1}}},\label{eq:pF0}
\end{equation}
for $k\leq0$. By the mean value theorem there is a $k^{*}\in\left(k,k+1\right)$
such that
\[
\frac{\rho^{-\frac{\rho^{k}}{\rho-1}}-\rho^{-\frac{\rho^{k+1}}{\rho-1}}}{1-\rho^{-\frac{\rho}{\rho-1}}}=\frac{\rho^{-\frac{\rho^{k^{*}}}{\rho-1}}\rho^{k^{*}}\log^{2}\rho}{\left(\rho-1\right)\left(1-\rho^{-\frac{\rho}{\rho-1}}\right)}.
\]
Note in particular that, for any $\rho>1$,
\begin{equation}
\lim_{k\rightarrow-\infty}\frac{\rho^{-\frac{\rho^{k}}{\rho-1}}-\rho^{-\frac{\rho^{k+1}}{\rho-1}}}{\rho^{-\frac{\rho^{k+\nicefrac{1}{2}}}{\rho-1}}\rho^{k}\log\rho}=1,\label{eq:expLim1}
\end{equation}
which implies that
\begin{equation}
\frac{\rho^{-\frac{\rho^{k}}{\rho-1}}-\rho^{-\frac{\rho^{k+1}}{\rho-1}}}{1-\rho^{-\frac{\rho}{\rho-1}}}\sim\frac{\rho^{-\frac{\rho^{k+\nicefrac{1}{2}}}{\rho-1}}\rho^{k}\log\rho}{1-\rho^{-\frac{\rho}{\rho-1}}},\label{eq:pF0Sim}
\end{equation}
as $k\rightarrow-\infty$. We therefore have $\alpha\left(x\right)\coloneqq-\min\left\{ k\leq0:p_{k}\geq\nicefrac{1}{x}\right\} $
\begin{align*}
 & \sim-\min\left\{ k\leq0:\frac{\rho^{-\frac{\rho^{k+\nicefrac{1}{2}}}{\rho-1}}\rho^{k}\log\rho}{1-\rho^{-\frac{\rho}{\rho-1}}}\geq\nicefrac{1}{x}\right\} \\
 & \sim\log_{\rho}\left(\frac{x\log\rho}{1-\rho^{-\frac{\rho}{\rho-1}}}\right)+\frac{1}{\log\rho}W_{0}\left(-\frac{\left(1-\rho^{-\frac{\rho}{\rho-1}}\right)\sqrt{\rho}}{x\left(\rho-1\right)}\right),
\end{align*}
which, citing Theorem \ref{thm:Karlin} and noting that $\lim_{x\rightarrow\infty}W_{0}\left(\nicefrac{-c}{x}\right)=0$,
gives the desired result.
\item When $X_{i}\stackrel{\mathrm{iid}}{\sim}F_{1}$ in (\ref{eq:F1}),
we have
\begin{equation}
p_{k}\coloneqq\Pr\left(\rho^{\kappa_{\lambda,\rho}+k-1}<X_{1}\leq\rho^{\kappa_{\lambda,\rho}+k}\right)=\rho^{-\frac{\rho^{k}-\rho}{\rho-1}}-\rho^{-\frac{\rho^{k+1}-\rho}{\rho-1}},\label{eq:pF1}
\end{equation}
for $k\geq1$. By the mean value theorem there is a $k^{*}\in\left(k,k+1\right)$
such that
\begin{equation}
\rho^{-\frac{\rho^{k}-\rho}{\rho-1}}-\rho^{-\frac{\rho^{k+1}-\rho}{\rho-1}}=\frac{\rho^{-\frac{\rho^{k^{*}}-\rho}{\rho-1}}\rho^{k^{*}}\log^{2}\rho}{\rho-1}.\label{eq:kstar}
\end{equation}
Letting $k^{*}\coloneqq k+\epsilon\eqqcolon k+1-\delta$, for $0<\epsilon,\delta<1$,
and taking the ratio of the two sides of (\ref{eq:kstar}), we note
that
\[
\frac{\rho-1}{\log^{2}\rho}\left\{ \frac{\rho^{\frac{\rho^{k+\epsilon}-\rho^{k}}{\rho-1}}-\rho^{\frac{\rho^{k+1-\delta}-\rho^{k+1}}{\rho-1}}}{\rho^{k+\epsilon}}\right\} \approx\frac{\rho-1}{\log^{2}\rho}\left\{ \frac{\rho^{\frac{\epsilon\rho^{k}\log\rho}{\rho-1}}-\rho^{-\frac{\delta\rho^{k+1}\log\rho}{\rho-1}}}{\rho^{k+\epsilon}}\right\} ,
\]
by Taylor series expansion, which implies that the expression above
approaches zero as $k$ grows if $\epsilon=\mathcal{O}\left(\nicefrac{1}{\rho^{k}}\right)$
and infinity otherwise, giving
\[
\rho^{-\frac{\rho^{k}-\rho}{\rho-1}}-\rho^{-\frac{\rho^{k+1}-\rho}{\rho-1}}\approx\frac{\rho^{-\frac{\rho^{k}-\rho}{\rho-1}}\rho^{k}\log^{2}\rho}{\rho-1}.
\]
In this setting we therefore have $\alpha\left(x\right)\coloneqq\max\left\{ k\geq1:p_{k}\geq\nicefrac{1}{x}\right\} $
\begin{align*}
 & \approx\max\left\{ k\geq1:\frac{\rho^{-\frac{\rho^{k}-\rho}{\rho-1}}\rho^{k}\log^{2}\rho}{\rho-1}\geq\nicefrac{1}{x}\right\} \\
 & \sim\log_{\rho}\left(-\frac{\rho-1}{\log\rho}W_{-1}\left(-\frac{\rho^{-\frac{\rho}{\rho-1}}}{x\log\rho}\right)\right)\\
 & \leq\log_{\rho}\left(\frac{\rho-1}{\log\rho}\left(\eta_{\rho,x}+\sqrt{2\left(\eta_{\rho,x}-1\right)}\right)\right),
\end{align*}
where the last step uses Theorem 1 of \citet{C13} and $\eta_{\rho,x}\coloneqq\log\left(x\log\rho\right)+\frac{\rho\log\rho}{\rho-1}$.
Theorem \ref{thm:Karlin} then gives the result.
\item When $X_{i}\stackrel{\mathrm{iid}}{\sim}\mathrm{Exp}\left(\lambda\right)$,
we let
\begin{equation}
N_{n}\coloneqq\sum_{j=1}^{n}\mathbf{1}_{\left\{ X_{j}\leq\frac{\rho\log\rho}{\lambda\left(\rho-1\right)}\right\} }\sim\mathrm{Binomial}\left(n,1-\rho^{-\frac{\rho}{\rho-1}}\right)\label{eq:Nn}
\end{equation}
be the number of $X_{1},X_{2},\ldots,X_{n}$ that fall below $\frac{\rho\log\rho}{\lambda\left(\rho-1\right)}$
and note that
\begin{align}
\mathbb{E}K_{n}=\mathbb{E}\left[\mathbb{E}\left[\left.K_{n}\right|N_{n}\right]\right] & =\mathbb{E}_{0}K_{N_{n}}+\mathbb{E}_{1}K_{n-N_{n}}\label{eq:subHere}\\
 & \approx\mathbb{E}_{0}K_{n_{0}}+\mathbb{E}_{1}K_{n_{1}},\label{eq:subMean}
\end{align}
where ``$\mathbb{E}_{b}$'' gives expectation with respect to $F_{b}$
and (\ref{eq:subMean}) substitutes the means of $N_{n}$ and $n-N_{n}$
in for their values in (\ref{eq:subHere}). Continuing we have
\begin{align}
\mathbb{E}K_{n} & =\sum_{j=0}^{n}\binom{n}{j}\left(1-\rho^{-\frac{\rho}{\rho-1}}\right)^{j}\rho^{-\frac{\rho\left(n-j\right)}{\rho-1}}\left(\mathbb{E}_{0}K_{j}+\mathbb{E}_{1}K_{n-j}\right)\label{eq:E01}\\
 & \lesssim\mathbb{E}\left[\log_{\rho}\left(N_{n}+1\right)\right]+\mathbb{E}\left[\log_{\rho}\left(\log_{\rho}\left(n-N_{n}+1\right)+1\right)\right]\label{eq:use12}\\
 & \leq\log_{\rho}\left(n\left(1-\rho^{-\frac{\rho}{\rho-1}}\right)+1\right)+\log_{\rho}\left(\log_{\rho}\left(n\rho^{-\frac{\rho}{\rho-1}}+1\right)+1\right)\label{eq:useBinLem}\\
 & \sim\log_{\rho}\left(n\left(1-\rho^{-\frac{\rho}{\rho-1}}\right)\right)+\log_{\rho}\log_{\rho}\left(n\rho^{-\frac{\rho}{\rho-1}}\right)\sim\log_{\rho}n,
\end{align}
where (\ref{eq:use12}) uses parts \ref{enu:E0Kn} and \ref{enu:E1Kn}
and (\ref{eq:useBinLem}) uses Lemma \ref{lem:logBinomial}.
\end{enumerate}
\end{proof}
Lemma \ref{lem:log2Binomial} helps us prove Proposition \ref{prop:BogachevExp}.
\begin{lem}
\label{lem:log2Binomial}If $X\sim\mathrm{Binomial}\left(n,p\right)$,
then $\mathbb{E}\left[\log^{2}\left(X+1\right)\right]\lesssim\log^{2}\left(np+1\right)$
and $\mathbb{E}\left[\log^{2}\left(\log\left(X+1\right)+1\right)\right]\lesssim\log^{2}\left(\log\left(np+1\right)+1\right)$,
for $0<p<1$ and $n\rightarrow\infty$.
\end{lem}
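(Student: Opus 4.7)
The plan is to exploit the fact that $\phi(x) \coloneqq \log^2(x+1)$ is convex on $[0, e-1]$ and only concave on $[e-1, \infty)$, so Jensen's inequality cannot be applied directly (as it was in the proof of Lemma \ref{lem:logBinomial}). For fixed $p \in (0,1)$, however, the binomial $X$ concentrates in the concave region, and the probability of the complementary event is exponentially small in $n$; conditioning on the overwhelmingly likely good event lets us recover a Jensen-style bound.

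Concretely, for the first inequality I will split
\[
\mathbb{E}\phi(X) \;=\; \mathbb{E}\bigl[\phi(X)\mathbf{1}_{\{X\leq 1\}}\bigr] + \mathbb{E}\bigl[\phi(X)\mathbf{1}_A\bigr], \qquad A \coloneqq \{X \geq 2\}.
\]
The first term is bounded by the constant $\log^2 2$, which is $o(\log^2(np+1))$. Conditional on $A$, the random variable $X+1$ takes values in $\{3,4,\ldots\}\subset[e,\infty)$, so $\phi$ is concave on the conditional support, and Jensen's inequality yields
\[
\mathbb{E}[\phi(X)\mid A] \;\leq\; \log^2\!\bigl(\mathbb{E}[X+1\mid A]\bigr) \;\leq\; \log^2\!\left(\frac{np+1}{\Pr(A)}\right),
\]
where the last step uses $\mathbb{E}[X+1\mid A]\,\Pr(A)\leq \mathbb{E}(X+1) = np+1$. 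Since $\Pr(A^c) = (1-p)^n + np(1-p)^{n-1}$ decays exponentially in $n$, we have $-\log\Pr(A) = o(1)$, and multiplying by $\Pr(A)$ and expanding the square gives $\mathbb{E}[\phi(X)\mathbf{1}_A] \leq \log^2(np+1) + o(\log^2(np+1))$, which together with the $\{X\leq 1\}$ contribution establishes the first claim.

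The second claim follows by the same three-step recipe applied to $\psi(x) \coloneqq \log^2(\log(x+1)+1)$. A direct second-derivative computation shows that $\psi$ is concave for all sufficiently large $x$ (in particular for $x\geq 2$), so splitting again at $A=\{X\geq 2\}$, bounding the small-$X$ contribution by a constant, and applying Jensen on $A$ gives
\[
\mathbb{E}[\psi(X)\mid A] \;\leq\; \log^2\!\bigl(\mathbb{E}[\log(X+1)+1\mid A]\bigr) \;\leq\; \log^2\!\left(\frac{\log(np+1)+1}{\Pr(A)}\right),
\]
where the last step uses $\log(X+1)\geq 0$ together with Lemma \ref{lem:logBinomial} to get $\mathbb{E}[\log(X+1)\mathbf{1}_A]\leq \mathbb{E}\log(X+1)\leq \log(np+1)$. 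Exponential smallness of $\Pr(A^c)$ absorbs the $-\log\Pr(A)$ correction exactly as before. The main obstacle throughout is the non-global concavity of $\phi$ and $\psi$ near the origin; the conditioning argument side-steps it by leveraging the fact that, for fixed $p$, the mass of $X$ outside the concavity region vanishes exponentially fast as $n\to\infty$.
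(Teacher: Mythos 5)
Your overall strategy is the same as the paper's: both proofs isolate the atoms at $X\in\{0,1\}$, where $\log^{2}(x+1)$ fails to be concave, and apply Jensen's inequality to the conditional law of $X$ given $X\geq2$, where concavity holds. The paper computes $\mathbb{E}\left[X\mid X\geq2\right]$ exactly via $\theta_{n,p}\coloneqq1-(1-p)^{n}-np(1-p)^{n-1}$, while you bound $\mathbb{E}\left[X+1\mid A\right]\leq(np+1)/\Pr(A)$ and absorb the $-\log\Pr(A)=o(1)$ correction; that is an equivalent bookkeeping choice, and your argument for the first inequality is correct.

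The second inequality, however, contains a genuine flaw. You invoke concavity of $\psi(x)=\log^{2}(\log(x+1)+1)$ on $[2,\infty)$, but the inequality you then write,
\[
\mathbb{E}\left[\psi(X)\mid A\right]\leq\log^{2}\left(\mathbb{E}\left[\log(X+1)+1\mid A\right]\right),
\]
is not what Jensen applied to $\psi$ yields: Jensen on $\psi$ gives $\mathbb{E}\left[\psi(X)\mid A\right]\leq\log^{2}\left(\log\left(\mathbb{E}\left[X+1\mid A\right]\right)+1\right)$, with the expectation inside the \emph{inner} logarithm. Your version pushes the expectation through only the outer $\log^{2}$, which amounts to applying Jensen to $u\mapsto\log^{2}u$ evaluated at $U\coloneqq\log(X+1)+1$. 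Since $\frac{d^{2}}{du^{2}}\log^{2}u=2(1-\log u)/u^{2}>0$ for $u<e$, and the conditional support of $U$ given $A$ begins at $1+\log 3\approx2.10<e$, that map is convex on part of the support and the step is unjustified as stated. Indeed, by Jensen on $\log$ one has $\mathbb{E}\left[\log(X+1)\mid A\right]\leq\log\left(\mathbb{E}\left[X+1\mid A\right]\right)$, so your claimed intermediate bound is \emph{stronger} than the legitimate Jensen bound and cannot be deduced from it by monotonicity. The fix preserves your architecture: apply Jensen to $\psi$ itself, obtaining $\log^{2}\left(\log\left((np+1)/\Pr(A)\right)+1\right)$ via the same bound $\mathbb{E}\left[X+1\mid A\right]\leq(np+1)/\Pr(A)$ used in your first part, and absorb $-\log\Pr(A)=o(1)$ inside the inner logarithm exactly as before; Lemma \ref{lem:logBinomial} is then not needed. (Alternatively, observe that the convex sliver $U\in[1+\log 3,e)$ corresponds to $X\in\{2,3,4\}$, whose conditional mass is exponentially small for fixed $p$, and split once more.)
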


\begin{proof}
For $x\geq0$ and 
\begin{align*}
f_{1}\left(x\right) & \coloneqq\log^{2}\left(x+1\right)\\
f_{2}\left(x\right) & \coloneqq\log^{2}\left(\log\left(x+1\right)+1\right),
\end{align*}
we note that 
\begin{align*}
f''_{1}\left(x\right) & \gtreqqless0\iff x\lesseqqgtr e-1\approx1.71828\\
f''_{2}\left(x\right) & \gtreqqless0\iff x\lesseqqgtr x_{0}\approx0.63788,
\end{align*}
which implies that $f_{1}$ and $f_{2}$ are concave for $x\geq2$
and $x\geq1$. Assuming that $n\geq3$ we therefore have
\begin{align}
\mathbb{E}f_{1}\left(X\right) & =np\left(1-p\right)^{n-1}\log^{2}2+\sum_{j=2}^{n}\binom{n}{j}p^{j}\left(1-p\right)^{n-j}f_{1}\left(j\right)\\
 & \leq np\left(1-p\right)^{n-1}\log^{2}2+\theta_{n,p}f_{1}\left(\frac{np}{\theta_{n,p}}\left(1-\left(1-p\right)^{n-1}\right)\right),\label{eq:useJen}
\end{align}
where (\ref{eq:useJen}) uses Jensen's inequality, $\theta_{n,p}\coloneqq1-\left(1-p\right)^{n}-np\left(1-p\right)^{n-1}$,
and 
\[
\mathbb{E}\left[\left.X\right|X\geq2\right]=\frac{np}{\theta_{n,p}}\left(1-\left(1-p\right)^{n-1}\right).
\]
Sending $n\rightarrow\infty$ gives the result. A similar proof gives
the second statement.
\end{proof}
\bogachevexp*
\begin{proof}
Parts \ref{enu:Var0Kn} and \ref{enu:Var1Kn} use Theorem \ref{thm:Bogachev-etal};
part \ref{enu:VarKn} uses Lemma \ref{lem:log2Binomial}. Note that:
\begin{enumerate}
\item As in (\ref{eq:pF0}) and (\ref{eq:pF0Sim}) we note that 
\[
p_{j}=\frac{\rho^{-\frac{\rho^{j}}{\rho-1}}-\rho^{-\frac{\rho^{j+1}}{\rho-1}}}{1-\rho^{-\frac{\rho}{\rho-1}}}\sim\frac{\rho^{-\frac{\rho^{j+\nicefrac{1}{2}}}{\rho-1}}\rho^{j}\log\rho}{1-\rho^{-\frac{\rho}{\rho-1}}}
\]
for $j\rightarrow-\infty$, which gives $\lim_{j\rightarrow-\infty}\nicefrac{p_{j-1}}{p_{j}}=\lim_{j\rightarrow-\infty}\rho^{-1+\rho^{j-\nicefrac{1}{2}}}=\nicefrac{1}{\rho}$,
so that, for $k\geq1$, 
\[
\lim_{j\rightarrow-\infty}\frac{p_{j-k}}{p_{j}}=\lim_{j\rightarrow-\infty}\prod_{i=1}^{k}\frac{p_{j-k+i-1}}{p_{j-k+i}}=\prod_{i=1}^{k}\lim_{j\rightarrow-\infty}\frac{p_{j-k+i-1}}{p_{j-k+i}}=\frac{1}{\rho^{k}}.
\]
The stated results then follow from parts \ref{enu:Bogachev-etal-1.1}
and \ref{enu:Bogachev-etal-1.2} of Theorem \ref{thm:Bogachev-etal}.
\item As in (\ref{eq:pF1}) we note that $p_{j}=\rho^{-\frac{\rho^{j}-\rho}{\rho-1}}-\rho^{-\frac{\rho^{j+1}-\rho}{\rho-1}}$,
so that $\frac{p_{j+1}}{p_{j}}=\frac{1-\rho^{-\rho^{j+1}}}{\rho^{\rho^{j}}-1}\rightarrow0$
as $j\rightarrow\infty$. The stated result then follows from part
\ref{enu:Bogachev-etal-1.2} of Theorem \ref{thm:Bogachev-etal}.
\item Define $N_{n}$ as in (\ref{eq:Nn}). Assume for simplicity that $n_{0}$
and $n_{1}$ are integers. Let $\mathcal{R}_{0}\coloneqq\left(0,\frac{\rho\log\rho}{\lambda\left(\rho-1\right)}\right]$
and $\mathcal{R}_{1}\coloneqq\left(\frac{\rho\log\rho}{\lambda\left(\rho-1\right)},\infty\right)$.
We imagine two scenarios:
\begin{enumerate}
\item We have
\begin{align*}
X_{0,1},X_{0,2},\ldots,X_{0,n_{0}} & \sim F_{0}\\
X_{1,1},X_{1,2},\ldots,X_{1,n_{1}} & \sim F_{1},
\end{align*}
for $F_{b}$ in (\ref{eq:F0}) and (\ref{eq:F1}), where all $n=n_{0}+n_{1}$
random variables are independent. Letting $K_{n_{b}}^{\left(b\right)}$
be the number of occupied cells in $\mathcal{R}_{b}$, we note that
$K_{n_{0}}^{\left(0\right)}$ and $K_{n_{1}}^{\left(1\right)}$ are
independent and the number of occupied cells in $\mathcal{R}_{0}\cup\mathcal{R}_{1}$
is $K_{n}\coloneqq K_{n_{0}}^{\left(0\right)}+K_{n_{1}}^{\left(1\right)}$,
so that
\[
\mathrm{Var}\left(K_{n}\right)=\mathrm{Var}\left(K_{n_{0}}^{\left(0\right)}\right)+\mathrm{Var}\left(K_{n_{1}}^{\left(1\right)}\right)=\mathrm{Var}_{0}\left(K_{n_{0}}\right)+\mathrm{Var}_{1}\left(K_{n_{1}}\right).
\]
\item When $X_{1},X_{2},\ldots,X_{n}\stackrel{\mathrm{iid}}{\sim}\mathrm{Exp}\left(\lambda\right)$,
we have $\mathrm{Var}\left(K_{n}\right)$
\begin{align}
 & =\mathrm{Var}\left(\mathbb{E}\left[\left.K_{n}\right|N_{n}\right]\right)+\mathbb{E}\left[\mathrm{Var}\left(\left.K_{n}\right|N_{n}\right)\right]\\
 & \leq\mathrm{Var}\left(\mathbb{E}_{0}\left[\left.K_{N_{n}}\right|N_{n}\right]+\mathbb{E}_{1}\left[\left.K_{n-N_{n}}\right|N_{n}\right]\right)\label{eq:varKn-2}\\
 & \quad+\mathbb{E}\left[\mathrm{Var}_{0}\left(\left.K_{N_{n}}\right|N_{n}\right)+\mathrm{Var}_{1}\left(\left.K_{n-N_{n}}\right|N_{n}\right)\right]\label{eq:varKn-3}\\
 & \lesssim\mathrm{Var}\left\{ \log_{\rho}\left(N_{n}+1\right)+\log_{\rho}\left(\log_{\rho}\left(n-N_{n}+1\right)+1\right)\right\} \label{eq:varKn-4}\\
 & \lesssim\log_{\rho}^{2}\left(\left(1-\rho^{-\frac{\rho}{\rho-1}}\right)n+1\right)\sim\log_{\rho}^{2}\left(n\right)\label{eq:varKn-5}
\end{align}
where (\ref{eq:varKn-2}) uses $K_{n}=K_{N_{n}}^{\left(0\right)}+K_{n-N_{n}}^{\left(1\right)}$
and $\mathrm{Cov}\left(\left.K_{N_{n}}^{\left(0\right)},K_{n-N_{n}}^{\left(1\right)}\right|N_{n}\right)\leq0$
almost surely; (\ref{eq:varKn-4}) uses Proposition \ref{prop:KarlinExp}
and parts \ref{enu:Var0Kn} and \ref{enu:Var1Kn} above; and (\ref{eq:varKn-5})
uses $\mathrm{Cov}\left(\log_{\rho}\left(N_{n}+1\right),\log_{\rho}\left(\log_{\rho}\left(n-N_{n}+1\right)+1\right)\right)\leq0$,
$\mathrm{Var}\left(X\right)\leq\mathbb{E}\left[X^{2}\right]$, and
Lemma \ref{lem:log2Binomial}.
\end{enumerate}
\end{enumerate}
\end{proof}
Lemmas \ref{lem:transF0min} and \ref{lem:transF1max} help us prove
Theorem \ref{thm:bndExpMissing}.
\begin{lem}
\label{lem:transF0min}For $X_{1},X_{2},\ldots,X_{n}\stackrel{\mathrm{iid}}{\sim}F_{0}$
in (\ref{eq:F0}), $p_{0}\coloneqq1-\rho^{-\frac{\rho}{\rho-1}}$,
and $n\rightarrow\infty$, 
\[
-\log\left(\frac{n\lambda X_{\left(1\right)}}{p_{0}}\right)\implies\mathrm{Gumbel}\left(0,1\right).
\]
\end{lem}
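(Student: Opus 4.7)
The plan is to compute the CDF of $-\log(n\lambda X_{(1)}/p_0)$ directly and take a limit, exactly as in the proofs of Propositions \ref{prop:asympExpM} and \ref{prop:ParetoLim-M}. The core observation is that the minimum of $n$ i.i.d.\ draws from $F_0$ is controlled by the local behavior of $F_0$ near $0$, where $F_0$ looks linear with slope $\lambda/p_0$.

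Fix $y\in\mathbb{R}$ and set $x_n\coloneqq e^{-y}p_0/(n\lambda)$. Note that $x_n\to 0$, so for $n$ large enough $x_n$ lies in the interval $(0,\rho\log\rho/(\lambda(\rho-1)))$ on which $F_0$ has its smooth, increasing form (\ref{eq:F0}). The event $\{-\log(n\lambda X_{(1)}/p_0)\leq y\}$ is the event $\{X_{(1)}\geq x_n\}$, whose probability is $(1-F_0(x_n))^n$ by independence.

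Plugging in (\ref{eq:F0}) with $u_n\coloneqq\lambda x_n=e^{-y}p_0/n$, we get
\[
\Pr\bigl(-\log(n\lambda X_{(1)}/p_0)\leq y\bigr)=\left(1-\frac{1-e^{-u_n}}{p_0}\right)^{n}.
\]
Since $u_n=O(1/n)$, a Taylor expansion gives $1-e^{-u_n}=u_n+O(u_n^2)$, so
\[
1-\frac{1-e^{-u_n}}{p_0}=1-\frac{e^{-y}}{n}+O\!\left(\frac{1}{n^{2}}\right),
\]
and hence the expression above tends to $\exp(-e^{-y})$ as $n\to\infty$, which is the CDF of $\mathrm{Gumbel}(0,1)$ evaluated at $y$.

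There is essentially no obstacle, provided we confirm at the outset that $x_n$ eventually lies in the support of $F_0$, which is immediate since $x_n\to 0$. The slight care needed is just to track that the $O(u_n^2)$ error remains negligible after raising to the $n$th power---standard since $n\cdot O(u_n^2)=O(1/n)\to 0$---so that the $n$th power of $1-e^{-y}/n+O(1/n^2)$ has the usual exponential limit.
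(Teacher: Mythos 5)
Your proof is correct and follows essentially the same route as the paper's: write $\Pr(-\log(n\lambda X_{(1)}/p_0)\leq y)=(1-F_0(x_n))^n$, substitute the explicit form of $F_0$, Taylor-expand to get $1-e^{-y}/n+\mathcal{O}(1/n^2)$, and pass to the limit $e^{-e^{-y}}$. The only difference is cosmetic (the paper writes $1-F_0$ as $(\exp(-p_0e^{-y}/n)-(1-p_0))/p_0$ before expanding), and your explicit check that $x_n$ eventually lies in the smooth part of $F_0$'s support is a small bonus the paper leaves implicit.
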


\begin{proof}
For $x\in\mathbb{R}$ and $n$ large enough we have 
\begin{align*}
\Pr\left(-\log\left(\frac{n\lambda X_{\left(1\right)}}{p_{0}}\right)\leq x\right) & =\Pr\left(X_{1}\geq\frac{p_{0}e^{-x}}{n\lambda}\right)^{n}\\
 & =\left(\frac{\exp\left(-\frac{p_{0}e^{-x}}{n}\right)-\left(1-p_{0}\right)}{p_{0}}\right)^{n}\\
 & =\left(1-\frac{e^{-x}}{n}+\mathcal{O}\left(\frac{1}{n^{2}}\right)\right)^{n}\stackrel[\infty]{n}{\longrightarrow}e^{-e^{-x}}.
\end{align*}
\end{proof}
\begin{lem}
\label{lem:transF1max}For $X_{1},X_{2},\ldots,X_{n}\stackrel{\mathrm{iid}}{\sim}F_{1}$
in (\ref{eq:F1}) and $n\rightarrow\infty$, we have 
\[
\lambda X_{\left(n\right)}-\frac{\rho\log\rho}{\rho-1}-\log n\implies\mathrm{Gumbel}\left(0,1\right).
\]
\end{lem}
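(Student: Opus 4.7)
The plan is to recognize $F_1$ as a location shift of $\mathrm{Exp}(\lambda)$ and then reduce to a standard extreme-value calculation. From (\ref{eq:F1}), if $X\sim F_1$ then $X - c\sim\mathrm{Exp}(\lambda)$ with $c\coloneqq \frac{\rho\log\rho}{\lambda(\rho-1)}$. Equivalently, $Z\coloneqq \lambda X - \frac{\rho\log\rho}{\rho-1} = \lambda(X-c)\sim\mathrm{Exp}(1)$. Setting $Z_i\coloneqq \lambda X_i - \frac{\rho\log\rho}{\rho-1}$ gives $Z_1,\ldots,Z_n$ i.i.d.\ $\mathrm{Exp}(1)$, and $Z_{(n)} = \lambda X_{(n)} - \frac{\rho\log\rho}{\rho-1}$ since the affine transformation $x\mapsto \lambda x - \frac{\rho\log\rho}{\rho-1}$ is strictly increasing.

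With this reduction, the statement we must prove becomes $Z_{(n)}-\log n \implies \mathrm{Gumbel}(0,1)$, which is the classical limit for maxima of i.i.d.\ $\mathrm{Exp}(1)$ variables (cited on page 83 of \citet{D05} and used already in the proof of Theorem \ref{thm:Parato-A}). For completeness I would verify it directly: fix $y\in\mathbb{R}$; then for all $n$ sufficiently large,
\begin{align*}
\Pr\!\left(\lambda X_{(n)} - \tfrac{\rho\log\rho}{\rho-1} - \log n \leq y\right)
&= F_1\!\left(\tfrac{y+\log n}{\lambda} + c\right)^{n} \\
&= \left(1 - e^{-(y+\log n)}\right)^{n}
= \left(1 - \tfrac{e^{-y}}{n}\right)^{n} \longrightarrow e^{-e^{-y}},
\end{align*}
as $n\to\infty$, which is exactly the $\mathrm{Gumbel}(0,1)$ CDF.

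There is essentially no obstacle here; the only thing to watch is that the threshold $\frac{y+\log n}{\lambda}+c$ eventually lies in the support $(c,\infty)$ of $F_1$, which holds once $n>e^{-y}$, so the piecewise form of $F_1$ in (\ref{eq:F1}) causes no issue in the limit. This lemma then feeds into the main analysis by pairing with Lemma \ref{lem:transF0min}: together they localize $X_{(1)}$ and $X_{(n)}$ in the $F_0$ and $F_1$ regimes, which is what is needed to bound the expected number of empty bins on each side of the cutoff $\rho^{\kappa_{\lambda,\rho}}$ in Theorem \ref{thm:bndExpMissing}.
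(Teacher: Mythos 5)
Your proposal is correct and matches the paper's own proof: both fix $y$, write $\Pr(\lambda X_{(n)}-\frac{\rho\log\rho}{\rho-1}-\log n\leq y)$ as the $n$th power of $F_{1}$ evaluated at the shifted threshold, obtain $\left(1-\frac{e^{-y}}{n}\right)^{n}\rightarrow e^{-e^{-y}}$, and your preliminary reduction to the classical $\mathrm{Exp}(1)$ maximum is just a restatement of the same computation. The remark about the threshold eventually lying in the support of $F_{1}$ is a small but welcome extra precision over the paper's ``for $n$ large enough.''
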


\begin{proof}
For $x\in\mathbb{R}$ and $n$ large enough we have
\begin{align*}
\Pr\left(\lambda X_{\left(n\right)}-\frac{\rho\log\rho}{\rho-1}-\log n\leq x\right) & =\Pr\left(X_{1}\leq\frac{x+\log n+\frac{\rho\log\rho}{\rho-1}}{\lambda}\right)^{n}\\
 & =\left(1-\frac{e^{-x}}{n}\right)^{n}\stackrel[\infty]{n}{\longrightarrow}e^{-e^{-x}}.
\end{align*}
\end{proof}
\mnminuskn*
\begin{proof}
We start with 
\begin{align}
\mathbb{E}_{0}M_{n} & \leq\log_{\rho}\left(\frac{\rho\log\rho}{\lambda\left(\rho-1\right)}\right)-\mathbb{E}_{0}\left[\log_{\rho}X_{\left(1\right)}\right]+1\label{eq:useDefs-1}\\
 & \sim\log_{\rho}\left(\frac{\rho\log\rho}{\lambda\left(\rho-1\right)}\right)+\log_{\rho}\left(\frac{n\lambda}{p_{0}}\right)+\frac{\gamma}{\log\rho}+1\label{eq:useLem-1}\\
 & =\log_{\rho}\left(\frac{n\rho\log\rho}{\left(\rho-1\right)p_{0}}\right)+\frac{\gamma}{\log\rho}+1,
\end{align}
where (\ref{eq:useDefs-1}) uses (\ref{eq:MnAn}) and (\ref{eq:F0})
and (\ref{eq:useLem-1}) uses Lemma \ref{lem:transF0min}. This, along
with part \ref{enu:E0Kn} of Proposition \ref{prop:KarlinExp}, gives
the bound for $\mathbb{E}_{0}E_{n}$. Turning to $F_{1}$ we have
\begin{align}
\mathbb{E}_{1}M_{n} & \leq\mathbb{E}_{1}\left[\log_{\rho}X_{\left(n\right)}\right]-\log_{\rho}\left(\frac{\rho\log\rho}{\lambda\left(\rho-1\right)}\right)+1\label{eq:useDefs-2}\\
 & \sim\log_{\rho}\left(\frac{\rho\log\rho}{\lambda\left(\rho-1\right)}+\frac{\gamma+\log n}{\lambda}\right)-\log_{\rho}\left(\frac{\rho\log\rho}{\lambda\left(\rho-1\right)}\right)+1\label{eq:useLem-2}\\
 & =\log_{\rho}\left(\frac{\left(\rho-1\right)\left(\gamma+\log n\right)}{\rho\log\rho}+1\right)+1,
\end{align}
where (\ref{eq:useDefs-2}) uses (\ref{eq:MnAn}) and (\ref{eq:F1})
and (\ref{eq:useLem-2}) uses Lemma \ref{lem:transF1max}. Using \citet{C13}'s
upper bound for $W_{-1}$, we then have
\[
W_{-1}\left(-\frac{\rho^{-\frac{\rho}{\rho-1}}}{n\log\rho}\right)\leq-1-\sqrt{2\zeta_{\rho,n}}-\frac{2\zeta_{\rho,n}}{3},
\]
where $\zeta_{\rho,n}\coloneqq\log\left(n\log\rho\right)+\frac{\rho\log\rho}{\rho-1}-1$.
This, along with part \ref{enu:E1Kn} of Proposition \ref{prop:KarlinExp},
gives the bound for $\mathbb{E}_{1}E_{n}$. The final statement follows
from the first two and $\mathbb{E}E_{n}=\mathbb{E}\left[\mathbb{E}_{0}\left[\left.E_{N_{n}}\right|N_{n}\right]+\mathbb{E}_{1}\left[\left.E_{n-N_{n}}\right|N_{n}\right]\right]$,
with $N_{n}$ as in (\ref{eq:Nn}).
\end{proof}
\paretoocc*
\begin{proof}
Without loss of generality we assume bins $\left\{ \left(\nu\rho^{k-1},\nu\rho^{k}\right]\right\} _{k\geq1}$,
so that the smallest bin is not truncated. For $k\geq1$ we then have
\begin{equation}
p_{k}\coloneqq\Pr\left(\nu\rho^{k-1}<X_{1}\leq\nu\rho^{k}\right)=\rho^{-\beta\left(k-1\right)}\left(1-\rho^{-\beta}\right);\label{eq:ParGeom}
\end{equation}
\emph{i.e.}, the $Y_{i}$ are i.i.d.\ $\mathrm{Geometric}\left(1-\nicefrac{1}{\rho^{\beta}}\right)$,
with support $k\geq1$. Then,
\begin{enumerate}
\item Theorem \ref{thm:Karlin} gives the first result: Note that, for $x>0$,
\[
\alpha\left(x\right)\coloneqq\max\left\{ k\geq1:p_{k}\geq\nicefrac{1}{x}\right\} =\frac{\log_{\rho}\left(\left(1-\nicefrac{1}{\rho^{\beta}}\right)x\right)}{\beta}+1.
\]
The second result then follows from Proposition \ref{prop:ParetoMGF-M}
with $\psi\left(n\right)\sim\log n$.
\item Theorem \ref{thm:Bogachev-etal} gives the result: For $j,k\geq1$,
$\frac{p_{j+k}}{p_{j}}=\frac{1}{2}\implies k=\frac{\log_{\rho}2}{\beta}$.
\end{enumerate}
\end{proof}

\section{Proofs for Section \ref{sec:Longest-Gap} \label{sec:Longest-Gap-Proofs}}

Lemma \ref{lem:AsympRat} helps us prove Lemma \ref{lem:GHBndMom}.
\begin{lem}
\label{lem:AsympRat}For $n\geq1$ and $0<p<1$, $\frac{\left(1-p\right)^{2n}}{1-\left(1-p\right)^{n}}\sim\frac{1}{np}$
and $\frac{\left(1-p\right)^{2n}}{\left(1-\left(1-p\right)^{n}\right)^{2}}\sim\frac{1}{n^{2}p^{2}}$
as $p\rightarrow0^{+}$.
\end{lem}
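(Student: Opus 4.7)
The plan is to treat $n \geq 1$ as fixed and do Taylor expansion in $p$ around $p=0^+$. First I would observe that, by continuity of the power function, $(1-p)^{2n} \to 1$ as $p \to 0^+$, so the numerator in both ratios tends to $1$. The work therefore lies in analyzing the denominators.

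For the denominators, I would apply the binomial expansion (or equivalently L'H\^{o}pital's rule) to write
\[
1 - (1-p)^n = 1 - \sum_{k=0}^{n}\binom{n}{k}(-p)^k = np - \binom{n}{2}p^2 + \cdots = np\bigl(1 + O(p)\bigr)
\]
as $p \to 0^+$, with the implicit constant in $O(p)$ depending on the fixed $n$. Equivalently, $\lim_{p\to 0^+} \tfrac{1-(1-p)^n}{np} = 1$. For the second claim I would simply square this asymptotic equivalence, giving $\bigl(1-(1-p)^n\bigr)^2 \sim n^2 p^2$ as $p \to 0^+$.

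Combining the two observations then yields
\[
\frac{(1-p)^{2n}}{1-(1-p)^n} = \frac{(1-p)^{2n}}{np\bigl(1+O(p)\bigr)} \sim \frac{1}{np}
\]
and similarly $\frac{(1-p)^{2n}}{(1-(1-p)^n)^2} \sim \frac{1}{n^2 p^2}$, which completes the argument. There is no genuine obstacle here beyond making sure the expansion is uniform in the sense needed (it is, since $n$ is fixed and $p \to 0^+$); the lemma is essentially a one-line Taylor-expansion observation packaged for reuse in the proof of Lemma \ref{lem:GHBndMom}.
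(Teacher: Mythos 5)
Your proof is correct, but it takes a different route from the paper's. You fix $n$ and do a one-shot binomial (Taylor) expansion: $(1-p)^{2n}\to 1$ and $1-(1-p)^n = np\bigl(1+O(p)\bigr)$, then divide and square. The paper instead proceeds by induction on $n$: it verifies the base case $\frac{(1-p)^2}{1-(1-p)} = \frac{1}{p}-2+p \sim \frac{1}{p}$ directly, and for the inductive step uses L'H\^{o}pital's rule to show $\lim_{p\to 0^+}\frac{1-(1-p)^n}{1-(1-p)^{n+1}} = \frac{n}{n+1}$, which converts the asymptotic for $n$ into the one for $n+1$; the second claim is handled the same way with two applications of L'H\^{o}pital. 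Both arguments treat $n$ as fixed and neither claims (nor needs) uniformity in $n$, so they prove exactly the same statement. Your expansion is the more economical of the two --- it dispenses with the induction and the repeated L'H\^{o}pital steps --- while the paper's induction buys nothing extra here beyond avoiding an explicit appeal to the binomial theorem. Either proof suffices for the use made of the lemma in Lemma \ref{lem:GHBndMom}.
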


\begin{proof}
Using induction on $n\geq1$, we note that $\frac{\left(1-p\right)^{2}}{1-\left(1-p\right)}=\frac{1}{p}-2+p\sim\frac{1}{p}$
as $p\rightarrow0^{+}$. Then, assuming that the result holds for
$n\geq1$, we note that 
\begin{equation}
\lim_{p\rightarrow0^{+}}\frac{1-\left(1-p\right)^{n}}{1-\left(1-p\right)^{n+1}}=\lim_{p\rightarrow0^{+}}\frac{n\left(1-p\right)^{n-1}}{\left(n+1\right)\left(1-p\right)^{n}}=\frac{n}{n+1}\label{eq:LHopitalStep}
\end{equation}
by L'Hôpital's rule, so that
\[
\frac{\left(1-p\right)^{2n+2}}{1-\left(1-p\right)^{n+1}}\sim\left(\frac{1-2p+p^{2}}{np}\right)\left(\frac{1-\left(1-p\right)^{n}}{1-\left(1-p\right)^{n+1}}\right)\sim\frac{1}{\left(n+1\right)p}
\]
as $p\rightarrow0^{+}$. The second result follows in the same way,
requiring two applications of L'Hôpital's rule at the step analogous
to (\ref{eq:LHopitalStep}).
\end{proof}
\ghbndmom*
\begin{proof}
The proof uses Theorem \ref{thm:Grubel-Hitczenko-Geom} and Lemma
\ref{lem:AsympRat}. We focus on $\mathbb{E}L_{n}$. Bounds for $\mathbb{E}L_{n}^{2}$
use the same argument and give bounds for $\mathrm{Var}\left(L_{n}\right)=\mathbb{E}L_{n}^{2}-\left(\mathbb{E}L_{n}\right)^{2}$.
Note that $\mathbb{E}L_{n}=\sum_{l=1}^{\infty}\left(1-\Pr\left(L_{n}\leq l\right)\right)$
\begin{align}
 & \leq\sum_{l=1}^{\infty}\left(1-\prod_{i=1}^{\infty}\left(1-\left(1-p\right)^{\left(l-1\right)i}\right)\right)\label{eq:useGHProp-1}\\
 & =\sum_{l=1}^{\infty}\left(1-\exp\left(\sum_{i=1}^{\infty}\log\left(1-\left(1-p\right)^{\left(l-1\right)i}\right)\right)\right)\\
 & =\sum_{l=1}^{\infty}\left(1-\exp\left(-\sum_{i=1}^{\infty}\sum_{j=1}^{\infty}\frac{\left(1-p\right)^{\left(l-1\right)ij}}{j}\right)\right)\\
 & =\sum_{l=1}^{\infty}\left(1-\exp\left(-\sum_{j=1}^{\infty}\frac{\left(1-p\right)^{\left(l-1\right)j}}{j\left(1-\left(1-p\right)^{\left(l-1\right)j}\right)}\right)\right)\label{eq:useFubini-1}\\
 & \leq\sum_{l=1}^{\infty}\left(1-\exp\left(-\sum_{j=1}^{\infty}\frac{\left(1-p\right)^{\left(l-1\right)j}}{j\left(1-\left(1-p\right)^{l-1}\right)}\right)\right)\\
 & =\sum_{l=1}^{\infty}\left(1-\left(1-\left(1-p\right)^{l-1}\right)^{\frac{1}{1-\left(1-p\right)^{l-1}}}\right)\\
 & \leq\sum_{l=1}^{\infty}\left(1.51\left(1-p\right)^{l-1}\wedge1\right)\label{eq:useMinBnd}\\
 & \leq1+\frac{1}{2p}+\frac{151}{100}\sum_{l=\left\lceil \nicefrac{1}{2p}\right\rceil +1}^{\infty}\left(1-p\right)^{l-1}\label{eq:useLogBnd}\\
 & \leq1+\frac{1}{2p}+\frac{151}{100p\sqrt{e}},\label{eq:sqrtEbnd}
\end{align}
where (\ref{eq:useGHProp-1}) uses Theorem \ref{thm:Grubel-Hitczenko-Geom};
(\ref{eq:useFubini-1}) uses Fubini's theorem; (\ref{eq:useMinBnd})
uses 
\[
1-\left(1-x\right)^{\frac{1}{1-x}}\leq\min\left(1.51x,\:1\right),\textrm{ for }x\coloneqq\left(1-p\right)^{l-1}\in\left(0,1\right);
\]
and (\ref{eq:useLogBnd}) and (\ref{eq:sqrtEbnd}) use $\frac{\log\left(\nicefrac{100}{151}\right)}{\log\left(1-p\right)}\leq\frac{1}{2p}$
and $\left(1-p\right)^{\left\lceil \nicefrac{1}{2p}\right\rceil }\leq\frac{1}{\sqrt{e}}$,
for $0<p<1$. With $\frac{151}{100\sqrt{e}}\approx0.9159$, $\mathbb{E}L_{n}\leq1+\frac{1.5}{p}$.
Similarly, $\mathbb{E}L_{n}=\sum_{l=1}^{\infty}\left(1-\Pr\left(L_{n}\leq l\right)\right)$
\begin{align}
 & \geq\sum_{l=1}^{\infty}\left(1-\prod_{i=1}^{n-1}\left(1-\left(1-p\right)^{\left(l+1\right)i}\right)\right)\label{eq:useGHProp-2}\\
 & =\sum_{l=1}^{\infty}\left(1-\exp\left(\sum_{i=1}^{n-1}\log\left(1-\left(1-p\right)^{\left(l+1\right)i}\right)\right)\right)\\
 & =\sum_{l=1}^{\infty}\left(1-\exp\left(-\sum_{i=1}^{n-1}\sum_{j=1}^{\infty}\frac{\left(1-p\right)^{\left(l+1\right)ij}}{j}\right)\right)\\
 & =\sum_{l=1}^{\infty}\left(1-\exp\left(-\sum_{j=1}^{\infty}\frac{\left(1-p\right)^{\left(l+1\right)j}-\left(1-p\right)^{\left(l+1\right)jn}}{j\left(1-\left(1-p\right)^{\left(l-1\right)j}\right)}\right)\right)\\
 & \geq\sum_{l=1}^{\infty}\left(1-\exp\left(-\sum_{j=1}^{\infty}\frac{\left(1-p\right)^{\left(l+1\right)j}}{j}+\sum_{j=1}^{\infty}\frac{\left(1-p\right)^{\left(l+1\right)jn}}{j}\right)\right)\\
 & =\sum_{l=1}^{\infty}\left(1-\frac{1-\left(1-p\right)^{l+1}}{1-\left(1-p\right)^{\left(l+1\right)n}}\right)=\sum_{l=1}^{\infty}\left(\frac{\left(1-p\right)^{l+1}-\left(1-p\right)^{\left(l+1\right)n}}{1-\left(1-p\right)^{\left(l+1\right)n}}\right)\\
 & \geq\sum_{l=1}^{\infty}\left(1-p\right)^{l+1}-\sum_{l=1}^{\infty}\left(1-p\right)^{\left(l+1\right)n}=\frac{\left(1-p\right)^{2}}{p}-\frac{\left(1-p\right)^{2n}}{1-\left(1-p\right)^{n}},
\end{align}
where (\ref{eq:useGHProp-2}) uses Theorem \ref{thm:Grubel-Hitczenko-Geom}.
We therefore have upper and lower bounds for $\mathbb{E}L_{n}$. Bounds
for $\mathbb{E}L_{n}^{2}=2\sum_{l=1}^{\infty}l\left(1-\Pr\left(L_{n}\leq l\right)\right)$
use the same argument and lead to bounds for $\mathrm{Var}\left(L_{n}\right)=\mathbb{E}L_{n}^{2}-\left(\mathbb{E}L_{n}\right)^{2}$.
Limiting results then follow, using Lemma \ref{lem:AsympRat} for
the $p\rightarrow0^{+}$ setting, which completes the proof.
\end{proof}
\ghlimlfone*
\begin{proof}
Without loss of generality (if $\kappa_{\lambda,\rho}\in\left.\mathbb{R}\right\backslash \mathbb{Z})$,
we consider an exponential histogram with bins $\left\{ \left(\rho^{\kappa_{\lambda,\rho}+k-1},\rho^{\kappa_{\lambda,\rho}+k}\right]\right\} _{k=1}^{\infty}$,
so that 
\begin{align*}
p_{k} & \coloneqq\Pr\left(Y_{1}=k\right)=\Pr\left(\rho^{\kappa_{\lambda,\rho}+k-1}<X_{1}\leq\rho^{\kappa_{\lambda,\rho}+k}\right)=\rho^{-\frac{\rho^{k}-\rho}{\rho-1}}-\rho^{-\frac{\rho^{k+1}-\rho}{\rho-1}}\\
q_{k} & \coloneqq\sum_{j=k}^{\infty}p_{j}=\Pr\left(Y_{1}\geq k\right)=\Pr\left(X_{1}\geq\rho^{\kappa_{\lambda,\rho}+k-1}\right)=\rho^{-\frac{\rho^{k}-\rho}{\rho-1}}.
\end{align*}
Now, because $\rho>1$, we see that
\[
\frac{\nicefrac{q_{k+2}}{q_{k+1}}}{\nicefrac{q_{k+1}}{q_{k}}}=\frac{q_{k}q_{k+2}}{q_{k+1}^{2}}=\frac{1}{\rho^{\left(\rho-1\right)\rho^{k}}}\longrightarrow0
\]
as $k\rightarrow\infty$, so that the ratio test and Theorem \ref{thm:Grubel-Hitczenko}
part \ref{enu:Grubel-Hitczenko-1} give the result.
\end{proof}


\begin{thebibliography}{Hartmann \& Schlossnagle(2020)}
\bibitem[Agarwal et al(2013)]{ACHPWY13}Agarwal PK, G Cormode, Z Huang,
JM Phillips, Z Wei, K Yi (2013) ``Mergeable Summaries.'' \emph{ACM
Transactions on Database Systems}, 38:4:26, pages 1--28.

\bibitem[Aldor-Noiman et al(2013)]{ABBRS13}Aldor-Noiman S, Brown
LD, Buja A, Rolke W, \& Stine RA (2013) ``The power to see: a new
graphical test of normality.'' \emph{The American Statistician},
67: 4, pages 249--260.

\bibitem[Bogachev et al(2008)]{BGY08}Bogachev LV, AV Gnedin, \& YV
Yakubovich (2008) ``On the variance of the number of occupied boxes.''
\emph{Advances in Applied Mathematics}, 40: pages 401--432.

\bibitem[Cerone(2007)]{C07}Cerone P (2007) ``Special functions:
Approximations and bounds.'' \emph{Appl Anal Discrete Math}, 1: 1,
pages 72--91.

\bibitem[Chambers et al(2006)]{CJLVW06}Chambers JM, DA James, D Lambert,
S Vander Wiel (2006) ``Monitoring networked applications with incremental
quantile estimation.'' \emph{Statistical Science}, 21(4): pages 463--475.

\bibitem[Chatzigeorgiou(2013)]{C13}Chatzigeorgiou I (2013) ``Bounds
on the lambert function and their application to the outage analysis
of user cooperation.'' IEEE Comm Letters, 17: 8, pages 1505--1508.

\bibitem[Cormen et al(2001)]{CLRS01}Cormen TH, CE Leiserson, RL Rivest,
C Stein (2001) \emph{Introduction to Algorithms, Second Edition}.
MIT Press, Cambridge, MA.

\bibitem[Cormode et al(2021)]{CKLTV21}Cormode G, Z Karnin, E Liberty,
J Thaler, \& P Veselý (2021) ``Relative error streaming quantiles.''
\emph{PODS'21: Proceedings of the 40th ACM SIGMOD-SIGACT-SIGAI Symposium
on Principles of Database Systems}, pages 96--108.

\bibitem[Cormode \& Yi(2020)]{CY20}Cormode G \& K Yi (2020) \emph{Small
Summaries for Big Data}. Cambridge UP, Cambridge, UK.

\bibitem[David \& Nagaraja(2003)]{DN03}David HA \& HN Nagaraja (2003)
Order Statistics, 3rd Edition. John Wiley \& Sons, Hoboken, NJ.

\bibitem[Decrouez et al(2018)]{DGP18}Decrouez G, M Grabchak, Q Paris
(2018) ``Finite sample properties of the mean occupancy counts and
probabilities.'' \emph{Bernoulli}, 24: 3, pages 1910--1941.

\bibitem[Dunning \& Ertl(2019)]{DE19}Dunning T \& O Ertl (2019) ``Computing
Extremely Accurate Quantiles Using t-Digests.'' See \href{https://arxiv.org/abs/1902.04023}{https://arxiv.org/abs/1902.04023}.

\bibitem[Durrett(2005)]{D05}Durrett R (2005) \emph{Probability: Theory
and Examples, 3rd edition}. Brooks/Cole---Thomas Learning, Belmont,
CA.

\bibitem[Dvoretzky et al(1956)]{DKW56}Dvoretzky A, J Kiefer, \& J
Wolfowitz (1956), ``Asymptotic minimax character of the sample distribution
function and of the classical multinomial estimator.'' Annals of
Mathematical Statistics, 27: 3, pages 642--669.

\bibitem[Foss et al(2013)]{FKZ13}Foss S, D Korshunov, \& S Zachary
(2013) \emph{An Introduction to Heavy-Tailed and Subexponential Distributions,
Second Edition}. Springer, New York, NY.

\bibitem[Gnedin et al(2007)]{GHP07}Gnedin A, B Hansen, J Pitman (2007)
``Notes on the occupancy problem with infinitely many boxes: general
asymptotics and power laws.'' \emph{Probab. Surveys}, 4, pages 146--171.

\bibitem[Greenwald \& Khanna(2001)]{GK01}Greenwald M \& S Khanna
(2001) ``Space-efficient online computation of quantile summaries.''
\emph{ACM SIGMOD Record}, 30: 2, pages 58--66.

\bibitem[Grübel \& Hitczenko(2009)]{GH09}Grübel R \& Hitczenko P
(2009) ``Gaps in discrete random samples.'' \emph{Journal of Applied
Probability}, 46: 4, pages 1038-1051.

\bibitem[Haan \& Ferreira(2006)]{HF06}de Haan L \& A Ferreira (2006)
\emph{Extreme Value Theory: An Introduction}. Springer, New York,
NY.

\bibitem[Hartmann \& Schlossnagle(2020)]{HS20}Hartmann H \& T Schlossnagle
(2020) ``Circllhist: the log-linear histograms for IT infrastructure
monitoring.'' See \href{https://arxiv.org/abs/2001.06561}{https://arxiv.org/abs/2001.06561}.

\bibitem[Johnson \& Killeen(1983)]{JK83}Johnson B McK \& T Killeen
(1983) ``An explicit formula for the C.D.F. of the L1 norm of the
Brownian bridge.'' \emph{Ann. Probab.}, 11: 3, pages 807-808.

\bibitem[Karlin(1967)]{K67}Karlin S (1967) ``Central limit theorems
for certain infinite urn schemes.'' \emph{J of Math \& Mech}, 17:
4, pages 373--401.

\bibitem[Karnin et al(2016)]{KLL16}Karnin Z, K Lang, E Liberty (2016)
``Optimal quantile approximation in streams.'' \emph{Proc IEEE FOCS
'16}, pages 71-78.

\bibitem[Massart(1990)]{M90}Massart P (1990) ``The tight constant
in the Dvoretzky--Kiefer--Wolfowitz inequality.'' \emph{Annals
of Probability}, 18: 3, pages 1269--1283.

\bibitem[Masson et al(2019)]{MRL19}Masson C, JE Rim, HK Lee (2019)
``DDSketch: a fast and fully-mergeable quantile sketch with relative-error
guarantees.'' \emph{Proc VLDB Endowment}, 12: 12.

\bibitem[Munro \& Paterson(1980)]{MP80}Munro JI \& MS Paterson (1980)
``Selecting and sorting with limited storage.'' \emph{Theoretical
Computer Science}, 12, pages 315--323.

\bibitem[Rényi(1953)]{R53}Rényi A (1953) ``On the theory of order
statistics.'' \emph{Acta Mathematica Academiae Scientiarum Hungaricae},
4, pages 191--231.

\bibitem[Resnick(1987)]{R87}Resnick SI (1987) \emph{Extreme Values,
Regular Variation, and Point Processes}. Springer, New York, NY.

\bibitem[Rosenkrantz(2000)]{R00}Rosenkrantz WA (2000) ``Confidence
bands for quantile functions: a parametric and graphic alternative
for testing goodness of fit.'' \emph{The American Statistician},
54: 3, pages 185--190.

\bibitem[Schmid \& Trede(1996)]{SM96}Schmid F \& M Trede (1996) ``An
L1-variant of the Cramer-von Mises test.'' \emph{Statistics \& Probability
Letters}, 26: 1, pages 91--96.

\bibitem[Shrivastava et al(2004)]{SBAS04}Shrivastava N, C Buragohain,
D Agrawal, S Suri (2004) ``Medians and beyond: new aggregation techniques
for sensor networks.'' \emph{SenSys '04: Proceedings of the 2nd International
Conference on Embedded Networked Sensor Systems}, pages 239--249.

\bibitem[Tibshirani(2008)]{T08}Tibshirani RJ (2008) ``Fast computation
of the median by successive binning.'' See \href{https://www.stat.cmu.edu/~ryantibs/median/}{https://www.stat.cmu.edu/$\sim$ryantibs/median/}.
\end{thebibliography}
\end{document}